\date{July 3, 2019}
\newcommand{\R}{\mathbb{R}} 
\newcommand{\Z}{\mathbb{Z}} 
\newcommand{\N}{\mathbb{N}} 
\newcommand*\diff{\mathop{}\!\mathrm{d}} 
\newcommand{\SL}{\Sigma_\lambda}
\newtheorem{theorem}{Theorem}
\newtheorem{corollary}[theorem]{Corollary}
\newtheorem{lemma}[theorem]{Lemma}
\newtheorem{remarknumbered}[theorem]{Remark}
\newtheorem*{definition*}{Definition}
\begin{document}

\title[Singular solutions to a critical biharmonic equation]{Singular solutions to a semilinear biharmonic equation with a general critical nonlinearity}

\author{Rupert L. Frank}
\address[Rupert L. Frank]{Mathematisches Institut, Ludwig-Maximilans Universit\"at M\"unchen, Theresienstr. 39, 80333 M\"unchen, Germany, and Mathematics 253-37, Caltech, Pasa\-de\-na, CA 91125, USA}
\email{r.frank@lmu.de, rlfrank@caltech.edu}

\author{Tobias König}
\address[Tobias König]{Mathematisches Institut, Ludwig-Maximilans Universit\"at M\"unchen, Theresienstr. 39, 80333 M\"unchen, Germany}
\email{tkoenig@math.lmu.de}

\dedicatory{To V. Maz'ya on the occasion of his 80th birthday}

\begin{abstract}
We consider positive solutions $u$ of the semilinear biharmonic equation $\Delta^2 u = |x|^{-\frac{n+4}{2}} g(|x|^\frac{n-4}{2} u)$ in $\R^n \setminus \{0\}$ with non-removable singularities at the origin. Under natural assumptions on the nonlinearity $g$, we show that $|x|^\frac{n-4}{2} u$ is a periodic function of $\ln |x|$ and we classify all such solutions.
\end{abstract}

\thanks{\copyright\, 2019 by the authors. This paper may be
reproduced, in its entirety, for non-commercial purposes.\\
Partial support through US National Science Foundation grant DMS-1363432 (R.L.F.) and Studienstiftung des deutschen Volkes (T.K.) is acknowledged.}

\maketitle

\section{Introduction and main results}

We are interested in positive solutions $u$ of the semilinear biharmonic equation 
\begin{equation}
\label{differential eq}
\Delta^2 u = \frac{1}{|x|^{\frac{n+4}{2}}}\, g(|x|^\frac{n-4}{2} u)  \qquad\text{in}\ \R^n \setminus \{0\}
\end{equation}
which may have singularities at the origin. We always assume $n\geq 5$. The nonlinearity $g \in C^1(\R_+)$ is taken to satisfy a set of growth conditions which will be specified below. The seemingly strange way the right hand side of equation \eqref{differential eq} is written ensures invariance of the equation under rotations, dilations and inversion in the unit sphere. The form becomes very natural later when we pass to logarithmic radial coordinates. We will always interpret \eqref{differential eq} in the weak sense, that is, we assume $u \in H^2_{\rm loc}(\R^n \setminus \{0\})$, $g(u)\in L^1_{\rm loc}(\R^n\setminus\{0\})$ and 
\[ \int_{\R^n} \Delta u\;  \Delta \varphi =   \int_{\R^n} g(u) \varphi  \qquad \text{ for all } \varphi \in C^\infty_0 (\R^n \setminus \{0\}) \,. \]

Our goal is to classify all positive solutions of \eqref{differential eq}. Under some natural assumptions on $g$ we will be able to show that, if $u$ has a non-removable singularity at $0$, then $|x|^\frac{n-4}{2} u$ is a periodic function of $\ln |x|$ and, up to dilations, all such functions are uniquely parametrized by the minimal value of this periodic function.

The simplest example of a nonlinearity to which our results apply is $g(t)=t^\frac{n+4}{n-4}$, where the equation becomes
\begin{equation}
\label{differential eq model}
\Delta^2 u = u^\frac{n+4}{n-4}  \qquad\text{in}\ \R^n \setminus \{0\} \,.
\end{equation}
Note that the exponent in this equation is critical in the sense of Sobolev's embedding theorem. We have treated equation \eqref{differential eq model} in our previous paper \cite{FrKo} and our goal now is to extend these results to a more general class of equations. In addition, we will be able to simplify some parts of the argument in \cite{FrKo}.

Other equations that we can treat in this paper are, for instance,
\begin{equation}
\label{differential eq model2}
\Delta^2 u = |x|^{-\frac{n+4}{2}+q\frac{n-4}{2}}\, u^q  \qquad\text{in}\ \R^n \setminus \{0\} \,.
\end{equation}
with $1<q\leq \frac{n+4}{n-4}$, as well as equations whose right hand side is equal to a finite sum, with positive coefficients, of terms as on the right side of \eqref{differential eq model2} with different $q$'s. We can also allow for a `Hardy--Rellich' term on the right side, that is,
\begin{equation}
\label{differential eq model3}
\Delta^2 u = \beta |x|^{-4} u + |x|^{-\frac{n+4}{2}+q\frac{n-4}{2}}\, u^q  \qquad\text{in}\ \R^n \setminus \{0\}
\end{equation}
with $1<q\leq\frac{n+4}{n-4}$, provided the constant satisfies $0<\beta<\frac{n^2(n-4)^2}{16}$. Note that $\frac{n^2(n-4)^2}{16}$ is the sharp constant in the Hardy--Rellich inequality. 

The precise assumptions on the nonlinearity $g$ are as follows,
 \begin{equation}
\label{conditions on g}
\begin{cases}
g \in C^1(\R_+), \; g > 0, \; \lim_{t \to 0} g(t) = 0, \\
\frac{g(t)}{t} < g'(t)  \leq \frac{n+4}{n-4} \frac{g(t)}{t} & \text{ for all } t > 0, \\ 
\beta := \lim_{t \to 0} g'(t) < \frac{n^2(n-4)^2}{16}, \\
g(t) \geq c t^q & \text{ for all } t \geq 1 ,  \quad \text{ for some } q > 1,\, c > 0.
\end{cases}  
\end{equation}
It is part of the assumption on $g$ that the two limits appearing in \eqref{conditions on g} exist. 

Note that equations \eqref{differential eq model2} and \eqref{differential eq model3} correspond to the choices $g(t)=t^q$ and $g(t) = \beta t + t^q$, respectively. These functions clearly satisfy all the requirements in \eqref{conditions on g}.

Let us discuss assumptions \eqref{conditions on g} for general functions $g$. The inequality $g'(t)  \leq \frac{n+4}{n-4} \frac{g(t)}{t}$ means that the non-linearity is subcritical or critical in the sense of Sobolev's embedding theorem, while the inequality $g(t) \geq c t^q$ is a superlinearity assumption for large values of the argument of $g$.

We proceed in two steps, the first one being a proof that positive solutions of \eqref{differential eq} are radial and the second one being a careful analysis of the resulting ordinary differential equation.

The following is our first main result.

\begin{theorem}[Radial symmetry]
\label{theorem hardy solutions are radial}
Suppose that $g$ satisfies \eqref{conditions on g} and let $u \in H^2_{\rm loc}(\R^n \setminus \{0\})$ be a solution of \eqref{differential eq} with $u >0$. 

Assume that either $u\not\in L^\frac{2n}{n-4}(\R^n)$ or that the inequality $g'(t)  \leq \frac{n+4}{n-4} \frac{g(t)}{t}$ in \eqref{conditions on g} is strict for a.e. $t>0$.

Then $u$ is radially symmetric-decreasing with respect to $0$.
\end{theorem}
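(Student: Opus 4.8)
The plan is to use the moving plane method, adapted to the biharmonic operator. The key difficulty compared to the second-order case is the lack of a maximum principle for $\Delta^2$; I would circumvent this by rewriting the fourth-order equation as a second-order system and exploiting the structure of the Green's function, or — following the approach that works best for singular problems on $\R^n \setminus \{0\}$ — by working with the Kelvin transform and comparing $u$ with its reflections directly at the level of the integral equation.

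First I would record that, after the substitution to logarithmic/Emden–Fowler-type variables suggested in the introduction, or equivalently after noting the conformal invariances mentioned there, a positive solution of \eqref{differential eq} satisfies an integral identity of the form $u(x) = \int_{\R^n} \frac{c_n}{|x-y|^{n-4}} |y|^{-\frac{n+4}{2}} g(|y|^{\frac{n-4}{2}}u(y))\,\diff y$ plus a possibly harmonic/biharmonic correction; here the hypotheses $u \notin L^{\frac{2n}{n-4}}$ (ruling out the case where the singularity is removable and the problem reduces to the known entire classification) or the strictness of the Sobolev-subcriticality inequality are what let me pin down the correction term and guarantee the singularity is genuinely non-removable, so that $u$ truly lives on $\R^n\setminus\{0\}$ with the stated decay. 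The superlinearity bound $g(t)\ge ct^q$ for $t\ge 1$ together with $g(t)/t < g'(t)$ (monotonicity of $g(t)/t$) ensures the nonlinearity has the sign and monotonicity needed to run moving planes.

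Next I would set up the moving plane apparatus. For a unit vector $e$ and $\lambda \in \R$ let $\Sigma_\lambda = \{x : x\cdot e > \lambda\}$ and let $x^\lambda$ denote reflection across $\{x\cdot e = \lambda\}$, with $u_\lambda(x) = u(x^\lambda)$; using $\SL$ as defined in the preamble. Define $w_\lambda = u - u_\lambda$ on $\SL$. The goal is to show $w_\lambda \le 0$ on $\SL$ for a suitable range of $\lambda$, start the plane at infinity (for $\lambda$ large this holds because of the decay forced by $u\notin L^{\frac{2n}{n-4}}$ being excluded, i.e.\ the integrability that does hold), and then push $\lambda$ down to $0$ and conclude $u$ is symmetric about every hyperplane through the origin, hence radial. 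For this I would use the integral-equation form: writing the difference of the representation formulas for $u(x)$ and $u(x^\lambda)$, one gets $w_\lambda(x) = \int_{\SL} \big(G(x,y) - G(x^\lambda,y)\big)\big(f(y) - f(y^\lambda)\big)\,\diff y$ with $G$ the $(-\Delta)^2$ Green kernel on $\R^n$ and $f(y) = |y|^{-\frac{n+4}{2}}g(|y|^{\frac{n-4}{2}}u(y))$; the crucial positivity $G(x,y) - G(x^\lambda, y) > 0$ for $x,y\in\SL$ holds for the polyharmonic kernel on all of $\R^n$, and the bound $f(y)-f(y^\lambda)$ controlled by $w_\lambda$ on $\{w_\lambda>0\}$ comes from the mean value theorem applied to $g$ together with $g'(t) \le \frac{n+4}{n-4}\frac{g(t)}{t}$ and $|y|\le |y^\lambda|$ on $\SL$ once $\lambda\ge 0$. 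A contraction/measure estimate on the set $\{w_\lambda > 0\}$, exactly as in the Chen–Li–Ou scheme, then closes the argument, and the strict monotonicity in $|x|$ follows from examining the limiting hyperplane.

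The main obstacle I anticipate is \emph{justifying the integral representation with the correct (trivial) correction term}, i.e.\ showing that a positive $H^2_{\rm loc}$ solution on the punctured space with the assumed non-removability genuinely satisfies $u = G * f$ with no added polyharmonic function — equivalently, controlling the behaviour of $u$ both at the origin and at infinity well enough (positivity of $\Delta u$ or of $-\Delta u$ is the sort of auxiliary fact one needs, and is itself nontrivial for $\Delta^2$). This is where the two alternative hypotheses in the theorem statement do their work, and I expect the bulk of the technical effort to go into this step; once the clean integral equation is in hand, the moving-plane/Kelvin argument is essentially standard.
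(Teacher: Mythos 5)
There is a genuine gap, and it sits exactly where you flagged it: the global integral representation $u=G*f$ on $\R^n$ with the Riesz-type kernel $|x-y|^{-(n-4)}$ does \emph{not} hold for the solutions this theorem is supposed to cover, and no choice of hypotheses can make the ``biharmonic correction'' vanish. For the constant solution $u(x)=a_0|x|^{-(n-4)/2}$ (and for the solutions that are periodic in $\ln|x|$), the right-hand side is $f(y)\sim |y|^{-(n+4)/2}$, and $\int_{\R^n}|x-y|^{-(n-4)}|y|^{-(n+4)/2}\,\diff y$ diverges at infinity. You also have the role of the hypothesis $u\not\in L^{\frac{2n}{n-4}}(\R^n)$ backwards: it does not make the representation cleaner; it is used only at the very end, to rule out the degenerate possibility that the moving plane stops at some $\lambda_0<0$ with $u$ symmetric about a hyperplane \emph{not} through the origin (which actually happens for the entire $L^{\frac{2n}{n-4}}$ bubbles of the critical equation). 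So the Chen--Li--Ou integral-equation machinery cannot even be set up here, and the step you called ``essentially standard once the clean integral equation is in hand'' never starts.

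The paper therefore does something structurally different. It never writes $u$ as a potential of $f$. Instead it first applies a Kelvin-type transform $u_z^*(x)=|x|^{-(n-4)}u(x/|x|^2+z)$ centered at a point $z\neq 0$; this produces a new equation \eqref{equation for kelvin transform u*z} with a weight $k_z(x)\sim|x|^{-2}$ that decays at infinity and introduces a \emph{second} singular point $-z^*$. That decay — not any $L^{\frac{2n}{n-4}}$ assumption — is what starts the plane at $\lambda=-\infty$. The moving planes are then run at the level of the weak PDE (Lemmas \ref{lemma v integrable}--\ref{lemma 4.4}), using carefully regularized test functions that are harmonic near the two singular points, and the fourth-order operator is never touched directly: the key estimates \eqref{inequality w leq Delta w} and \eqref{inequality Delta w leq w} are obtained by factoring through the \emph{second}-order Newtonian potential $\psi(x)=c_n\int_{\SL}(|x-y|^{2-n}-|x-y^\lambda|^{2-n})f(y)\,\diff y$ on the half-space, whose positivity is elementary. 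This sidesteps both the biharmonic maximum-principle problem and the non-existence of a global fourth-order Green's representation. Finally, radiality is obtained by letting $z\to0$ along every direction and then applying the result once more to $u^*$, so that symmetry about every hyperplane through $0$ is recovered on both the near and far sides of the inversion — this two-step argument is another ingredient your outline omits.
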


When $u\in L^\frac{2n}{n-4}(\R^n)$ and the inequality $g'(t)  \leq \frac{n+4}{n-4} \frac{g(t)}{t}$  in \eqref{conditions on g} is not strict for a.e. $t>0$, then the conclusion of the theorem need not hold. Indeed, it is well-known that equation \eqref{differential eq model}, which is translation invariant, has a solution which is strictly radially symmetric-decreasing with respect to an \emph{arbitrary} given point. (This follows from the existence of an optimizer in the Sobolev inequality for the Bilaplacian, proved in dual form in \cite{Lieb}; for a uniqueness results for the corresponding Euler--Lagrange equation see also \cite{Swanson}.) This shows that some extra condition is needed to conclude radial symmetry with respect to the origin, although the condition given in the theorem can probably be relaxed.

\begin{remarknumbered}\label{remark strict}
We will show in Section \ref{section ode} that, in fact, $\frac{\partial u}{\partial |x|}<0$ for all $x\in\R^n\setminus\{0\}$, using ODE methods. 
\end{remarknumbered}

We now proceed to the second step of our argument. According to Theorem \ref{theorem classification ODE} we can write equation \eqref{differential eq} as an ordinary differential equation. It becomes particularly simple in logarithmic coordinates. That is, we make the so-called Emden--Fowler change of variables
$$
u(x) = |x|^{-\frac{n-4}{2}} v(\ln |x|) \,.
$$
After a lengthy, but straightforward computation, we can write \eqref{differential eq} as 
\begin{equation}
\label{eq ODE intro}
v^{(4)}-  A v'' + B v = g(v) 
\qquad\text{in}\ \R
\end{equation}
with
$$
A = \frac{n(n-4)+8}{2}
\qquad\text{and}\qquad
B = \frac{n^2(n-4)^2}{16} \,.
$$
A similar change of variables is used in \cite{Lieb} in a dual form. For fourth order ODEs it appears also in \cite{GazzolaGrunau}.

The following is our second main result. Recall from \eqref{conditions on g} that by definition $\beta = \lim_{t \to 0} g'(t)$.

\begin{theorem} [Classification of ODE solutions]
\label{theorem classification ODE}
Suppose that $g$ satisfies \eqref{conditions on g}.

Then any positive solution $v\in C^4(\R)$ of \eqref{eq ODE intro} is either constant, or homoclinic to zero, or periodic. These solutions can be classified, up to translations, by their minimal value in the following sense.
\begin{enumerate}
\item[(i)] There is a unique $a_0 > 0$ such that $g(a_0) = B a_0$. Moreover, if $v \in C^4(\R)$ is a positive solution of \eqref{eq ODE intro}, then $\inf_\R v \leq a_0$, with equality if and only if $v$ is a non-zero constant.
\item[(ii)] If $a \in \left(0, a_0 \right)$, then there is a unique (up to translations) bounded solution $v\in C^4(\R)$ of \eqref{eq ODE intro} with $\inf_\R v = a$. This solution is periodic, has a unique local maximum and minimum per period and is symmetric with respect to its local extrema. 
\item[(iii)] There is a unique (up to translations) positive solution $v \in C^4(\R)$ of \eqref{eq ODE intro} with $\inf_\R v=0$. This solution is symmetric-decreasing and satisfies $v(t)\leq C_\epsilon e^{-(\sqrt\mu-\epsilon)|t|}$ for any $\epsilon>0$, where $\mu = \frac12 (A - \sqrt{A^2 - 4(B-\beta)})$. Moreover, if
\begin{equation}
\label{eq:gnearzero}
|g(t)-\beta t| \leq C t^r \quad\text{for all}\ t\leq 1\,,\qquad \text{for some}\ r>1,\, C<\infty \,,
\end{equation}
then $\lim_{|t|\to\infty} e^{\sqrt\mu |t|} v(t)$ exists and is finite. When $g(t)\geq \beta t$ for all $t>0$, then the limit is positive.  
\end{enumerate}
\end{theorem}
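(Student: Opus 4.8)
The plan is to exploit the variational/Hamiltonian structure of \eqref{eq ODE intro} together with a phase-space analysis in which the particular values of $A$ and $B$ are crucial. First I would note that multiplying \eqref{eq ODE intro} by $v'$ and integrating once shows that
\[
H(v):=v'v'''-\tfrac12(v'')^2-\tfrac A2(v')^2+\tfrac B2v^2-G(v),\qquad G(t):=\int_0^t g(s)\,\diff s,
\]
is constant along every solution. A positive constant $v\equiv a$ solves \eqref{eq ODE intro} iff $g(a)=Ba$; since $t\mapsto g(t)/t$ is strictly increasing (because $g(t)/t<g'(t)$ in \eqref{conditions on g}), with $\lim_{t\to0}g(t)/t=\beta<B$ and $g(t)/t\ge ct^{q-1}\to\infty$, there is a unique such $a_0$, giving the first assertion of (i). I would record that $H=0$ at $v\equiv0$ whereas $H=\tfrac B2a_0^2-G(a_0)>0$ at $v\equiv a_0$, the strict inequality because $g(t)<Bt$ for $0<t<a_0$. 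Next I would establish a priori bounds: the superlinearity $g(t)\ge ct^q$, $q>1$, forces every positive $v\in C^4(\R)$ to be bounded (an unbounded positive solution cannot exist on all of $\R$), whence $v',v'',v'''$ are bounded as well and the orbit $(v,v',v'',v''')$ is relatively compact in $\R^4$.

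The core is the trichotomy: every bounded positive solution is constant, homoclinic to $0$, or periodic. The decisive structural fact is $A^2-4B=4(n-2)^2>0$, which factors $\partial^4-A\partial^2+B=\bigl(\partial^2-\tfrac{n^2}{4}\bigr)\bigl(\partial^2-\tfrac{(n-4)^2}{4}\bigr)$ with positive frequencies; for instance $w:=v''-\tfrac{(n-4)^2}{4}v$ satisfies $-w''+\tfrac{n^2}{4}w=-g(v)<0$, hence $w\le0$, i.e.\ $v''\le\tfrac{(n-4)^2}{4}v$, and iterating such second-order comparisons yields pointwise control of $v$ and its derivatives. Linearising at the equilibria one checks that $0$ is hyperbolic with \emph{four real} eigenvalues $\pm\sqrt\mu$, $\pm\sqrt\nu$ with $0<\mu<\nu$ --- here enters precisely the hypothesis $\beta<B$ --- while $a_0$ is a saddle--center, since $g'(a_0)>g(a_0)/a_0=B$. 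Using the absence of oscillation near $0$, the conserved $H$ (which distinguishes the two equilibria and so rules out orbits connecting them), the reversibility $t\mapsto-t$, and a reflection argument at the critical points of $v$ based on the iterated maximum principle coming from the factorisation, one shows that a non-constant bounded positive $v$ is symmetric about each of its critical points with one maximum and one minimum per period, hence is periodic or homoclinic to $0$; this also excludes homoclinics to $a_0$ and multibump or chaotic orbits. Integrating \eqref{eq ODE intro} against $1$ over a period gives $B\int_0^T v=\int_0^T g(v)$, which with the monotonicity of $g(t)/t$ forces $\inf v<a_0$ unless $v$ is constant, completing (i).

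For the classification by the minimum, at the saddle--center $a_0$ the reversible Lyapunov center theorem yields a one-parameter family of reversible (translation-symmetric) periodic solutions $v_a$ with $\inf v_a=a$ for $a$ slightly below $a_0$, each with a single local maximum and minimum per period; the a priori bounds of the first step let one continue this branch to all $a\in(0,a_0)$, and as $a\to0^+$ its period tends to $\infty$ while, after translation, $v_a$ converges locally uniformly to a bounded positive solution with infimum $0$, which by the trichotomy is homoclinic to $0$ and, being reversible with a single maximum, symmetric-decreasing --- this is the existence half of (ii)--(iii). The main obstacle of the whole argument is \emph{uniqueness}: normalising $\inf v=a=v(0)$, reversibility forces $v'(0)=v'''(0)=0$, so a solution is determined by the single shooting parameter $b:=v''(0)\ge0$, and one must show that exactly one value $b=b(a)$ yields a globally bounded orbit. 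This is proved by a monotonicity argument for the shooting map, made available by the iterated maximum principle from the factorisation of $\partial^4-A\partial^2+B$ together with the strict monotonicity of $g(t)/t$; establishing this monotonicity (and the transversality needed to continue the branch without degeneracy) is the technically hardest point.

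Finally, for the decay asymptotics in (iii): near $0$, equation \eqref{eq ODE intro} is a small perturbation of the linear equation $v^{(4)}-Av''+(B-\beta)v=0$, whose solutions bounded at $+\infty$ are spanned by $e^{-\sqrt\mu t}$ and $e^{-\sqrt\nu t}$ with $\mu<\nu$, so the stable-manifold theorem gives $v(t)\le C_\epsilon e^{-(\sqrt\mu-\epsilon)|t|}$ for every $\epsilon>0$. Under the sharper bound \eqref{eq:gnearzero}, a variation-of-constants fixed-point argument on the slow asymptotics upgrades this to the existence of $\lim_{|t|\to\infty}e^{\sqrt\mu|t|}v(t)\in[0,\infty)$, and when $g(t)\ge\beta t$ for all $t>0$ a comparison with the pure linear slow mode shows this limit is strictly positive. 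Together with the symmetry established above, this gives (iii) and completes the proof.
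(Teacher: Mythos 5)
Your proposal captures the right global picture---the Hamiltonian, the factorization $\partial^4-A\partial^2+B=(\partial^2-\lambda)(\partial^2-\mu)$ with positive real roots, the linearization at $0$ and at $a_0$, existence of periodic orbits converging to a homoclinic, and the stable-manifold asymptotics---but it has a genuine gap at precisely the point you flag as ``the technically hardest.'' You reduce the uniqueness question to a one-parameter shooting map $b=v''(0)\mapsto$ (behavior of the orbit), given $v(0)=a$, $v'(0)=v'''(0)=0$, and assert a monotonicity of this map without proving it; likewise your symmetry-about-critical-points step relies on a ``reflection argument based on the iterated maximum principle'' but does not explain why $v'(t_0)=0$ forces $v'''(t_0)=0$ as well. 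Both are instances of the same unproved rigidity. The paper's key tool, absent from your proposal, is a \emph{comparison lemma} (Lemma \ref{lemma comparison}): if $v,w$ are nonnegative entire solutions of \eqref{eq ODE intro} with $v(0)\geq w(0)$, $v'(0)\geq w'(0)$, $v''(0)-\mu v(0)\geq w''(0)-\mu w(0)$, and $v'''(0)-\mu v'(0)\geq w'''(0)-\mu w'(0)$, then $v\equiv w$. Its proof combines the factorization with the blow-up alternative (Lemma \ref{lemma wei blow up}); from it one deduces (Corollary \ref{corollary symmetry}) that a nonnegative entire solution is determined by $v(0)$ and $v'(0)$ alone. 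Once this rigidity is available, there is no shooting parameter left to monotonize: given $\inf v=a$ attained at $0$, the two data $v(0)=a$, $v'(0)=0$ already pin down $v$, and the reflection $t\mapsto v(-t)$ agrees with $v$ in these two data, giving the symmetry for free. The improvement over \cite{FrKo} that the paper stresses is exactly that this comparison works for \emph{nonnegative} rather than \emph{bounded} solutions, so the a~priori boundedness you invoke in your first step is not needed.

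Two secondary points. Your existence route via the reversible Lyapunov center theorem at the saddle--center $a_0$, followed by continuation of the branch down to $a\to0^+$, is a legitimate alternative to the paper's shooting construction, but the continuation step (no folds, no loss of positivity, non-degeneracy along the branch) is itself nontrivial and left unargued; the paper sidesteps this by using the comparison lemma to pass the normal form $(v(0),v'(0))$ through the whole range. Finally, the paper's uniqueness of the homoclinic uses a separate integration-by-parts identity, $0=\lim_{R\to\infty}\int_{-R}^R wv\bigl(\tfrac{g(w)}{w}-\tfrac{g(v)}{v}\bigr)$ for two candidate homoclinics $v>w$, exploiting strict monotonicity of $t\mapsto g(t)/t$; your stable-manifold argument gives the decay rate but not this uniqueness statement. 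In short, the missing comparison lemma is not a detail but the load-bearing wall; without it the proposal does not close.
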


Note that for homoclinic solutions we prove exponential decay, since the assumption $\beta<B$ implies $\mu>0$. Moreover, recalling the explicit expression of $A$ and $B$, we obtain
$$
\sqrt\mu = \frac{n-4}{2}
\qquad\text{if}\ \beta=0 \,.
$$

The combination of Theorems \ref{theorem hardy solutions are radial} and \ref{theorem classification ODE} yields immediately the following classification of positive singular solutions of the PDE \eqref{differential eq}. To state this result, we denote, for $a \in [0, a_0]$, by $v_a$ the unique positive solution to \eqref{eq ODE intro} obtained from Theorem \ref{theorem classification ODE} by requiring that $\inf_\R v_a = a$ and $v_a(0) = \max_\R v$. For $a\in(0,a_0)$ we denote by $L_a$ the minimal period length of $v_a$ and we set $L_{a_0}=0$ and $L_0 = \infty$.

\begin{theorem}[Classification of PDE solutions]
\label{theorem classification}
Suppose that $g$ satisfies \eqref{conditions on g}. Let $u \in C^4(\R^n \setminus \{0\})$ be a positive solution of \eqref{differential eq} and assume that either $u\not\in L^\frac{2n}{n-4}(\R^n)$ or that the inequality $g'(t)  \leq \frac{n+4}{n-4} \frac{g(t)}{t}$ in \eqref{conditions on g} is strict for a.e. $t>0$.

Then there are $a \in [0, a_0]$ and $L \in [0, L_a]$ such that
\[ u(x) = |x|^{-\frac{n-4}{2}} v_a(\log |x| + L) \,, \]
where $v_a$ is the solution of \eqref{eq ODE intro} introduced above.

In particular, if $a=a_0$, then $u(x) = |x|^{-\frac{n-4}{2}} a_0$, and if $a\in(0,a_0)$, then
\begin{align*}
0 & < \liminf_{|x|\to 0} |x|^\frac{n-4}{2} u(x) = \liminf_{|x|\to\infty} |x|^\frac{n-4}{2} u(x)\\
& < \limsup_{|x|\to 0} |x|^\frac{n-4}{2} u(x) = \limsup_{|x|\to\infty} |x|^\frac{n-4}{2} u(x) <\infty \,.
\end{align*}
Under assumption \eqref{eq:gnearzero}, if $a=0$, then
$$
\lim_{|x|\to 0} |x|^{\frac{n-4}{2}-\sqrt\mu} u(x) = \lim_{|x|\to \infty} |x|^{\frac{n-4}{2}+\sqrt\mu} u(x) <\infty
$$
and, if $g(t)\geq \beta t$ for all $t>0$, then the limit is positive.
\end{theorem}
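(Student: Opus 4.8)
The plan is to derive Theorem \ref{theorem classification} by simply combining Theorems \ref{theorem hardy solutions are radial} and \ref{theorem classification ODE} with the Emden--Fowler substitution. First I would invoke Theorem \ref{theorem hardy solutions are radial}: under the stated hypotheses, $u$ is radially symmetric-decreasing about the origin, so $u(x) = \tilde u(|x|)$ for some function $\tilde u$ on $(0,\infty)$. Setting $v(t) := e^{\frac{n-4}{2}t}\, \tilde u(e^t)$, i.e. $u(x) = |x|^{-\frac{n-4}{2}} v(\ln|x|)$, the (by now routine) Emden--Fowler computation announced before \eqref{eq ODE intro} shows that $v \in C^4(\R)$ solves \eqref{eq ODE intro} and $v > 0$. (Since $u \in C^4(\R^n\setminus\{0\})$ is assumed, there is no regularity issue to worry about; one only needs that the radial profile of a $C^4$ radial function is $C^4$ away from the singular radius, which is standard.)

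Next I would apply Theorem \ref{theorem classification ODE} to $v$. It tells us that $v$ is either a positive constant, homoclinic to zero, or periodic, and in all cases $a := \inf_\R v \in [0,a_0]$. In each of the three cases the solution is symmetric about its extrema (trivially so for the constant), hence there is a translate $v_a(\cdot + L)$ with $v_a$ as defined just before the statement — the canonical solution normalized by $\inf v_a = a$ and $v_a(0) = \max v_a$ — such that $v = v_a(\cdot + L)$ for an appropriate $L$. For the constant case $a = a_0$ this is immediate with $v_{a_0} \equiv a_0$ and any $L$; for the periodic case $a \in (0,a_0)$ one uses the uniqueness-up-to-translation and the symmetry/one-maximum-per-period structure to pin down $L \in [0, L_a]$ (the period), since translating by a full period returns the same function; for the homoclinic case $a = 0$ the solution is symmetric-decreasing so $L = 0$ (or, allowing the translate back, $L \in [0,\infty) = [0,L_0]$, but the maximum is attained at a unique point so we may take $L = 0$). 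Undoing the change of variables yields $u(x) = |x|^{-\frac{n-4}{2}} v_a(\ln|x| + L)$, which is the main assertion.

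Finally, the asymptotic statements are direct translations of the qualitative properties of $v_a$ from Theorem \ref{theorem classification ODE} back to $u$ via $|x|^{\frac{n-4}{2}} u(x) = v_a(\ln|x| + L)$. If $a = a_0$ then $v_{a_0} \equiv a_0$ gives $u(x) = a_0 |x|^{-\frac{n-4}{2}}$. If $a \in (0,a_0)$ then $v_a$ is a non-constant periodic function with $\min v_a = a > 0$ and $\max v_a < a_0 < \infty$, and periodicity forces $\liminf$ and $\limsup$ of $v_a(s)$ as $s \to \pm\infty$ to equal the min and max respectively; reading $s = \ln|x| \to \mp\infty$ as $|x| \to 0$ and $|x| \to \infty$ gives the chain of strict inequalities. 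If $a = 0$, under \eqref{eq:gnearzero} we have $\lim_{|s|\to\infty} e^{\sqrt\mu|s|} v_a(s) = \ell \in (0,\infty)$ from part (iii), which with $s = \ln|x|$ becomes $\lim_{|x|\to 0} |x|^{\frac{n-4}{2}-\sqrt\mu} u(x) = \lim_{|x|\to\infty} |x|^{\frac{n-4}{2}+\sqrt\mu} u(x) = \ell$, positive when $g(t) \geq \beta t$. I do not expect a genuine obstacle here; the only point requiring a little care is bookkeeping the translation parameter $L$ and its range $[0, L_a]$ consistently across the three cases, using the symmetry and uniqueness statements of Theorem \ref{theorem classification ODE} to ensure every solution is hit exactly once as $(a, L)$ ranges over its domain.
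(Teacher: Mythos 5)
Your proposal is correct and follows exactly the route the paper takes: the paper offers no separate proof of Theorem \ref{theorem classification}, stating only that it results from combining Theorems \ref{theorem hardy solutions are radial} and \ref{theorem classification ODE} via the Emden--Fowler substitution, and that is precisely what you carry out, including the translation of the asymptotics back to $u$. The one quibble is your parenthetical remark that in the homoclinic case one ``may take $L=0$'': for a given solution $u$ the parameter $L$ records the logarithmic radius at which $|x|^{(n-4)/2}u(x)$ attains its maximum, so it is fixed by $u$ and can a priori be any real number rather than zero; this is a mild bookkeeping imprecision (shared, in effect, with the stated range $[0,L_0]$) and does not affect the substance of the argument.
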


Let us discuss the implications of this theorem to the question of removability of singularities. When $a>0$, the solution $u$ has a non-removable singularity at the origin. When $a=0$, the situation depends on whether the parameter $\beta$ from \eqref{conditions on g} vanishes or not. For $\beta=0$ (that is, $\sqrt\mu=\frac{n-4}{2}$), the solution extends continuously to the origin, while for $\beta>0$ (that is, $\sqrt\mu<\frac{n-4}{2}$), the solution has a power-like singularity at the origin. Note, however, that this singularity is weaker than in the case $a>0$.

It is also remarkable that the behavior near the origin is closely related to the behavior of $u$ at infinity.

In the remainder of this paper we prove Theorem \ref{theorem hardy solutions are radial}, Remark \ref{remark strict} and Theorem \ref{theorem classification ODE}. The first one is proved in Section \ref{section moving planes} using the method of moving planes, while the second and third one are proved in Section \ref{section ode} using ODE techniques.


\section{Method of moving Planes} \label{section moving planes}

Our goal in this section is to prove Theorem \ref{theorem hardy solutions are radial}. We will deduce it from the following theorem, which is our main symmetry result. 
We point out that for the proof of Theorem \ref{theorem moving planes} below, we actually do not need the lower bound $g(t)/t < g'(t)$ from \eqref{conditions on g}. 

Throughout the following, we will fix a point $a\in\R^n\setminus\{0\}$ and consider
\begin{equation}
\label{definition S}
S = \{0,a\} \subset \R^n \,.
\end{equation}
We shall prove

\begin{theorem}
\label{theorem moving planes}
Suppose that $g$ satisfies \eqref{conditions on g}.

Let $k$ be a positive function in $\R^n\setminus S$ which is symmetric-decreasing with respect to a hyperplane $H$ passing through $0$ and $a$. Assume that $k(x) \gtrsim \text{dist}(x, S)^{-1}$ in a neighborhood of $S$, that $k\in L_{\rm loc}^{(n+2)/4}(\R^n)$ and that $k\in L^n(\Omega)$ for every $\Omega$ which is a positive distance away from $S$.

Let $v \in H^2_{\rm loc}(\R^n \setminus S)$ be a weak solution of
\begin{align}\label{eq:symmeq}
\Delta^2 v = k(x)^\frac{n+4}{2} g(k(x)^{-\frac{n-4}{2}} v(x))
\qquad\text{in}\ \R^n\setminus S
\end{align}
and assume that $v>0$ and that $v \in L^\frac{2n}{n-4}(\Omega)$ for every $\Omega$ which is a positive distance away from $S$. 

Assume that either $v\not\in L^\frac{2n}{n-4}(\R^n)$, or that the inequality $g'(t)  \leq \frac{n+4}{n-4} \frac{g(t)}{t}$ in \eqref{conditions on g} is strict for a.e. $t>0$ and $k$ is strictly symmetric-decreasing.

Then $v$ is strictly symmetric-decreasing with respect to $H$. 
\end{theorem}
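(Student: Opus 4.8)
The plan is to run the method of moving planes for the fourth-order operator $\Delta^2$, which requires some care because $\Delta^2$ does not satisfy a maximum principle directly; the standard device is to rewrite the single fourth-order equation as a cooperative second-order system. Concretely, set $w = -\Delta v$, so that \eqref{eq:symmeq} becomes $-\Delta v = w$, $-\Delta w = k^{(n+4)/2} g(k^{-(n-4)/2}v)$ in $\R^n\setminus S$, with $v>0$. One first needs to know $w>0$: this should follow from the singular structure (near $S$ the solution behaves like the fundamental solution and near $\infty$ one uses the growth/positivity assumptions, together with the fact that $v$ is a genuine weak solution of the biharmonic equation on $\R^n\setminus S$), and here one exploits that $k(x)\gtrsim \mathrm{dist}(x,S)^{-1}$ near $S$ so that the right-hand side is sufficiently singular. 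Then, after choosing coordinates so that $H=\{x_1=0\}$ and writing $\Sigma_\lambda=\{x_1<\lambda\}$, $x^\lambda$ for the reflection of $x$ across $\{x_1=\lambda\}$, and $v_\lambda(x)=v(x^\lambda)$, $w_\lambda(x)=w(x^\lambda)$, one studies the sign of $V_\lambda := v_\lambda - v$ and $W_\lambda := w_\lambda - w$ on $\Sigma_\lambda$ (intersected with the relevant half-space avoiding reflected singularities) and wants to show $V_\lambda\ge 0$, $W_\lambda\ge 0$ for all admissible $\lambda$, ultimately at $\lambda=0$, which by the symmetry of $k$ and strict monotonicity of the nonlinearity forces $v$ to be symmetric-decreasing.

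The key steps, in order, are: (1) the rewriting as a cooperative system and the proof that $w>0$; (2) a \emph{starting} step showing $V_\lambda\ge 0$ and $W_\lambda\ge 0$ for $\lambda$ very negative (near the singularity hyperplane end), which for a problem on $\R^n\setminus S$ is done not by the classical Alexandrov–Serrin sliding-from-infinity but by a Kelvin-transform/inversion trick: since the equation is invariant under inversions centered at points of $S$, one transforms the problem so that the behavior at the "far" end becomes controllable, exactly as in the model case treated in \cite{FrKo}; (3) the \emph{moving} step: defining $\lambda^* = \sup\{\lambda: V_\mu\ge0,\ W_\mu\ge0\ \text{on}\ \Sigma_\mu\ \forall \mu\le\lambda\}$ and showing $\lambda^*=0$. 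Here the linearized system for $(V_\lambda,W_\lambda)$ reads $-\Delta V_\lambda = W_\lambda$ and $-\Delta W_\lambda = c_\lambda(x) V_\lambda + (\text{terms from the }k\text{-weight})$, where $c_\lambda(x)$ is a difference quotient of $t\mapsto k^{(n+4)/2}g(k^{-(n-4)/2}t)$ bounded above, via the Sobolev-critical bound $g'(t)\le \frac{n+4}{n-4}\frac{g(t)}t$, by something controlled in $L^{n/2}$ (this is where $k\in L^{(n+2)/4}_{\mathrm{loc}}$ and $k\in L^n$ away from $S$ enter); one then applies the standard narrow-domain / small-measure argument: on $\Sigma_\lambda\setminus\Sigma_{\lambda^*}$ (a set of small measure when $\lambda$ is close to $\lambda^*$) one tests the system against the negative parts $V_\lambda^-,W_\lambda^-$, uses the Sobolev and Hölder inequalities to absorb the right-hand side, and concludes $V_\lambda^-=W_\lambda^-=0$, contradicting maximality unless $\lambda^*=0$. (4) Finally, at $\lambda=0$ one has $v\le v_0$ and by the same argument with $\lambda$ approaching $0$ from the other side $v\ge v_0$, so $v$ is symmetric; strict monotonicity then follows from the strong maximum principle for the cooperative system applied to $V_\lambda$ (resp.\ from the dichotomy: either $V_\lambda\equiv 0$, which is excluded for $\lambda\ne0$ by the location of the singularities, or $V_\lambda>0$ in the interior), under either of the two stated alternative hypotheses ($v\notin L^{2n/(n-4)}$, which prevents the "optimizer" obstruction, or strictness of the nonlinearity inequality together with strict monotonicity of $k$).

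The main obstacle I expect is step~(2), getting the moving-plane procedure \emph{started} on the punctured space: one has to rule out that $V_\lambda$ or $W_\lambda$ is negative somewhere for all $\lambda$, and near the singular set $S$ the functions $v,w$ blow up, so naive comparison fails. The resolution is to use the conformal/inversion invariance built into the special form of \eqref{eq:symmeq}: inverting at a point of $S$ turns the bad end into an interior region where $v$ and $w$ have controlled (bounded, or explicitly singular) behavior and the classical narrow-region estimate applies; making this rigorous requires a careful asymptotic analysis of $v$ and $w$ near each point of $S$ and near infinity (showing $v\sim \mathrm{dist}(\cdot,S)^{-(n-4)/2}$ times something bounded, and similarly for $w$), which is the technical heart of the argument and parallels the corresponding analysis in \cite{FrKo}. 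A secondary subtlety is that $w>0$ is not automatic and must be established before the system is genuinely cooperative in a usable way; this again rests on the precise singular/growth behavior of $v$.
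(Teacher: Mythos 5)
Your proposal takes a genuinely different route from the paper, and as stated it has gaps that would be difficult to close, while the paper's own argument is engineered precisely to avoid the obstacles you identify.

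The paper does \emph{not} split $\Delta^2$ into a cooperative system $(-\Delta v, -\Delta w)$, never proves (or needs) $-\Delta v>0$, and never uses a pointwise maximum principle near the singular set $S$. Instead it works with the single scalar difference $w^{(\lambda)}=v-v_\lambda$ and derives, entirely through integral estimates, two Sobolev/Hardy--Littlewood--Sobolev type inequalities,
\[
\|w^{(\lambda)}_-\|_{\frac{2n}{n-4}} \lesssim \|(-\Delta w^{(\lambda)})_-\|_2
\qquad\text{and}\qquad
\|(-\Delta w^{(\lambda)})_-\|_2 \lesssim \|V^{(\lambda)}\mathbbm{1}_{\{w^{(\lambda)}<0\}}\|_{n/4}\,\|w^{(\lambda)}_-\|_{\frac{2n}{n-4}},
\]
where $V^{(\lambda)}$ is the difference quotient of the nonlinearity (Lemma~\ref{lemma 4.4}). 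These are obtained by testing against carefully regularized convolution kernels of $(-\Delta)^{-1}$ on the half-space, after first enlarging the admissible test-function class via the integrability results of Lemmas~\ref{lemma v integrable}--\ref{lemma v weak solution}. The quantity $(-\Delta w^{(\lambda)})_-$ is allowed to be nonzero a priori; its nonnegativity on $\Sigma_{\lambda_0}$ is a \emph{conclusion} of the argument, not an input. The ``starting step'' at $\lambda\to-\infty$ is also not a Kelvin-transform trick inside Theorem~\ref{theorem moving planes}: it follows from the hypotheses $v\in L^{\frac{2n}{n-4}}(\Omega)$, $k\in L^n(\Omega)$ away from $S$, which force $\int_{\{w^{(\lambda)}<0\}}(V^{(\lambda)})^{n/4}\to0$ by dominated convergence (Lemma~\ref{lemma inequality for w}(a)); the Kelvin transform is used earlier, in Section~\ref{sec:kelvin}, only to reduce Theorem~\ref{theorem hardy solutions are radial} to Theorem~\ref{theorem moving planes}, i.e.\ to produce the decaying $v$ and the weight $k=k_z$ in the first place.

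Two specific points in your plan are genuine gaps rather than omitted routine steps. First, you assert that $w=-\Delta v>0$ ``should follow from the singular structure'': this is not justified, and nothing in the hypotheses (only $H^2_{\rm loc}$ regularity and $L^{\frac{2n}{n-4}}$ integrability away from $S$, plus $v>0$) makes it straightforward; in fact the whole point of the integral-estimate formulation is that one never needs this, and establishing it separately would require an argument at least as heavy as the one it is meant to replace. Second, you invoke invariance of \eqref{eq:symmeq} under inversions centered at $S$; equation \eqref{eq:symmeq} with a general weight $k$ is not inversion-invariant, and the weight $k_z$ that arises after inversion is not of the same form as the $|x|^{-1}$ weight in the original equation \eqref{differential eq}. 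So ``repeating the Kelvin trick inside the moving-plane proof'' does not close the starting step; what does close it is the decay hypothesis on $v$ and $k$ already built into Theorem~\ref{theorem moving planes}. If you want to develop your cooperative-system version you would have to (i) establish $-\Delta v>0$ independently, and (ii) replace the inversion argument at the starting step by a small-measure argument using the $L^{\frac{2n}{n-4}}$ and $L^n$ hypotheses directly, at which point you would essentially be reconstructing the paper's integral estimates in a more cumbersome two-component setting.
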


By saying that a function $f$ is symmetric-decreasing with respect to a hyperplane $H$ through $0$ with normal vector $e$ we mean that
$$
f(y-te) = f( y+te) \geq f(y+se)
\qquad\text{for all}\ 0\leq s\leq t
\ \text{and all}\  y\cdot e=0 \,.
$$
By saying that $f$ is strictly symmetric-decreasing we mean that the inequality is strict for $s<t$.


\subsection{Kelvin transformation and proof of Theorem \ref{theorem hardy solutions are radial}}\label{sec:kelvin}

Although neither $u$ nor $|x|^{-1}$ from Theorem \ref{theorem hardy solutions are radial} satisfy the assumptions of Theorem \ref{theorem moving planes}, we will now show that we can transform equation \eqref{differential eq} to another equation for which these assumptions are satisfied. Indeed, for $z\neq 0$ we denote by $z^* = \frac{z}{|z|^2}$ the inversion of $z$ about the unit sphere and for any function $u$ on $\R^n\setminus\{0\}$, we define its Kelvin-type transform $u^*_z$ with respect to the point $z$ by
\begin{equation}
\label{definition u z *} u_z^* (x) := \frac{1}{|x|^{n-4}} u(\frac{x}{|x|^2} + z), \qquad x \in \R^n \setminus \{0, -z^*\}. 
\end{equation}
If $z = 0$, we denote $u^* := u_0^*$.

The technique of improving the decay properties of a solution to a conformally invariant equation by passing to the Kelvin transform goes back to \cite{CaGiSp} and has been widely used in the context of the method of moving planes. Since our equation \eqref{differential eq} is, in general, not translation invariant, $u_z^*$ with $z\neq 0$ is, in general, no longer a solution of \eqref{differential eq}. It will, however, satisfy a related equation. Indeed, if $u$ solves \eqref{differential eq}, then from the formula
\[ \Delta^2 \varphi^* (x) = \frac{1}{|x|^{n+4}} \Delta^2 \varphi (\frac{x}{|x|^2}), \quad \varphi \in C^\infty_0(\R^n \setminus \{0\}) \] 
and a straightforward calculation using the fact that $|\frac{x}{|x|^2} - \frac{z}{|z|^2}| = \frac{|x-z|}{|x||z|}$, it follows that $u_z^*$ satisfies the equation
\begin{equation}
\label{equation for kelvin transform u*z} 
\Delta^2 u_z^* (x) = k_z(x)^\frac{n+4}{2} g(k_z(x)^{-\frac{n-4}{2}} u_z^*(x))
\text{ in } \R^n\setminus\{0,-z^*\} 
\end{equation}
with
\[ k_z(x) = \frac{|z^*|}{|x| |x + z^*|} \,,
\qquad x\in\R^n\setminus\{0,-z^*\} \,, \]
for $z\neq 0$ and $k_0(x) = |x|^{-1}$ if $z = 0$. This equation is understood in the weak sense, i.e. 
\begin{equation*}
\int_{\R^n} \Delta u_z^* \Delta \varphi = \int_{\R^n}  k_z(x)^\frac{n+4}{2} g(k_z(x)^{-\frac{n-4}{2}} u_z^*(x)) \varphi(x) \diff x \ \quad \text{for all } \varphi \in C^\infty_0(\R^n \setminus \{0,-z^*\}). 
\end{equation*}
Notice that this means, in particular, that if $u$ solves equation \eqref{differential eq}, then so does $u^*$.  

Here are some more properties of $u^*_z$ which we need for our argument.

\begin{lemma}
\label{lemma properties kelvin transform}
Suppose that $u \in H^2_{\rm loc}(\R^n \setminus \{0\})$. 
\begin{enumerate}
\item[(a)] If $z \neq 0$, then $u_z^* \in H^2_{\rm loc}(\R^n \setminus \{0,-z^*\})$ and, in fact, we have 
\[ \int_\Omega |\Delta u_z^*|^2 < \infty \quad \text{ and } \quad \int_\Omega |u_z^*|^\frac{2n}{n-4} < \infty \]
for every $\Omega$ which is a positive distance away from $0$ and $-z^*$. 
\item[(b)] If $z = 0$, then $u^* \in H^2_{\rm loc}(\R^n \setminus \{0\})$.
\end{enumerate}
\end{lemma}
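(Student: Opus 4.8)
The statement to prove is Lemma \ref{lemma properties kelvin transform}, describing the local regularity and integrability of the Kelvin-type transform $u_z^*$. The plan is to reduce everything to a change of variables in the integrals, exploiting the explicit form of the map $\Phi_z(x) = \frac{x}{|x|^2} + z$ and the conformal weights already recorded before the lemma.

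First I would treat part (b), the case $z=0$, since it is the cleanest: the inversion $x \mapsto x/|x|^2$ is a smooth diffeomorphism of $\R^n\setminus\{0\}$ onto itself, and the conformal covariance identity $\Delta^2 \varphi^*(x) = |x|^{-(n+4)} \Delta^2\varphi(x/|x|^2)$ stated above has a companion first-order statement. Concretely, I would use that $u^* \in H^2_{\rm loc}$ is equivalent to $u^*, \nabla u^*, D^2 u^* \in L^2_{\rm loc}$, and compute how each of these behaves under the substitution $y = x/|x|^2$, whose Jacobian is $|x|^{-2n}$ and whose differential has operator norm $|x|^{-2}$. A direct (if slightly tedious) chain-rule computation shows that on a compact set $K \Subset \R^n\setminus\{0\}$, the quantities $\int_K |D^j u^*|^2\,dx$ for $j=0,1,2$ are each controlled by a finite linear combination of $\int_{K^*} |D^i u|^2\,dy$, $i \le j$, over the compact image $K^* \Subset \R^n\setminus\{0\}$, with weights that are bounded functions of $|x|$ on $K$; since $u\in H^2_{\rm loc}(\R^n\setminus\{0\})$ by hypothesis, each such integral is finite. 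This gives $u^*\in H^2_{\rm loc}(\R^n\setminus\{0\})$.

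For part (a), the only new feature is the translation by $z$: the relevant map is $x \mapsto \frac{x}{|x|^2} + z$, which is a smooth diffeomorphism from $\R^n\setminus\{0,-z^*\}$ onto $\R^n\setminus\{0,z\}$ (using $|\frac{x}{|x|^2}-\frac{-z}{|z|^2}| = \frac{|x+z^*|}{|x|\,|z^*|}$, so the preimage of the singular point $0$ of $u$'s domain is exactly $-z^*$). On any set $\Omega$ at positive distance from both $0$ and $-z^*$, the factor $|x|$ and the Jacobian are bounded above and below, and the image $\Omega^* = \Phi_z(\Omega)$ is a set at positive distance from $0$ (hence from the singularity of $u$); it may touch $z$, but $u$ is assumed only in $H^2_{\rm loc}(\R^n\setminus\{0\})$, which is fine near $z\ne 0$. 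So the same change-of-variables estimates as in (b) give $\int_\Omega |\Delta u_z^*|^2 < \infty$. For the $L^{2n/(n-4)}$ bound, I would note that under this conformal change of variables the quantity $\int |u_z^*|^{2n/(n-4)}\,dx$ transforms, because $2n/(n-4)$ is the conformally critical exponent, essentially into $\int_{\Omega^*}|u|^{2n/(n-4)}\,dy$ up to bounded factors on $\Omega$; and $u\in L^{2n/(n-4)}_{\rm loc}(\R^n\setminus\{0\})$ because $u\in H^2_{\rm loc}(\R^n\setminus\{0\})$ by the Sobolev embedding $H^2 \hookrightarrow L^{2n/(n-4)}$ applied locally. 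Finally the two displayed finite integrals together with interior elliptic considerations upgrade to membership in $H^2_{\rm loc}(\R^n\setminus\{0,-z^*\})$.

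The main obstacle, and the only place requiring genuine care rather than bookkeeping, is the second derivative term in part (a): differentiating $u_z^*(x) = |x|^{4-n}u(\Phi_z(x))$ twice produces, besides the term with $D^2 u$ composed with $\Phi_z$, cross terms of the form $(\text{lower weight})\cdot \nabla u(\Phi_z(x))$ and $(\text{lower weight})\cdot u(\Phi_z(x))$, so one must check that the full $H^2$ norm of $u$ on the image set — not merely the $L^2$ norm of $D^2 u$ — is what gets controlled, which is exactly why the hypothesis is phrased as $u\in H^2_{\rm loc}$ and why part (a) explicitly lists both the $\Delta u_z^*$ and the $u_z^*$ integrals. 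Keeping track of which weights stay bounded on a set bounded away from $0$ and $-z^*$ is straightforward once one writes $\Phi_z$ and its first two derivatives explicitly, so I expect the proof to be short.
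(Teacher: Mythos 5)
Your overall outline — write $\Delta u_z^*$ via the Kelvin identity plus the product rule, producing a term with $\Delta u$ composed with $\Phi_z$ and two lower-order cross terms, then change variables back — matches the paper's proof. But there is a genuine gap in part (a), and it is exactly at the place you identify as ``the only place requiring genuine care.''

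You assert that on a set $\Omega$ at positive distance from $\{0,-z^*\}$ ``the factor $|x|$ and the Jacobian are bounded above and below,'' and conclude that the cross terms carry bounded weights on $\Omega$. This is false: the hypothesis on $\Omega$ does not make it bounded, and in fact the lemma is applied to unbounded sets (e.g.\ half-spaces minus a neighborhood of $S$). When $\Omega$ is unbounded, $|x| \to \infty$ is allowed, and after the change of variables $y = \Phi_z(x)$ the cross terms become
\[
\int_{\Phi_z(\Omega)} |y-z|^{-4}\,|u(y)|^2\,dy
\qquad\text{and}\qquad
\int_{\Phi_z(\Omega)} |y-z|^{-2}\,|\nabla u(y)|^2\,dy ,
\]
where $\Phi_z(\Omega)$ is a bounded set away from $0$ but accumulating at $z$. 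The weights $|y-z|^{-4}$ and $|y-z|^{-2}$ are singular precisely where the set accumulates, and they are \emph{not} controlled by the plain $L^2$ norms of $u$ and $\nabla u$ on a neighborhood of $z$. This is why the paper invokes the Hardy--Rellich inequality (for the $|y-z|^{-4}|u|^2$ term) and Hardy's inequality applied to $\partial_j u$ (for the $|y-z|^{-2}|\nabla u|^2$ term), in each case via a cutoff to reduce to a slightly larger bounded set still away from the relevant singularity. Without these inequalities the argument does not close; the weights do not ``stay bounded,'' and the full $H^2_{\rm loc}$ hypothesis is used through Hardy/Hardy--Rellich rather than through direct $L^2$ control of $u$ and $\nabla u$. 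Your treatment of part (b) on compact $K$ and of the $L^{2n/(n-4)}$ bound in part (a) is fine, since for the latter the conformal exponent makes the weight drop out exactly.
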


\begin{proof}
Let $\Omega \subset \R^n$ be a positive distance away from 0 and $-z^*$ and let $\Omega^*=\{ x^*:\ x\in\Omega\}$. Then the set $\Omega^* + z$ is bounded and a positive distance away from 0. By a change of variables, we have 
\begin{equation}
\label{kelvin transform Lp norm} \int_\Omega {u^*_z}^\frac{2n}{n-4}(x) \diff x =  \int_{\Omega^*} u^\frac{2n}{n-4}(x + z) \diff x = \int_{\Omega^* + z} u^\frac{2n}{n-4} < \infty 
\end{equation}
since $u  \in L^\frac{2n}{n-4}_{\rm loc}(\R^n \setminus \{0\})$ by Sobolev embedding. 

Let us now turn to the second derivative. As in \cite[Proof of Lemma 3.6]{Xu}, using Kelvin's transformation rule 
\[ \Delta \big(|x|^{2-n} u(\frac{x}{|x|^2})\big) = |x|^{-n-2} (\Delta u)(\frac{x}{|x|^2}), \]
we have by the product rule for Sobolev functions that $\Delta  u_z^*$ exists weakly in $\R^n\setminus\{0,-z^*\}$ and is given by
\begin{align}
\label{Delta kelvin transform}
&  \Delta u_z^*(x) = \Delta \big( |x|^2 |x|^{2-n} u(\frac{x}{|x|^2} + z) \big) \notag \\
 &=  2n |x|^{2-n} u(\frac{x}{|x|^2} + z) + 4 x \cdot \nabla \Big( |x|^{2-n} u(\frac{x}{|x|^2}+z) \Big) + |x|^{-n} \Delta u(\frac{x}{|x|^2}+z) \notag \\
& = -2(n-4) |x|^{2-n} u(\frac{x}{|x|^2} + z) - 4 |x|^{-n} x \cdot \nabla u(\frac{x}{|x|^2}+ z) + |x|^{-n} \Delta u(\frac{x}{|x|^2}+z)   .
\end{align}
We show square integrability of each of the three terms on the right side. First, 
\begin{align*} 
\int_\Omega \big| |x|^{2-n} u(\frac{x}{|x|^2}+z) \big|^2 &= \int_{\Omega^*}   |x|^{-4} |u(x + z)|^2  \lesssim \int_{\widetilde{\Omega^*}} |\Delta u (x+ z)|^2 < \infty
\end{align*}
by the Hardy--Rellich inequality, where we choose some larger $\widetilde{\Omega^*} \supset \overline{\Omega^*}$ which is still a positive distance away from $-z$. This follows by a simple argument using cutoff functions. For the second term of \eqref{Delta kelvin transform}, we have
 \begin{align*}
 \int_\Omega \Big| |x|^{-n} x \cdot \nabla u(\frac{x}{|x|^2}+z) \Big|^2 & \leq \int_\Omega |x|^{2-2n} |\nabla u(\frac{x}{|x|^2}+z)|^2 = \int_{\Omega^*} |x|^{-2} |\nabla u(x+z)|^2 \\
 & \lesssim  \| u \|_{H^2(\widetilde{\Omega^*} + z)}^2 < \infty
 \end{align*}
by Hardy's inequality applied to $\partial_j u$, $j = 1,...,n$, with some set $\widetilde{\Omega^*}$ as above. 
The third term of \eqref{Delta kelvin transform} gives
\begin{align*}
\int_\Omega |x|^{-2n} |\Delta u(\frac{x}{|x|^2}+z)|^2 = \int_{\Omega^*+ z} |\Delta u (x)|^2 < \infty
\end{align*}
because $u \in H^2_{\rm loc}(\R^n \setminus \{0\})$.

The proof of part (b) is similar, but simpler, and we omit it. 
 \end{proof}

We can now deduce Theorem \ref{theorem hardy solutions are radial} from Theorem \ref{theorem moving planes} in a straightforward way.  

\begin{proof}
[Proof of Theorem \ref{theorem hardy solutions are radial} given Theorem \ref{theorem moving planes}]

\emph{Step 1.} Let $z\neq 0$. We shall show that $u_z^*$ is strictly symmetric-decreasing with respect to any hyperplane passing through $0$ and $z$.

We want to deduce this from Theorem \ref{theorem moving planes} with $a=-z^*$, $v=u_z^*$ and $k=k_z$ applied to equation \eqref{equation for kelvin transform u*z}. Let us verify that the assumptions of this theorem are satisfied.

This is clear for the assumptions on $g$. Next, the function $k(x) = \frac{|z^*|}{|x| |x + z^*|}$ is strictly symmetric-decreasing with respect to any hyperplane passing through $0$ and $z$ (which is the same as passing through $0$ and $a=-z^*$). It decreases as $|x|^{-2}$ as $|x| \to \infty$ and behaves as $\text{dist}(x, S)^{-1}$ near the set $S$ given by \eqref{definition S}. Therefore the assumptions on $k$ are satisfied. Finally, according to Lemma \ref{lemma properties kelvin transform} the function $v=u_z^*$ belongs to $H_{\rm loc}^2(\R^n\setminus\{0,-z^*\})$ and is in $L^\frac{2n}{n-4}$ on any $\Omega$ which is a positive distance away from $0$ and $-z^*$. As in the proof of that lemma one sees that $u\in L^\frac{2n}{n-4}(\R^n)$ if and only if $u_z^*\in L^\frac{2n}{n-4}(\R^n)$.

Thus, the assertion of Step 1 follows from Theorem \ref{theorem moving planes}.

\emph{Step 2.} We now deduce that $u$ is radially symmetric-decreasing.

Let $l_z$ denote the line through $0$ and $z$. It follows from Step 1 that for any hyperplane $H$ orthogonal to $l_z$ the function $u^*_z$ is strictly radially symmetric-decreasing in $H$ with respect to the point $H\cap l_z$. By letting $z \to 0$ along a fixed direction $\nu\in\mathbb S^{n-1}$, we infer that in any hyperplane with normal $\nu$, $u^*$ is radially symmetric-decreasing with respect to the point where the hyperplane intersects the line through $0$ in direction $\nu$. Since $\nu$ is arbitrary, we conclude that $u^*$ is radially symmetric-decreasing with respect to $0$.

Applying what we have proved so far to $u^*$, which, by Lemma \ref{lemma properties kelvin transform}, is in $H^2_{\rm loc}(\R^n \setminus \{0\})$ if and only if $u$ is and which solves \eqref{differential eq} if and only if $u$ does, we also find that $u = (u^*)^*$ is radially symmetric-decreasing with respect to $0$.
\end{proof}


\subsection{Some integrability estimates}
\label{subsection integrability estimates}

In the previous subsection we have reduced the proof of Theorem \ref{theorem hardy solutions are radial} to the proof of Theorem \ref{theorem moving planes}.

In this and the next subsection we always assume that $g$ and $v$ satisfy the assumptions in Theorem \ref{theorem moving planes} and that \eqref{eq:symmeq} holds in the weak sense.

Our first step in proving Theorem \ref{theorem moving planes} is to understand the behavior of the solution near the singular points. This will later allow us to use a larger class of test functions, including functions whose support contains the set $S$ given by \eqref{definition S}. This amounts to proving integrability of $v$ at $S$ and constitutes the purpose of the present subsection. Our arguments in this subsection are inspired by \cite[Lemmas 3.1 and 3.2]{Yang}.

\begin{lemma}
\label{lemma v integrable}
We have $k^\frac{n+4}{2} g(k^{-\frac{n-4}{2}} v) \in L^1_{\rm loc}(\R^n)$ and $k^\frac{n+4}{2} g(k^{-\frac{n-4}{2}} v) \in L^\frac{2n}{n+4}(\Omega)$ for every $\Omega$ which is a positive distance away from $S$.
\end{lemma}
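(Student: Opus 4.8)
The plan is to treat the two claimed integrabilities separately, the local one near $S=\{0,a\}$ and the one on sets bounded away from $S$, the latter being essentially immediate. Away from $S$ the function $v$ lies in $H^2_{\rm loc}$ and hence in $L^\frac{2n}{n-4}_{\rm loc}$, and $k$ is bounded above and below on such a set. Using the growth bound $g(t)\leq g(1)+C t^\frac{n+4}{n-4}$ for $t\geq 1$ (which follows from $g'(t)\leq \frac{n+4}{n-4}g(t)/t$, i.e.\ $t\mapsto g(t)/t^\frac{n+4}{n-4}$ is non-increasing) together with $g(t)\to0$ as $t\to0$, one gets $k^\frac{n+4}{2}g(k^{-\frac{n-4}{2}}v)\lesssim 1 + v^\frac{n+4}{n-4}$ pointwise, and since $\frac{n+4}{n-4}\cdot\frac{2n}{n+4}=\frac{2n}{n-4}$, the right side lies in $L^\frac{2n}{n+4}(\Omega)$. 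That disposes of the second assertion.

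For the local statement, fix one of the singular points, say $0$, and work in a small ball $B_\rho$ around it on which the only singularity is $0$; the point $a$ is handled identically (and translation plays no role since we only need a local estimate). On $B_\rho$ we have the lower bound $k(x)\gtrsim |x|^{-1}$, and we may also use an upper bound $k(x)\lesssim |x|^{-1}$ coming from the explicit form of $k$ (or, if one wants to stay at the level of the hypotheses, from $k\in L^{(n+2)/4}_{\rm loc}$ plus the monotonicity of $k$; but the explicit Kelvin-type $k$ makes $k\asymp|x|^{-1}$ near $0$). Write $F:=k^\frac{n+4}{2}g(k^{-\frac{n-4}{2}}v)$, so that $\Delta^2 v = F\geq 0$ weakly on $B_\rho\setminus\{0\}$. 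The first step is to localize: choose a cutoff $\chi\in C_0^\infty(B_\rho)$ with $\chi\equiv1$ on $B_{\rho/2}$ and set $w:=\chi v$, which lies in $H^2(\R^n)$ after extending by $0$, and which satisfies $\Delta^2 w = \chi F + (\text{lower-order terms in }v,\nabla v,\ldots)$ on $\R^n\setminus\{0\}$. The lower-order terms are supported in the annulus $\{\rho/2\le|x|\le\rho\}$, where everything is in $L^2$ by the $H^2_{\rm loc}$ assumption, so they pose no difficulty. The heart of the matter is then a bootstrap: starting from $v\in L^{p_0}_{\rm loc}$ with $p_0=\frac{2n}{n-4}$ (Sobolev), and from the pointwise bound (valid for all $t>0$ by the growth conditions, splitting at $t=1$)
\[
k^\frac{n+4}{2}g(k^{-\frac{n-4}{2}}v)\ \lesssim\ |x|^{-\frac{n+4}{2}}\bigl(|x|^{-\frac{n-4}{2}}v\bigr)^\frac{n+4}{n-4}+|x|^{-\frac{n+4}{2}}\bigl(|x|^{-\frac{n-4}{2}}v\bigr)
\ =\ |x|^{-n}v^\frac{n+4}{n-4}+|x|^{-4}v,
\]
one estimates $\|F\|_{L^{q}(B_{\rho/2})}$ in terms of $\|v\|_{L^{p}}$ for appropriate exponents (the $|x|^{-n}$ weight is handled via a weighted Hölder / Hardy–Rellich-type inequality, which is where the ``small ball'' and the assumption $\beta<\frac{n^2(n-4)^2}{16}$ morally enter), then invokes $W^{4,q}$ elliptic regularity for $\Delta^2$ on $\R^n$ to get $v\in W^{4,q}_{\rm loc}\hookrightarrow L^{p_1}_{\rm loc}$ with $p_1>p_0$, and iterates. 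After finitely many steps one reaches $v\in L^p_{\rm loc}$ for $p$ large enough that, again by the pointwise bound, $F\in L^1_{\rm loc}(B_{\rho})$ — in fact $F$ in a slightly better space — which is the assertion.

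The main obstacle I expect is making the bootstrap genuinely gain integrability rather than stall, because the critical exponent $\frac{n+4}{n-4}$ and the critical weight $|x|^{-n}$ are exactly at the borderline of what $W^{4,q}$ regularity can absorb; naively one reproduces the same exponent one started with. The standard fix — and I expect this is what the authors do, following the cited Lemmas~3.1–3.2 of \cite{Yang} and the method of \cite{Xu} already used above — is to exploit that we are on an arbitrarily small ball, so that the $L^{(n+2)/4}$-smallness of the ``potential'' $V(x):=|x|^{-4}+|x|^{-n}v(x)^{8/(n-4)}$ on $B_\rho$ can be made small (using $v\in L^\frac{2n}{n-4}$, so $v^{8/(n-4)}\in L^{(n+2)/4}$ when combined correctly, or a suitable truncation/splitting of $V$ into a small $L^{(n+2)/4}$ part plus a bounded part). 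Rewriting $\Delta^2 w = Vw + (\text{good})$ and treating $Vw$ perturbatively via the inhomogeneous Hardy–Rellich / Rellich–Sobolev inequality then closes the estimate and lets the iteration proceed. A secondary technical point is checking that the distributional equation $\Delta^2 v = F$, a priori only valid away from the point singularity, may be used against the chosen test functions — but since $\{0\}$ has zero $H^2$-capacity in dimension $n\geq5$ and $v\in H^2_{\rm loc}$ near $0$ by hypothesis, the removability of the point for the test-function class is routine, and the nonnegativity of $F$ makes the monotone-convergence passage to the limit clean.
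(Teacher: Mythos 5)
Your treatment of the integrability away from $S$ is essentially identical to the paper's (growth bound $g(t)\lesssim 1+t^{(n+4)/(n-4)}$ from integrating $g'\leq\frac{n+4}{n-4}g/t$, then the pointwise bound $k^{\frac{n+4}{2}}g(k^{-\frac{n-4}{2}}v)\lesssim v^{\frac{n+4}{n-4}}+k^{\frac{n+4}{2}}$ and the integrability hypotheses on $v$ and $k$). For the local statement near $S$, however, your route is genuinely different from the paper's and contains gaps that I do not think are merely technical.

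The paper's proof does not use elliptic regularity or a bootstrap at all. Instead it tests the equation against $\xi_\epsilon=\eta_\epsilon^m$ with $m=\frac{4q}{q-1}$ (where $q>1$ comes from the superlinearity assumption $g(t)\geq ct^q$), estimates $|\Delta^2\xi_\epsilon|\lesssim\epsilon^{-4}\xi_\epsilon^{1/q}\mathbbm 1_{\{\epsilon<|x|<2\epsilon\}}+\Delta^2\eta^m$, uses the lower bound $k\gtrsim\mathrm{dist}(x,S)^{-1}$ together with H\"older to absorb the weights, and then — this is the crux — uses the inequality $t^q\lesssim g(t)+1$ to close a \emph{self-referential} estimate of the Gronwall type
\[
\int k^{\frac{n+4}{2}}g(k^{-\frac{n-4}{2}}v)\,\xi_\epsilon \;\lesssim\;\Bigl(\int k^{\frac{n+4}{2}}g(k^{-\frac{n-4}{2}}v)\,\xi_\epsilon\Bigr)^{1/q}+1,
\]
uniformly in $\epsilon$, and concludes by monotone convergence. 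The superlinearity hypothesis on $g$ is used crucially here (both to choose the exponent $m$ and to close the self-referential bound); your sketch never invokes it, which is already a sign the argument is heading in a different direction.

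Your bootstrap plan has two concrete problems. First, you assert that $w=\chi v\in H^2(\R^n)$ after extension by zero, citing ``$v\in H^2_{\rm loc}$ near $0$ by hypothesis'' and ``zero $H^2$-capacity of a point''; but the hypothesis is only $v\in H^2_{\rm loc}(\R^n\setminus S)$, with no control at the singular points, so this localization is not available — indeed getting local information at $S$ is precisely what the lemma is for, and assuming it would be circular. Relatedly, the equation $\Delta^2 v=F$ is only known to hold against test functions supported away from $S$, and you cannot yet test against $\chi$ that covers $0$ without first knowing $F\in L^1_{\rm loc}$, which is again the conclusion you want. Second, you correctly identify that the naive bootstrap stalls because both the exponent $\frac{n+4}{n-4}$ and the weight $|x|^{-n}$ are critical, but the suggested fix (small-ball smallness of the potential, perturbative Hardy--Rellich) remains a sketch; it is not clear it closes without additional a priori information on $v$ near $S$. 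Finally, you invoke an upper bound $k(x)\lesssim|x|^{-1}$ ``from the explicit Kelvin-type $k$'': the lemma is stated under the abstract hypotheses of Theorem \ref{theorem moving planes}, which give only a lower bound on $k$ near $S$, and the paper's proof indeed uses only that lower bound.
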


\begin{proof}
First, assume that $\Omega$ is a positive distance away from $S$. The inequality $g'(t)  \leq \frac{n+4}{n-4} \frac{g(t)}{t}$ in \eqref{conditions on g} implies by integration that $g(t)\leq g(1) t^{(n+4)/(n-4)}$ for $t\geq 1$ and therefore
$$
g(t) \leq g(1) t^{(n+4)/(n-4)} + \sup_{0\leq t\leq 1} g \lesssim t^{(n+4)/(n-4)} + 1 \qquad \text{ for all } t\geq 0 \,.
$$
Thus, $k^\frac{n+4}{2} g(k^{-\frac{n-4}{2}} v) \lesssim v^\frac{n+4}{n-4} + k^\frac{n+4}2$ and consequently
\begin{align*}
\int_\Omega \left( k^\frac{n+4}{2} g(k^{-\frac{n-4}{2}} v) \right)^\frac{2n}{n+4} & \lesssim \int_{\Omega} \left( v^\frac{2n}{n-4} + k^n \right) \,.
\end{align*}
The integral on the right side is finite by our assumptions.

In order to prove the local integrability, let $\eta$ be a smooth, non-negative function on $\R^n$ with $\eta\equiv 1$ near $0$ and with support in a ball not containing $a$. We shall show that $\int k^\frac{n+4}{2} g(k^{-\frac{n-4}{2}} v)\eta<\infty$. This and a corresponding assertion for $\eta$ supported near $a$ proves the lemma.

For $\epsilon>0$ so small that $\eta\equiv 1$ on $\{|x|\leq 2\epsilon\}$, we can find non-negative cut-off functions $\eta_\epsilon \in C^\infty_0$ such that 
\[ \eta_\epsilon (x)
\begin{cases}
\equiv 0 & \text{ if } |x| \leq \epsilon, \\
\equiv \eta & \text{ if } |x| \geq 2 \epsilon \,,
\end{cases}
\]
and
\[ |D^k \eta_\epsilon(x) | \lesssim \epsilon^{-k} 
\text{ for all } k= 1,2,3,4 \,.
\]

For $q$ as in assumption \eqref{conditions on g}, let $m = \frac{4q}{q-1}$ and define $\xi_\epsilon = (\eta_\epsilon)^m$. Since $\xi_\epsilon$ is supported away from $S$, it is a valid test function for equation \eqref{eq:symmeq}, and we obtain
\begin{align*}
\int_{\R^n} k^\frac{n+4}{2} g(k^{-\frac{n-4}{2}} v ) \xi_\epsilon = \int_{\R^n} \Delta v \Delta \xi_\epsilon = \int_{\R^n} v \Delta^2 \xi_\epsilon. 
\end{align*}
Observing that 
\[ |\Delta^2 \xi_\epsilon| \lesssim \epsilon^{-4} \eta_\epsilon^{m-4} 1_{\{\epsilon < |x| < 2 \epsilon\}} + \Delta^2 \eta^m = \epsilon^{-4} \xi_\epsilon^{1/q} 1_{\{\epsilon < |x| < 2 \epsilon\}} + \Delta^2 \eta^m \,, \]
and using the assumed lower bound on $k$ (note that $q\leq\frac{n+4}{n-4}$, since $g(t) \lesssim t^{(n+4)/(n-4)}$), we find that 
\begin{align*}  \int_{\R^n} k^\frac{n+4}{2} g(k^{-\frac{n-4}{2}} v) \xi_\epsilon & \lesssim \epsilon^{-4} \int_{\{\epsilon < |x| < 2 \epsilon\}} v \xi_\epsilon^{1/q} + \int v \Delta^2 \eta^m    \\
& \lesssim  \epsilon^{-4 + \frac{n+4}{2q} - \frac{n-4}{2}} \int_{\{\epsilon < |x| < 2 \epsilon\}} v k^{-\frac{n-4}{2} + \frac{n+4}{2q}} \xi_\epsilon^{1/q} + \int v \Delta^2 \eta^m  \\
& \lesssim  \epsilon^{-4 + \frac{n+4}{2q} - \frac{n-4}{2} + n\frac{q-1}{q}} \big( \int_{\{\epsilon < |x| < 2 \epsilon\}} v^q k^{-q \frac{n-4}{2} + \frac{n+4}{2}} \xi_\epsilon\big)^\frac{1}{q} + \int v \Delta^2 \eta^m \,.
\end{align*}
Because of the equivalence
\begin{equation}
\label{exponent of epsilon equivalence}
 -4 + \frac{n+4}{2q} - \frac{n-4}{2} + n\frac{q-1}{q} > 0 \quad \Leftrightarrow \quad q > 1 \,,
\end{equation} 
the exponent of $\epsilon$ is positive and therefore we can drop the factor in front of the first term. The second term is finite since $\Delta^2\eta^m=0$ near zero, and independent of $\epsilon$. To close the estimate, we will use the fact that
$$
t^q \leq c^{-1} g(t) +1 \lesssim g(t) +1
\text{ for all } t\geq 0 \,.
$$
Using this, we can estimate
\begin{align*} & \big( \int_{\{\epsilon < |x| < 2 \epsilon\}} v^q k^{-q \frac{n-4}{2} + \frac{n+4}{2}} \xi_\epsilon \big)^\frac{1}{q}  \\
\lesssim \big( & \int_{\{\epsilon < |x| < 2 \epsilon\} }  k^\frac{n+4}{2} g(k^{-\frac{n-4}{2}} v) \xi_\epsilon +  \int_{\{\epsilon < |x| < 2 \epsilon\} } k^\frac{n+4}{2} \big) ^\frac{1}{q}. 
\end{align*}
The second term on the right side is finite by assumption. We have thus proved that 
\[ \int_{\R^n} k^\frac{n+4}{2} g(k^{-\frac{n-4}{2}} v) \xi_\epsilon \lesssim \big(  \int_{\R^n} k^\frac{n+4}{2} g(k^{-\frac{n-4}{2}} v) \xi_\epsilon \big)^\frac{1}{q} + 1 \,, \]
which implies that 
\[ \int_{\R^n} k^\frac{n+4}{2} g(k^{-\frac{n-4}{2}} v) \xi_\epsilon \lesssim 1. \]
Letting $\epsilon \to 0$, we conclude by monotone convergence that 
\[  \int_{\R^n} k^\frac{n+4}{2} g(k^{-\frac{n-4}{2}} v)\eta  < \infty, \]
which finishes the proof.
\end{proof}

We can use the fundamental integrability properties from Lemma \ref{lemma v integrable} to enlarge the class of functions one can test equation \eqref{eq:symmeq} against. This is the content of the next lemma. 

We recall that by definition $\dot H^2(\R^n)$ is the completion of $C_0^\infty(\R^n)$ with respect to $\|\Delta u\|_2$, see e.g. \cite{Mazya}.

\begin{lemma}
\label{lemma v weak solution}
Let $\varphi \in \dot H^2(\R^n)$ and assume, in addition, that $\Delta\varphi\equiv 0$ in a neighborhood of $S$. Then 
\[ \int_{\R^n} \Delta v \Delta \varphi = \int_{\R^n}  k^\frac{n+4}{2} g(k^{-\frac{n-4}{2}} v) \varphi. \]
\end{lemma}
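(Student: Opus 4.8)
The plan is to prove the identity first for $\varphi\in C_0^\infty(\R^n\setminus S)$, where it holds by the definition of a weak solution, and then to extend it by density. Concretely, given $\varphi\in\dot H^2(\R^n)$ with $\Delta\varphi\equiv 0$ on a neighbourhood $U$ of $S$, I would construct a sequence $\varphi_j\in C_0^\infty(\R^n\setminus S)$ such that $\varphi_j\to\varphi$ in $\dot H^2(\R^n)$ and, in addition, $\int_{\R^n}\big|k^{\frac{n+4}{2}}g(k^{-\frac{n-4}{2}}v)\big|\,|\varphi_j-\varphi|\to 0$, and then pass to the limit in both sides of $\int\Delta v\,\Delta\varphi_j=\int k^{\frac{n+4}{2}}g(k^{-\frac{n-4}{2}}v)\,\varphi_j$. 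The sequence $\varphi_j$ is produced in three standard steps: (i) truncate at infinity by a cutoff $\chi_R$ supported in $B_{2R}$; (ii) excise a shrinking neighbourhood of $S$ by multiplying with $1-\zeta_\delta$, where $\zeta_\delta\equiv 1$ on $\{\operatorname{dist}(\cdot,S)<\delta\}$, $\operatorname{supp}\zeta_\delta\subset\{\operatorname{dist}(\cdot,S)<2\delta\}$ and $|D^m\zeta_\delta|\lesssim\delta^{-m}$; (iii) mollify on a scale $\ll\delta$.

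\textbf{Convergence in $\dot H^2$.} For the truncation at infinity this is routine: the commutator terms $\nabla\chi_R\cdot\nabla\varphi$ and $\varphi\,\Delta\chi_R$ are supported on $\{|x|\sim R\}$ and, by the Hardy and Hardy--Rellich inequalities $\||x|^{-1}\nabla\varphi\|_2+\||x|^{-2}\varphi\|_2\lesssim\|\Delta\varphi\|_2$, their $L^2$ norms are tails of convergent integrals, hence vanish as $R\to\infty$ (here $n\ge5$ is used). For the excision of $S$ the key observation is that $\Delta\varphi\equiv 0$ on $U$, so for $\delta$ small the leading term $\zeta_\delta\,\Delta(\chi_R\varphi)$ vanishes identically, and the remaining commutator $2\nabla\zeta_\delta\cdot\nabla(\chi_R\varphi)+(\chi_R\varphi)\Delta\zeta_\delta$ is supported on the annulus $A_\delta:=\{\delta<\operatorname{dist}(\cdot,S)<2\delta\}$, on which $\chi_R\varphi$ is smooth with bounded derivatives (being harmonic near $S$); its $L^2$ norm is therefore $\lesssim\delta^{-2}\,|A_\delta|^{1/2}\lesssim\delta^{\frac{n-4}{2}}\to0$. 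Mollification converges by standard properties.

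\textbf{Convergence of the nonlinear term.} Split $\R^n=U\cup(\R^n\setminus U)$. Away from $S$: by Lemma \ref{lemma v integrable}, $k^{\frac{n+4}{2}}g(k^{-\frac{n-4}{2}}v)\in L^{\frac{2n}{n+4}}$, while $\varphi_j-\varphi\to0$ in $L^{\frac{2n}{n-4}}(\R^n)$ by Sobolev's inequality, so Hölder closes the estimate (the mollification step being handled on a fixed compact set away from $S$ by the same argument). Near $S$: at the excision stage $\varphi_j-\varphi$ equals $-\zeta_\delta\chi_R\varphi$, which is supported in $\{\operatorname{dist}(\cdot,S)<2\delta\}$ and bounded there; since $k^{\frac{n+4}{2}}g(k^{-\frac{n-4}{2}}v)\in L^1_{\rm loc}(\R^n)$ by Lemma \ref{lemma v integrable} and these sets shrink to the null set $S$, dominated convergence gives the claim.

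\textbf{Passing to the limit on the left, and the main obstacle.} Write $\int\Delta v\,\Delta(\varphi_j-\varphi)$ as the sum of its parts over $\R^n\setminus U$ and over $U$. On $\R^n\setminus U$ (intersected with the ball $B_{3R}$ containing all relevant supports) we have $\Delta v\in L^2$, by $v\in H^2_{\rm loc}(\R^n\setminus S)$ together with interior elliptic estimates for $\Delta^2 v=k^{\frac{n+4}{2}}g(k^{-\frac{n-4}{2}}v)\in L^{\frac{2n}{n+4}}$ (Lemma \ref{lemma v integrable}), which give $v\in W^{4,\frac{2n}{n+4}}_{\rm loc}\hookrightarrow H^2_{\rm loc}$ away from $S$; since $\Delta\varphi_j\to\Delta\varphi$ in $L^2$, Cauchy--Schwarz forces this part to vanish. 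On $U$ the term $\Delta\varphi$ is zero, and $\Delta\varphi_j$ is, up to mollification, the excision commutator supported on $A_\delta$ with $L^2$ norm $\lesssim\delta^{\frac{n-4}{2}}$; the remaining task is to bound $\|\Delta v\|_{L^2(A_\delta)}\cdot\delta^{\frac{n-4}{2}}\to0$. This is the delicate point: a priori $\Delta v$ need not be square-integrable up to $S$, and one must invoke the local control of $v$ (equivalently, of $\Delta v$ on the annuli $A_\delta$) near $S$ furnished by Lemma \ref{lemma v integrable} and the equation, which is just strong enough to beat the $\delta^{\frac{n-4}{2}}$ loss when $n\ge5$; at the natural singular rate $v\lesssim\operatorname{dist}(\cdot,S)^{-\frac{n-4}{2}}$ one indeed has $\|\Delta v\|_{L^2(A_\delta)}=O(1)$. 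Combining the three convergences yields $\int\Delta v\,\Delta\varphi=\lim_j\int\Delta v\,\Delta\varphi_j=\lim_j\int k^{\frac{n+4}{2}}g(k^{-\frac{n-4}{2}}v)\,\varphi_j=\int k^{\frac{n+4}{2}}g(k^{-\frac{n-4}{2}}v)\,\varphi$, which is the assertion.
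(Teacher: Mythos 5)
There is a genuine gap precisely at the point you yourself flag as ``the main obstacle.'' Your strategy requires the bound $\|\Delta v\|_{L^2(A_\delta)}\cdot \delta^{(n-4)/2}\to 0$, which you would deduce from $\|\Delta v\|_{L^2(A_\delta)}=O(1)$. But no such control is available from the hypotheses or from Lemma~\ref{lemma v integrable}: that lemma only gives $k^{(n+4)/2}g(k^{-(n-4)/2}v)\in L^1_{\rm loc}$, and away from $S$ an $L^{2n/(n+4)}$ bound; neither yields square-integrability of $\Delta v$ up to $S$, nor even any quantitative growth rate on the annuli $A_\delta$. The heuristic ``at the natural singular rate $v\lesssim \operatorname{dist}(\cdot,S)^{-(n-4)/2}$'' is exactly what one is not entitled to at this stage of the argument --- it is a conclusion of the theory being developed, not a hypothesis --- and even granted such a bound on $v$ it would not by itself give the matching two-derivatives-worse estimate on $\Delta v$.

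The paper's proof circumvents this by a different device. Instead of estimating $\int \Delta v\,(\text{commutator})$ directly by Cauchy--Schwarz, it integrates by parts (legitimately, because the commutator is supported in the annulus where $\varphi$ is smooth and compactly supported) to rewrite the offending integral as $\int_{A_\delta} v\bigl(4\sum_{ij}\partial_{ij}\varphi\,\partial_{ij}\eta_\epsilon + 4\nabla\varphi\cdot\nabla\Delta\eta_\epsilon+\varphi\,\Delta^2\eta_\epsilon\bigr)$; all second derivatives are now shifted off $v$. This is then controlled via Hölder with exponents $q$ and $q/(q-1)$, using the assumed lower bound $k\gtrsim\epsilon^{-1}$ on $A_\epsilon$, the growth condition $g(t)\geq ct^q$ to trade $v^q$ for $g$, and the $L^1_{\rm loc}$ bound of Lemma~\ref{lemma v integrable}. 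The exponent of $\epsilon$ that comes out positive is exactly \eqref{exponent of epsilon equivalence}, and this is what makes the commutator terms vanish. Your proof would go through if you replaced the Cauchy--Schwarz step on $A_\delta$ by this integration-by-parts manoeuvre; without it, the argument does not close. The rest of your reduction (truncation at infinity via Hardy/Hardy--Rellich, handling of the nonlinear term by $L^1_{\rm loc}$ integrability and boundedness of $\varphi$ near $S$, mollification) is sound and parallels the paper.
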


\begin{proof}
We begin by localizing the problem. We choose non-negative $C^\infty$ functions $\chi_0$, $\chi_a$ and $\chi_\infty$ such that $\chi_0+\chi_a+\chi_\infty\equiv 1$ on $\R^n$ and such that $\chi_0$ and $\chi_a$ are $\equiv 1$ near the points $0$ and $a$, respectively, and both have compact support.

Given $\varphi$ as in the lemma, it clearly suffices to prove the theorem for each of the functions $\chi_0\varphi$, $\chi_a\varphi$ and $\chi_\infty\varphi$. Note that all three functions belong to $\dot H^2(\R^n)$ and are harmonic near $S$.

The identity for $\chi_\infty\varphi$ follows by a straightforward approximation argument using the fact that on the support of $\chi_\infty\varphi$, $\Delta v$ is in $L^2$ and, by Lemma \ref{lemma v integrable}, $k^{\frac{n+4}{2}} g(k^{-\frac{n-4}{2}}v)$ is in $L^\frac{2n}{n+4}$.

The argument for $\chi_a\varphi$ is the same as that for $\chi_0\varphi$, so we focus on the latter. To ease notation, we write $\varphi$ instead of $\chi_0\varphi$ and assume that $\varphi$ has compact support not containing $a$ and is harmonic near $0$. By harmonicity, $\varphi$ is $C^\infty$ near zero and, in particular, $\varphi$ and all its derivatives are bounded in a ball $B$ near zero.

For $\epsilon >0$ so small that $B(0, 2 \epsilon) \subset B$, fix $\eta_\epsilon \in C^\infty(\R^n)$ such that
\begin{equation}
\label{definition eta cutoff function} 
\eta_\epsilon (x)
\begin{cases}
\equiv 0 & \text{ if } |x| \leq \epsilon, \\
\equiv 1 & \text{ if } |x| \geq 2 \epsilon, \\
\end{cases}
\end{equation}
and $|D^k \eta_\epsilon| \lesssim \epsilon^{-k}$ for $k = 1,2,3,4$. We can now test the equation for $v$ with $\varphi \eta_\epsilon$, which is a valid test function, since it is in $\dot H^2(\R^n)$ and supported away from $S$. We obtain
\[ \int_{\R^n} \Delta v \Delta (\varphi \eta_\epsilon) = \int_{\R^n} k^\frac{n+4}{2} g(k^{-\frac{n-4}{2}} v) \varphi \eta_\epsilon. \]
As $\epsilon\to 0$, the right hand side tends to $\int_{\R^n} k^\frac{n+4}{2} g(k^{-\frac{n-4}{2}} v) \varphi$ by dominated convergence, using $\varphi \in L^\infty(B)$ and Lemma \ref{lemma v integrable}. To evaluate the left hand side, we write 
\begin{equation}
\label{three terms delta} \Delta (\varphi \eta_\epsilon) = \eta_\epsilon \Delta \varphi + 2 \nabla \varphi \cdot \nabla \eta_\epsilon + \varphi \Delta \eta_\epsilon \,.
\end{equation}
For the first term on the right side, since $\Delta \varphi = 0$ in $\{\eta_\epsilon\neq 1\}$, we have
\[ \int_{\R^n} (\Delta v \Delta \varphi) \eta_\epsilon = \int_{\R^n} \Delta v \Delta \varphi \,.\]
Therefore, to finish the proof, it remains to show that 
\begin{equation}
\label{remainder terms tend to zero} \lim_{\epsilon \to 0} \int_{\R^n} \Delta v (2 \nabla \varphi \cdot \nabla \eta_\epsilon + \varphi \Delta \eta_\epsilon) = 0. 
\end{equation}
Using the facts that $\varphi \in \dot H^2(\R^n)$ and that $\Delta \varphi \equiv 0$ on $\text{supp}(\nabla \eta_\epsilon) \cup \text{supp}(\Delta \eta_\epsilon)  \subset \{ \epsilon < |x| < 2 \epsilon\} \subset B$, we obtain from integration by parts that 
\[ \int_{\R^n} \Delta v (2 \nabla \varphi \cdot \nabla \eta_\epsilon + \varphi \Delta \eta_\epsilon) = \int_{\{ \epsilon < |x| < 2 \epsilon\}} v \big( (4 \sum_{i,j =1}^n \partial_{ij} \varphi \partial_{ij} \eta_\epsilon) + 4  \nabla \varphi \cdot \nabla \Delta \eta_\epsilon + \varphi \Delta^2 \eta_\epsilon \big). \]
Similarly to the proof of Lemma \ref{lemma v integrable}, since $k \gtrsim \epsilon^{-1}$ on $\{ \epsilon < |x| < 2 \epsilon \}$ and since $\partial_{ij} \varphi$ is bounded on  $\{ \epsilon < |x| < 2 \epsilon \}$, we have
\begin{align*} 
 &\Big | \int_{\{ \epsilon < |x| < 2 \epsilon\}} v \sum_{i,j =1}^n \partial_{ij} \varphi \partial_{ij} \eta_\epsilon \Big | \\
  & \lesssim \epsilon^{-\frac{n-4}{2} + \frac{n+4}{2q}} \big( \int_{\{ \epsilon < |x| < 2 \epsilon\}} k^\frac{n+4}{2}(1 + g(k^{-\frac{n-4}{2}} v)) \big)^\frac{1}{q}  \big( \int_{\{ \epsilon < |x| < 2 \epsilon\}} \sum_{i,j=1}^n |\partial_{ij} \eta_\epsilon|^\frac{q}{q-1} \big)^\frac{q-1}{q} 
\end{align*}
by Hölder's inequality. From Lemma \ref{lemma v integrable} and the bound $|D^2 \eta_\epsilon| \lesssim \epsilon^{-2}$, we infer that
\begin{align*} 
 \Big | \int_{\{ \epsilon < |x| < 2 \epsilon\}}  v \sum_{i,j =1}^n \partial_{ij} \varphi \partial_{ij} \eta_\epsilon \Big|  \lesssim \epsilon^{-2 + \frac{n(q-1)}{q} -\frac{n-4}{2} + \frac{n+4}{2q}}.
\end{align*}
By \eqref{exponent of epsilon equivalence}, we have $-2 + \frac{n(q-1)}{q} -\frac{n-4}{2} + \frac{n+4}{2q} > 0$ and we conclude that 
\[ \lim_{\epsilon \to 0} \int_{\R^n}  v  \sum_{i,j =1}^n \partial_{ij} \varphi \partial_{ij} \eta_\epsilon  = 0. \]

By an analogous argument, using boundedness of $\nabla \varphi$ and $\varphi$ on $B$ and the bounds $|D^k \eta_\epsilon| \leq \epsilon^{-k}$ for $k = 3,4$, one can establish that
\[  \lim_{\epsilon \to 0} \int_{\R^n}  v (\nabla \varphi \cdot \nabla \Delta \eta_\epsilon) = \lim_{\epsilon \to 0} \int_{\R^n}  v  \varphi \Delta^2 \eta_\epsilon  = 0 \,. \]
The proof of \eqref{remainder terms tend to zero}, and therefore of Lemma \ref{lemma v weak solution}, is complete. 
\end{proof}


\subsection{Proof of Theorem \ref{theorem moving planes}}
\label{subsection moving planes}

In this subsection we prove Theorem~\ref{theorem moving planes} using the method of moving planes in a variant relying mostly on integral estimates, the crucial ones being derived in Lemma \ref{lemma 4.4} below. The use of such bounds in the context of the method of moving planes goes back at least to \cite{Te}. In the present context of a fourth-order equation, this strategy is however much harder to implement because more regularity is required from admissible test functions, compare Lemma \ref{lemma v weak solution}. We achieve this by a careful regularization procedure together with odd reflection across the hyperplane $\{ x_1 = \lambda \}$. This is carried out in the proofs of Lemmas \ref{lemma inequality for w} and \ref{lemma 4.4} below.

\textbf{Conventions and notations.}  Again we assume that $g$ and $v$ satisfy the assumptions of Theorem \ref{theorem moving planes} and that \eqref{eq:symmeq} holds in the weak sense. Recall moreover that we denote $S = \{0 ,a\} \subset \R^n$ for some fixed $a \in \R^n \setminus \{0\}$. 

Since the assumptions and the conclusion of the theorem are invariant under rotations, we may assume that $a_1=0$ and $H=\{x_1=0\}$.

For any number $\lambda < 0$, we introduce the moving planes notation by letting $\Sigma_\lambda = \{ x_1 > \lambda \}$, $x^{\lambda} = (2 \lambda - x_1, x_2, ..., x_n)$, $v_\lambda(x) = v(x^\lambda)$ and $k_\lambda(x) = k(x^\lambda)$. (This should not be confused with the function $k_z$ from Subsection \ref{sec:kelvin}.) Moreover, on $\SL$, we define the difference function $w^{(\lambda)} = v - v_\lambda$. We will consider this function only in the half-space $\Sigma_\lambda$. When $\lambda$ is understood, we will often abbreviate this function by $w$.

For any function $u$, we denote by $u_- := \max \{ 0, -u\}$ its negative part (note that with our convention $u_- \geq 0$). 

The following lemma lies at the core of the moving planes argument used to prove Theorem \ref{theorem moving planes}. 

We recall that by definition $\dot W^{1,\frac{2n}{n-2}}_0(\SL)$ is the completion of $C_0^\infty(\SL)$ with respect to $\|\nabla u\|_{\frac{2n}{n-2}}$, see e.g. \cite{Mazya}.

\begin{lemma}
\label{lemma inequality for w}
Let $v$, $g$ and $k$ fulfill the assumptions of Theorem \ref{theorem moving planes} and let $w^{(\lambda)} = v - v_\lambda$ on $\Sigma_\lambda$. Define, for $x \in \SL$,
\begin{equation}
\label{definition V}
V^{(\lambda)}(x) = k_\lambda(x)^4 \; \frac{g(v(x) k_\lambda(x)^{-\frac{n-4}{2}}) - g(v_\lambda(x) k_\lambda(x)^{-\frac{n-4}{2}})}{v(x) k_\lambda(x)^{-\frac{n-4}{2}} - v_\lambda(x) k_\lambda(x)^{-\frac{n-4}{2}}}. 
 \end{equation}
 Then the following holds. 
\begin{enumerate}
\item[(a)] We have $V^{(\lambda)} \geq 0$ and $V^{(\lambda)} \in L^\frac{n}{4}(\{ w^{(\lambda)} < 0\})$ for all $\lambda < 0$. Moreover,
$$
\lim_{\lambda \to -\infty} \int_{\{w^{(\lambda)} < 0\}} \left( V^{(\lambda)} \right)^\frac{n}{4} = 0 \,.
$$ 
\item[(b)] 
Let $0\leq \psi \in \dot H^2(\SL) \cap \dot W^{1,\frac{2n}{n-2}}_0(\SL)$ with $\Delta \psi \equiv 0$ in a neighborhood of $S$. Then
\begin{equation}
\label{inequality for w} 
\int_{\Sigma_\lambda} \Delta w^{(\lambda)} \Delta \psi \geq - \int_{\Sigma_\lambda} V^{(\lambda)} (w^{(\lambda)})_- \psi .
\end{equation} 
\end{enumerate}
\end{lemma}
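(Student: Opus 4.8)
The plan is to prove the two parts essentially independently, with part (a) being a direct computation using the hypotheses on $g$ and the integrability of $k$, and part (b) being the genuinely hard part, requiring a regularization-plus-odd-reflection argument so that Lemma \ref{lemma v weak solution} can be applied.

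\textbf{Part (a).} First I would observe that $V^{(\lambda)} \geq 0$ is immediate from the fact that $g$ is increasing (which follows from $g'>0$, a consequence of $g(t)/t < g'(t)$ and $g>0$): the numerator and denominator in \eqref{definition V} always have the same sign. For the $L^{n/4}$ bound, the mean value theorem gives $V^{(\lambda)}(x) = k_\lambda^4 \, g'(\xi)$ for some $\xi$ between $v k_\lambda^{-(n-4)/2}$ and $v_\lambda k_\lambda^{-(n-4)/2}$; on the set $\{w^{(\lambda)}<0\}$ we have $v < v_\lambda$, so $\xi < v_\lambda k_\lambda^{-(n-4)/2}$. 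Using $g'(t) \leq \frac{n+4}{n-4}\frac{g(t)}{t}$ together with $g(t) \lesssim t^{(n+4)/(n-4)}+1$ (established in the proof of Lemma \ref{lemma v integrable}) and the monotonicity of $g(t)/t$ — which is what $g(t)/t<g'(t)$ gives — I would bound $g'(\xi) \lesssim (v_\lambda k_\lambda^{-(n-4)/2})^{4/(n-4)} + k_\lambda^{4/(n-4)} \cdot (\text{something bounded})$, so that $V^{(\lambda)} \lesssim v_\lambda^{4/(n-4)} + k_\lambda^4$ on $\{w^{(\lambda)}<0\}$. Hence $(V^{(\lambda)})^{n/4} \lesssim v_\lambda^{2n/(n-4)} + k_\lambda^n$, and $\int_{\{w^{(\lambda)}<0\}} (V^{(\lambda)})^{n/4} \lesssim \int_{\{w^{(\lambda)}<0\}} (v_\lambda^{2n/(n-4)} + k_\lambda^n)$. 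Now I would change variables $x \mapsto x^\lambda$, turning this into $\int_{\{x_1<\lambda\} \cap \{v < v_\lambda\}} (v^{2n/(n-4)} + k^n)$, and note that as $\lambda \to -\infty$ this is the integral over a shrinking family of sets near the "outer" region, which is a positive distance from $S$ for $\lambda$ very negative; since $v \in L^{2n/(n-4)}$ and $k \in L^n$ away from $S$ by hypothesis, dominated convergence forces the limit to $0$. (One should be slightly careful that the relevant region $\{x_1<\lambda\}$ does stay away from $S=\{0,a\}$ for $\lambda \ll 0$, which holds since $0$ and $a$ both have first coordinate $0$.)

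\textbf{Part (b).} This is where the work lies. The formal computation is: testing \eqref{eq:symmeq} against $\psi$ and against its reflection $\psi(\cdot^\lambda)$ on $\SL$ and subtracting gives $\int_{\SL} \Delta w^{(\lambda)} \Delta \psi = \int_{\SL}\big(k^{(n+4)/2}g(k^{-(n-4)/2}v) - k_\lambda^{(n+4)/2}g(k_\lambda^{-(n-4)/2}v_\lambda)\big)\psi$; then using that $k$ is symmetric-decreasing (so $k \geq k_\lambda$ on $\SL$) and $g$ is increasing and the subcriticality bound, one shows the right-hand side is $\geq -\int_{\SL} V^{(\lambda)}(w^{(\lambda)})_-\psi$ — the point being that on $\{w^{(\lambda)} \geq 0\}$ the integrand is nonnegative, and on $\{w^{(\lambda)}<0\}$ one factors out $V^{(\lambda)}$. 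The obstacle is that to run this one needs $\psi$ and $\psi(\cdot^\lambda)$, extended by odd or even reflection to all of $\R^n$, to be admissible test functions in the sense of Lemma \ref{lemma v weak solution}, i.e. in $\dot H^2(\R^n)$ and harmonic near $S$. Odd reflection of an $\dot H^2$ function across $\{x_1=\lambda\}$ need not be $\dot H^2$ (the second normal derivative jumps); this is why the hypothesis includes $\psi \in \dot W^{1,2n/(n-2)}_0(\SL)$ and why the paper flags "careful regularization." The plan: approximate $\psi$ by $\psi_\delta \in C_0^\infty(\SL)$ in the $\dot H^2(\SL)\cap \dot W^{1,2n/(n-2)}_0$ norm; for each $\delta$, the odd extension $\tilde\psi_\delta$ of $\psi_\delta$ is smooth (it vanishes to infinite order near $\{x_1=\lambda\}$ in the relevant sense — actually one needs $\psi_\delta$ compactly supported inside the open half-space, so odd reflection across the boundary is automatically smooth) and compactly supported away from $S$, hence admissible; apply Lemma \ref{lemma v weak solution} to $v$ with test function $\tilde\psi_\delta$ and also the corresponding identity after reflection, subtract, and obtain \eqref{inequality for w} with $\psi_\delta$ in place of $\psi$. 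Then pass to the limit $\delta\to 0$: the left side converges because $\Delta w^{(\lambda)} \in L^2(\SL')$ for any $\SL'$ a positive distance from $S$, and near $S$ one uses that $\Delta\psi \equiv 0$ there (so only $\psi$, not $\Delta\psi$, enters near $S$, and one controls the cross terms exactly as in the proof of Lemma \ref{lemma v weak solution}, via Hölder against the curvature of a cutoff); the right side converges using part (a) ($V^{(\lambda)} \in L^{n/4}$), Hölder with exponents matching $\dot W^{1,2n/(n-2)} \hookrightarrow L^{2n/(n-4)}$ (so $(w^{(\lambda)})_- \in L^{2n/(n-4)}$ locally and $\psi_\delta \to \psi$ in $L^{2n/(n-2)}$), and the convergence $\psi_\delta \to \psi$.

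\textbf{Main obstacle.} The delicate point is making the reflection argument rigorous when $\psi$ only vanishes on $\{x_1=\lambda\}$ (Dirichlet, via the $\dot W^{1,\cdot}_0$ completion) but its normal derivative and Laplacian need not — so the odd extension of $\psi$ itself is \emph{not} in $\dot H^2(\R^n)$, and the approximation must be by functions compactly supported strictly inside $\SL$, for which reflection is harmless but for which one must then verify that every limiting quantity (especially $\int_{\SL}\Delta w^{(\lambda)}\Delta\psi$ and the boundary/near-$S$ error terms) is stable. Matching the correct function spaces so that all integrals on the right-hand side of \eqref{inequality for w} converge — in particular checking that $V^{(\lambda)}(w^{(\lambda)})_-\psi \in L^1(\SL)$ via the triple Hölder $\tfrac{4}{n}+\tfrac{n-4}{2n}+\tfrac{n-4}{2n}$... wait, $\tfrac{4}{n}+\tfrac{n-2}{2n}$ is not $1$; the correct split is $V^{(\lambda)} \in L^{n/4}$, $(w^{(\lambda)})_- \in L^{2n/(n-4)}$ locally (being a difference of $\dot H^2$-type functions), and $\psi \in L^{2n/(n-4)}$ — and keeping careful track of which norms survive the localization near versus away from $S$, is the part I expect to require the most care.
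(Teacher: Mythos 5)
Part (a) matches the paper's proof (mean value theorem plus the subcriticality bound $g'(t)\lesssim 1+t^{8/(n-4)}$, change of variables, dominated convergence); the intermediate exponent you wrote for $V^{(\lambda)}$ should be $v_\lambda^{8/(n-4)}+k_\lambda^4$, not $v_\lambda^{4/(n-4)}+k_\lambda^4$, but the exponent in $(V^{(\lambda)})^{n/4}$ that you then use is correct, so this is only a typo.

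For part (b) there is a genuine gap in your regularization strategy, and your diagnosis of what makes the argument delicate is off target. First, the claim that $\psi_\delta\in C_0^\infty(\SL)$ has odd extension ``compactly supported away from $S$'' is false: since $\lambda<0$ and both $0$ and $a$ have first coordinate $0$, the singular set $S=\{0,a\}$ lies in the \emph{interior} of $\SL$, so a test function compactly supported in the open half-space can and typically does contain $S$ in its support. Second, and more importantly, approximating $\psi$ by generic $\psi_\delta\in C_0^\infty(\SL)$ destroys exactly the hypothesis $\Delta\psi\equiv 0$ near $S$ that makes Lemma \ref{lemma v weak solution} available. The right-hand side $k^{(n+4)/2}g(k^{-(n-4)/2}v)$ is known only to be $L^1$ near $S$ (Lemma \ref{lemma v integrable}), so one cannot pass to the limit $\delta\to 0$ in the weak formulation unless the approximants themselves are harmonic near $S$; invoking the harmonicity of the limiting $\psi$ at the end of a limit argument whose validity depends on harmonicity of the $\psi_\delta$ is circular. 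Thirdly, the obstacle you flag — that the odd extension of $\psi$ may fail to be $\dot H^2$ because of a jump in the second normal derivative across $\{x_1=\lambda\}$ — is not an obstacle: the Dirichlet condition encoded in $\psi\in\dot W^{1,2n/(n-2)}_0(\SL)$ guarantees that the weak Laplacian of the odd extension is precisely the odd extension of $\Delta\psi$ with no distributional layer on the hyperplane, and a jump in $\partial_1^2$ is bounded and hence still $L^2$. The paper's route is in fact much shorter and no regularization is needed at all: define $\varphi$ to be the odd extension of $\psi$, note directly that $\varphi\in\dot H^2(\R^n)$ and that $\varphi=\psi$ hence $\Delta\varphi\equiv 0$ near $S$ (because $S$ is interior to $\SL$), and apply Lemma \ref{lemma v weak solution} with test function $\varphi$. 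A change of variables then gives
\begin{equation*}
\int_{\SL}\Delta w\,\Delta\psi = \int_{\R^n}\Delta v\,\Delta\varphi = \int_{\SL}\Bigl(k^{\frac{n+4}{2}}g(k^{-\frac{n-4}{2}}v)-k_\lambda^{\frac{n+4}{2}}g(v_\lambda k_\lambda^{-\frac{n-4}{2}})\Bigr)\psi,
\end{equation*}
and one then inserts and subtracts $k_\lambda^{(n+4)/2}g(vk_\lambda^{-(n-4)/2})$, uses the monotonicity in $s$ of $s^{(n+4)/2}g(vs^{-(n-4)/2})$ coming from the upper bound on $g'$ together with $k\geq k_\lambda$ on $\SL$, and finally $Vw\psi\geq -Vw_-\psi$. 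Your formal computation on this last step is correct; the issue is how you make $\varphi$ an admissible test function.
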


\begin{proof}
As a rule, we will abbreviate $w=w^{(\lambda)}$ and $V=V^{(\lambda)}$.

\textit{Proof of (a).   }
Firstly, since $k >0$ and $g' \geq 0$ by \eqref{conditions on g}, we have $V \geq 0$. 

Secondly, as we have seen in the proof of Lemma \ref{lemma v integrable}, $g(t)
\lesssim t^\frac{n+4}{n-4}$ for $t\geq 1$. Reinserting this bound into the assumed upper bound on $g'$ and using the fact that $g'$ is bounded on $(0,1]$ (since $\lim_{t \to 0} g'(t)$ exists and is finite), we obtain the bound 
\[ g'(t) \lesssim 1 + t^\frac{8}{n-4}. \]
Therefore, by the mean value theorem,
\begin{equation}
\label{difference quotient bound on g} 
\frac{g(t)  - g(s)}{t -s} \lesssim 1 + t^\frac{8}{n-4}  \quad \text{ for all } 0 < s < t. 
\end{equation}
Applying \eqref{difference quotient bound on g} with $t = v_\lambda(x) k_\lambda(x)^{-\frac{n-4}{2}}$ and $s = v(x) k_\lambda(x)^{-\frac{n-4}{2}}$ and noticing that $t > s$ whenever $w<0$, we can bound
\begin{align*}
&\int_{\{ w < 0\}} V^\frac{n}{4} \lesssim \int_{\{ w < 0\}} k_\lambda^n + \int_{\{ w < 0\}} v_\lambda^\frac{2n}{n-4} 
\leq  \int_{\{ x_1 < \lambda\}} \left( k^n + v^\frac{2n}{n-4}\right).
\end{align*}
The right side is finite by the integrability assumptions on $k$ and $v$ and, by dominated convergence, tends to zero as $\lambda\to-\infty$.

\textit{Proof of (b).    }
Let $\psi \in \dot H^2(\SL) \cap \dot W^{1,\frac{2n}{n-2}}_0(\SL)$ fulfill the assumptions of Lemma \ref{lemma inequality for w}. Then the odd extension of $\psi$,
\[ \varphi(x) = 
\begin{cases}
\psi(x) & \text{ if } x \in \SL \,, \\
-\psi (x^\lambda) & \text{ if } x \in \SL^c \,,
\end{cases}
\]
belongs to $\dot H^2(\R^n)$ and is harmonic near $S$. Therefore, Lemma \ref{lemma v weak solution} and a straightforward change of variables yield
\begin{align*}
& \int_{\SL} \Delta w \Delta \psi = \int_{\R^n} \Delta v\Delta\varphi = \int_{\R^n} k^{\frac{n+4}{2}} g(k^{-\frac{n-4}{2}}v)\varphi \\
=& \int_{\SL} \Big( k^\frac{n+4}{2} g(k^{-\frac{n-4}{2}} v) - k_\lambda^\frac{n+4}{2} g(v_\lambda k_\lambda^{-\frac{n-4}{2}}) \Big ) \psi \\
= & \int_{\SL} V w \psi + \int_{\SL} \Big( k^\frac{n+4}{2} g(k^{-\frac{n-4}{2}} v) - k_\lambda^\frac{n+4}{2} g(v k_\lambda^{-\frac{n-4}{2}}) \Big ) \psi \\
 \geq & \int_{\SL} V w \psi \geq - \int_{\SL} V w_- \psi.
\end{align*}
In the first inequality, we used $k \geq k_\lambda$ on $\SL$ together with the fact that the function $s \mapsto s^\frac{n+4}{2} g(v s^{-\frac{n-4}{2}})$ is non-decreasing on $(0, \infty)$ for every fixed $v > 0$, which follows from the inequality $g'(t) \leq \frac{n+4}{n-4} \frac{g(t)}{t}$ in assumption \eqref{conditions on g}. In the second inequality, we used $V \geq 0$ and $\psi \geq 0$. 
\end{proof}

We can now derive the crucial technical ingredient for the moving planes method from inequality \eqref{inequality for w}. 

\begin{lemma}
\label{lemma 4.4}
Let $v$, $g$ and $k$ fulfill the assumptions of Theorem \ref{theorem moving planes} and let $w^{(\lambda)} = v - v_\lambda$ on $\SL$. Let $V^{(\lambda)}$ be defined by \eqref{definition V}. Then there is $\epsilon_0 > 0$, depending only on $n$, such that if $|\{ w^{(\lambda)}<0\}|>0$, then $\;\int_{\{w^{(\lambda)} < 0\}} \left( V^{(\lambda)} \right)^\frac{n}{4} \geq \epsilon_0$.
\end{lemma}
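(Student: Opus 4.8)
The plan is to test the differential inequality \eqref{inequality for w} against $\psi = w_-$ (or rather a suitable regularization of it) and then run a Sobolev-inequality argument. The key point is that $w_- = (v - v_\lambda)_-$ is supported exactly on $\{w^{(\lambda)} < 0\}$, and on that set $-w = w_-$, so formally plugging $\psi = w_-$ into \eqref{inequality for w} gives
\[
\int_{\SL} |\Delta w_-|^2 \leq \int_{\{w<0\}} V^{(\lambda)} w_-^2 \,.
\]
To the right-hand side I would apply Hölder's inequality with exponents $\tfrac n4$ and $\tfrac{n}{n-4}$, obtaining the bound
\[
\int_{\{w<0\}} V^{(\lambda)} w_-^2 \leq \Bigl(\int_{\{w<0\}}(V^{(\lambda)})^{n/4}\Bigr)^{4/n}\Bigl(\int_{\SL} w_-^{2n/(n-4)}\Bigr)^{(n-4)/n}\,,
\]
and then the Sobolev inequality $\|w_-\|_{2n/(n-4)}^2 \leq C_n \|\Delta w_-\|_2^2$ (valid in $\dot H^2(\R^n)$, hence for the odd extension / zero extension of $w_-$ from $\SL$). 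Combining the three displays, either $\int_{\SL}|\Delta w_-|^2 = 0$ — which, together with $w_- \in \dot H^2$, forces $w_- \equiv 0$ and hence $|\{w^{(\lambda)}<0\}| = 0$, contradicting the hypothesis — or we may cancel a positive factor of $\int_{\SL}|\Delta w_-|^2$ to conclude $\int_{\{w<0\}}(V^{(\lambda)})^{n/4} \geq C_n^{-n/4} =: \epsilon_0$, a constant depending only on $n$.

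The genuine work, and the main obstacle, is justifying that $\psi = w_-$ is an admissible test function in Lemma \ref{lemma inequality for w}: that lemma requires $\psi \in \dot H^2(\SL) \cap \dot W^{1,2n/(n-2)}_0(\SL)$ with $\Delta\psi \equiv 0$ near $S$, and none of these is obvious for $w_-$ itself. First, $w_-$ need not be harmonic near $S$; this I would fix by multiplying by a cutoff $\eta_\epsilon$ that vanishes on a ball of radius $\epsilon$ around each point of $S$ and equals $1$ outside radius $2\epsilon$, with $|D^k\eta_\epsilon|\lesssim\epsilon^{-k}$, exactly as in Lemmas \ref{lemma v integrable} and \ref{lemma v weak solution}, and then show that the commutator terms arising from $\Delta^2(\eta_\epsilon \,\cdot\,)$ vanish as $\epsilon\to 0$ using the integrability of $v$ and of $k^{(n+4)/2}g(k^{-(n-4)/2}v)$ near $S$ established in Lemma \ref{lemma v integrable}, together with $k\gtrsim \text{dist}(\cdot,S)^{-1}$. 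Second, one must check that $w_-$ (localized away from infinity if necessary by a similar cutoff, using the decay $k\sim|x|^{-2}$ and $v\in L^{2n/(n-4)}$ away from $S$) actually lies in $\dot H^2$; here the natural route is to note that on $\{w<0\}$ one has $0 < v < v_\lambda$, so $w_-$ is controlled pointwise by $v_\lambda$, and to use $\Delta w_- = -1_{\{w<0\}}\Delta w$ a.e., combined with the fact that $\Delta v, \Delta v_\lambda \in L^2_{\rm loc}(\R^n\setminus S)$ and are square-integrable away from $S$ by the assumptions of Theorem \ref{theorem moving planes}. One should also verify $w_- \in \dot W^{1,2n/(n-2)}_0(\SL)$, which follows from $w_- \in \dot H^2(\SL)$ by Sobolev embedding ($\|\nabla w_-\|_{2n/(n-2)} \lesssim \|\Delta w_-\|_2$) and the boundary behaviour $w_- = 0$ on $\partial\SL = \{x_1 = \lambda\}$.

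Once admissibility is in hand, the only remaining subtlety is the passage to the limit $\epsilon \to 0$ in the testing identity: after testing \eqref{inequality for w} with $\eta_\epsilon^2 w_-$ (squaring the cutoff so that it can be distributed onto both factors, as in the $m$-power trick of Lemma \ref{lemma v integrable}), one gets
\[
\int_{\SL} |\Delta(\eta_\epsilon w_-)|^2 \leq \int_{\{w<0\}} V^{(\lambda)} \eta_\epsilon^2 w_-^2 + (\text{commutator terms}) \,,
\]
and the commutator terms, involving $\nabla\eta_\epsilon \cdot \nabla w_-$, $w_-\Delta\eta_\epsilon$ and their $L^2$ pairings with $\Delta(\eta_\epsilon w_-)$, are shown to tend to $0$ by Cauchy–Schwarz together with $\int_{\{\epsilon<|x-p|<2\epsilon\}}|\nabla w_-|^2, \int_{\{\epsilon<|x-p|<2\epsilon\}}\epsilon^{-2}w_-^2 \to 0$ for $p \in S$ — the latter again by Hardy–Rellich-type estimates and $w_- \in \dot H^2$. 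Then $\eta_\epsilon w_- \to w_-$ in $\dot H^2$ and $\int V^{(\lambda)}\eta_\epsilon^2 w_-^2 \to \int V^{(\lambda)}w_-^2$ by monotone convergence (using $V^{(\lambda)}w_-^2 \in L^1$, which is precisely the product of the Hölder estimate above and part (a) of Lemma \ref{lemma inequality for w}), yielding the clean inequality $\int_{\SL}|\Delta w_-|^2 \leq \int_{\{w<0\}}V^{(\lambda)}w_-^2$ from which the Sobolev argument concludes.
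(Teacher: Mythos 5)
Your plan to test the inequality \eqref{inequality for w} against $\psi = w_-$ (or a cutoff thereof) has a fatal gap at its very first step: in general $w_-$ is \emph{not} in $\dot H^2$, so it is never an admissible test function, no matter how one regularizes near $S$ and at infinity. The identity $\Delta w_- = -1_{\{w<0\}}\Delta w$, which you invoke, holds pointwise a.e.\ but \emph{not} distributionally. While taking the negative part preserves $H^1$ (one has $\nabla w_- = -1_{\{w<0\}}\nabla w$ a.e.\ \emph{and} distributionally), at second order the operation fails: $\nabla w_-$ jumps from $-\nabla w$ to $0$ across the free boundary $\{w=0\}$ at points where $\nabla w \neq 0$, so the distributional Hessian of $w_-$ contains a singular surface measure concentrated on $\{w=0\}$. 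Thus $\|\Delta w_-\|_{L^2}$ is generically infinite, the Sobolev inequality $\|w_-\|_{2n/(n-4)} \lesssim \|\Delta w_-\|_2$ is vacuous, and the left-hand side $\int |\Delta(\eta_\epsilon w_-)|^2$ in your regularized identity is not finite to begin with. This is precisely the structural obstruction that makes fourth-order moving-plane arguments harder than second-order ones, and it is not a mere technicality to be swept into the $\epsilon\to 0$ limit.

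The paper's proof circumvents this by \emph{factoring} the problem through the second-order quantity $(-\Delta w)_-$, which only needs to lie in $L^2$ rather than requiring $w_-\in H^2$. Concretely, it establishes the two inequalities \eqref{inequality w leq Delta w} and \eqref{inequality Delta w leq w}, namely $\|w_-\|_{2n/(n-4)} \lesssim \|(-\Delta w)_-\|_2$ and $\|(-\Delta w)_-\|_2 \lesssim \|V 1_{\{w<0\}}\|_{n/4}\, \|w_-\|_{2n/(n-4)}$, and composes them. Each is proved by building test functions $\psi_k$ via the Green's function of the Dirichlet Laplacian on the half-space $\Sigma_\lambda$ (formula \eqref{psi convolution}), applied to truncated versions of $-w_-^{(n+4)/(n-4)}$ and $-(-\Delta w)_-$ respectively, together with the Hardy--Littlewood--Sobolev inequality \eqref{hls bound for psi}. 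These $\psi_k$ are genuinely in $\dot H^2 \cap \dot W^{1,2n/(n-2)}_0$ and harmonic near $S$ by construction, so the admissibility issue never arises. Your Hölder-plus-Sobolev endgame is the right instinct, but it must be routed through $(-\Delta w)_-$ rather than applied directly to $w_-$.
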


\begin{proof}
We abbreviate $w=w^{(\lambda)}$ and $V=V^{(\lambda)}$. We claim that the assertion follows if we can prove the following two inequalities,
\begin{align}
\big( \int_{\SL} w_-^\frac{2n}{n-4}\big) ^\frac{n-4}{2n} & \lesssim \big( \int_{\SL} (-\Delta w)_-^2 \big) ^\frac{1}{2}, \label{inequality w leq Delta w} \\ 
\big( \int_{\SL} (-\Delta w)_-^2 \big) ^\frac{1}{2} & \lesssim \big( \int_{\{w < 0\}} V^\frac{n}{4} \big) ^\frac{4}{n} \big( \int_{\SL} w_-^\frac{2n}{n-4}\big) ^\frac{n-4}{2n}, \label{inequality Delta w leq w}
\end{align}
with implied constants depending only on $n$ and where
\[ (-\Delta w)_-^2 := ((-\Delta w)_-)^2. \]
Indeed, \eqref{inequality w leq Delta w} and \eqref{inequality Delta w leq w} together yield the bound 
  \[  \big( \int_{\SL} w_-^\frac{2n}{n-4}\big) ^\frac{n-4}{2n} \lesssim \big( \int_{\{w < 0\}} V^\frac{n}{4} \big) ^\frac{4}{n} \big( \int_{\SL} w_-^\frac{2n}{n-4}\big) ^\frac{n-4}{2n}. \]
If $w_- \nequiv 0$, we may divide by $ \big( \int_{\SL} w_-^\frac{2n}{n-4}\big) ^\frac{n-4}{2n} \neq 0$ to deduce the bound 
\[ 1 \lesssim \big( \int_{\{w < 0\}} V^\frac{n}{4} \big) ^\frac{4}{n}, \]
which concludes the proof of Lemma \ref{lemma 4.4}. Thus, it remains to prove inequalities \eqref{inequality w leq Delta w} and \eqref{inequality Delta w leq w}. 

Before proving these inequalities, let us note that the second factor on the right side of \eqref{inequality Delta w leq w} is finite since $w\geq -v_\lambda$ and $v_\lambda\in L^\frac{2n}{n-4}(\Sigma_\lambda)$ by our assumption on $v$. Moreover, by Lemma \ref{lemma inequality for w} the first factor on the right side of \eqref{inequality Delta w leq w} is finite and therefore it is part of the assertion of this inequality that $(-\Delta w)_- \in L^2(\Sigma_\lambda)$.

For any $f \in L^2(\Sigma_\lambda)$, we can define 
\begin{equation}
\label{psi convolution} 
\psi(x) := c_n  \int_{\SL} \big( \frac{1}{|x - y|^{n-2}} - \frac{1}{|x-y^\lambda|^{n-2}} \big) f (y), \qquad x \in \SL,
\end{equation}
with $c_n = \big((n-2)|\mathbb S^{n-1}|\big)^{-1}$. Then we have $\psi \in \dot H^2(\SL) \cap \dot W^{1,\frac{2n}{n-2}}_0(\SL)$ and $-\Delta \psi = f$ on $\SL$.  

Notice that $f \leq 0$ implies that $\psi \leq 0$ on $\SL$. Moreover, formula \eqref{psi convolution} and the Hardy--Littlewood--Sobolev inequality \cite[Theorem 4.3]{LiLo} imply the bound
\begin{equation}
\label{hls bound for psi} 
\| \psi \|_{L^r(\SL)} \lesssim \| f \|_{L^s(\SL)} 
\end{equation} 
for every pair of exponents $1< s < r <\infty$ related by $\frac{n-2}{n} + \frac{1}{s} = 1 + \frac{1}{r}$.  

We now give the proofs of the two inequalities \eqref{inequality w leq Delta w} and \eqref{inequality Delta w leq w}.

To prove \eqref{inequality w leq Delta w}, for every $k \in \N$ and $x \in \SL \setminus S$, set 
\[ f_k(x) = - w_-^\frac{n+4}{n-4}(x) 1_{\{ w_- \leq k \}}(x) 1_{\{\text{dist}(x, S) \geq k^{-1}, |x| \leq k \}}(x).
\]
Thus, $f_k$ is bounded and has compact support and, in particular, $f_k \in L^2(\SL)$. We consider $\psi_k$ associated to $f_k$ by \eqref{psi convolution}. Notice that $-\Delta \psi_k = f_k \equiv 0$ in a neighborhood of $S$. 

We have, by dominated (or monotone) convergence,
\[ \int_{\SL} w_-^\frac{2n}{n-4} =  \lim_{k \to \infty} \int_{\SL} w f_k = \lim_{k \to \infty} \int_{\SL} w (-\Delta \psi_k). \] 
Let us introduce a family of non-negative cutoff functions $\eta_\epsilon$ as in \eqref{definition eta cutoff function}, but now vanishing both near $0$ and $a$. Since $\psi_k$ and $w$ vanish on $\partial \SL$, integration by parts gives
\begin{equation}
\label{integration by parts with cutoff, lemma 4.4} \int_{\SL} w (-\Delta (\psi_k \eta_\epsilon)) = \int_{\SL} (-\Delta w) (\psi_k \eta_\epsilon)  \leq -\int_{\SL} (-\Delta w)_- (\psi_k \eta_\epsilon).
\end{equation}
Here the inequality holds simply because $\psi_k \leq 0$. As $\epsilon \to 0$, the right hand side of \eqref{integration by parts with cutoff, lemma 4.4} tends to $-\int_{\SL} (-\Delta w)_- \psi_k$, by monotone convergence. Moreover, since $\psi_k$ is harmonic in a neighborhood of $S$, we can argue as in the proof of Lemma \ref{lemma v weak solution} that the left hand side of \eqref{integration by parts with cutoff, lemma 4.4} tends to $\int_{\SL} w (-\Delta \psi_k)$ as $\epsilon \to 0$. We therefore have that 
\[ \int_{\SL} w (-\Delta \psi_k) \leq - \int_{\SL} (-\Delta w)_- \psi_k \leq \|(-\Delta w)_-\|_2 \| \psi_k\|_2 \lesssim  \|(-\Delta w)_-\|_2  \| f_k\|_\frac{2n}{n+4}. \]
The second inequality here is Hölder's inequality and the third one uses \eqref{hls bound for psi} with $r = 2$, $s = \frac{2n}{n+4}$. Letting $k \to \infty$, we find that 
\[ \int_{\SL} w_-^\frac{2n}{n-4} \lesssim  \|(-\Delta w)_-\|_2  \| w_-^\frac{n+4}{n-4}\|_\frac{2n}{n+4} =  \|(-\Delta w)_-\|_2  \| w_-\|_\frac{2n}{n-4}^\frac{n+4}{n-4}, \]
which implies inequality \eqref{inequality w leq Delta w} since $\| w_-\|_\frac{2n}{n-4}<\infty$ as we have noted before.

To prove \eqref{inequality Delta w leq w}, for every $k \in \N$ and $x \in \SL \setminus S$, set 
\[ f_k(x)= - (-\Delta w)_-(x) 1_{\{ (-\Delta w)_- \leq k \}} 1_{\{\text{dist}(x, S) \geq k^{-1}, |x| \leq k\}}.\]
Thus, $f_k$ is bounded and has compact support and, in particular, $f_k \in L^2(\SL)$. We consider the associated $\psi_k$ as above and notice that $-\Delta \psi_k = f_k \equiv 0$ in a neighborhood of $S$. 

We have, by monotone convergence,
\[ \int_{\SL} (-\Delta w)_-^2 = - \lim_{k \to \infty} \int_{\SL} \Delta w  f_k = \lim_{k \to \infty} \int_{\SL} \Delta w \Delta \psi_k. \] 

Since $- \psi_k \geq 0$ fulfills all the assumptions of Lemma \ref{lemma inequality for w}, by \eqref{inequality for w} we obtain
\begin{align*}
 \int_{\SL} \Delta w \Delta \psi_k \leq - \int_{\SL} V w_- \psi_k &\leq \|V 1_{\{w < 0\}} \|_\frac{n}{4} \|w_- \|_\frac{2n}{n-4} \|\psi_k\|_\frac{2n}{n-4} \\
 & \lesssim \|V 1_{\{w < 0\}} \|_\frac{n}{4} \|w_- \|_\frac{2n}{n-4} \| f_k\|_2, 
 \end{align*}
where we used Hölder's inequality followed by \eqref{hls bound for psi} again. Since 
\[ \int_{\SL} \Delta w \Delta \psi_k = \| f_k\|_2^2 <\infty, \]
we deduce that 
\[ \| f_k \|_2 \lesssim  \|V 1_{\{w < 0\}} \|_\frac{n}{4}  \|w_- \|_\frac{2n}{n-4} \]
for all $k \in \N$. Passing to the limit $k \to \infty$, we obtain inequality \eqref{inequality Delta w leq w} by monotone convergence. 
\end{proof}

We are finally in a position to prove Theorem \ref{theorem moving planes}. 
 
 \begin{proof}
[Proof of Theorem \ref{theorem moving planes}]
As we have already mentioned, by rotation invariance we may assume that $a_1=0$ and $H=\{x_1=0\}$.
For $\lambda < 0$, we consider the function $w^{(\lambda)} = v - v_\lambda$, defined on $\SL$, and $V^{(\lambda)}$ given by \eqref{definition V}. 
By Lemma \ref{lemma inequality for w}, we have $\int_{\{w^{(\lambda)} < 0\}} (V^{(\lambda)})^\frac{n}{4} < \epsilon_0$ for all $\lambda$ sufficiently negative. Therefore Lemma \ref{lemma 4.4} implies that $w^{(\lambda)} \geq 0$ on $\SL$ for all $\lambda$ sufficiently negative.  

Therefore
\[ \lambda_0 = \sup \{ \lambda < 0 \, : \, w^{(\mu)} \geq 0 \text{  for all  } \mu < \lambda \}, \]
is well-defined.

We claim that either $w_{\lambda_0}>0$ a.e. on $\Sigma_{\lambda_0}$ or $w_{\lambda_0}\equiv 0$. Indeed, by continuity, we still have $w^{(\lambda_0)} \geq 0$ a.e. in $\Sigma_{\lambda_0}$. Moreover, inequality \eqref{inequality Delta w leq w} implies that $-\Delta w^{(\lambda)}\geq 0$ in $\SL$ for $\lambda<\lambda_0$ and, therefore, by continuity $-\Delta w^{(\lambda_0)} \geq 0$ in $\Sigma_{\lambda_0}$. (For the continuity argument, we pass to the limit in the inequality $\int_{\SL} w^{(\lambda)}(-\Delta)\varphi\geq 0$ for $0\leq\varphi\in C_0^\infty(\SL)$.) The claim now follows by the strong maximum principle in $\Sigma_{\lambda_0}$.

After these preliminaries we now show that if $\lambda_0 < 0$, then $w^{(\lambda_0)}\equiv 0$. Later we will see that $w^{(\lambda_0)}\equiv 0$ is impossible and therefore we will conclude that $\lambda_0 = 0$. We argue by contradiction and assume that $\lambda_0 < 0$ and that $w^{(\lambda_0)}\nequiv 0$. Then by the above argument $w^{(\lambda_0)} > 0$ a.e. on $\Sigma_{\lambda_0}$. This strict inequality implies that the quantity
\[ I(\lambda) := \int_{\{ x \in \SL \; : \; w^{(\lambda)}(x) < 0 \}} \left( v_\lambda^\frac{2n}{n-4} + k_\lambda ^n \right) \]
tends to zero as $\lambda \searrow \lambda_0$. Indeed, any sequence of $\lambda$'s tending to $\lambda_0$ has a subsequence along which $w^{(\lambda)}(x) \to w^{(\lambda_0)}(x)$ pointwise for a.e. $x \in \Sigma_{\lambda_0}$. Since $w^{(\lambda_0)} > 0$, we have that $1_{\{w^{(\lambda)} < 0\}}(x) \to 0$ pointwise for a.e. $x \in \Sigma_{\lambda_0}$, which is the same as $1_{\{v(x) > v(x^\lambda)\}}\to 0$ pointwise almost everywhere in $\{x_1<\lambda_0\}$. Therefore, by dominated convergence, in view of the integrability assumptions on $v$ and $k$ and the assumption $\lambda_0<0$,
\begin{align*}
I(\lambda) = \int_{\{x_1 < \lambda\}} 1_{\{v(x) > v(x^\lambda)\}}(x) ( v^\frac{2n}{n-4}(x) + k^n(x) ) \to 0 \,.
\end{align*} 

On the other hand, as in the proof of Lemma \ref{lemma inequality for w}, we have $ \int_{\{w^{(\lambda)} < 0 \}} (V^{(\lambda)})^\frac{n}{4} \lesssim I(\lambda)$ for all $\lambda<0$. This, together with $I(\lambda)\to 0$ as $\lambda \searrow \lambda_0$, implies by Lemma \ref{lemma 4.4} that there is a $\delta > 0$ such that $w^{(\lambda)} \geq 0$ for all $\lambda \in (\lambda_0, \lambda_0 + \delta)$. This is a contradiction to the definition of $\lambda_0$, and therefore we conclude that $w^{(\lambda_0)} \equiv 0$ if $\lambda_0<0$.

We now show that $w^{(\lambda_0)}\equiv 0$ for $\lambda_0<0$ is impossible under the assumptions of the theorem. Indeed, if $v\equiv v_{\lambda_0}$ and $\lambda_0<0$, then, by the integrability assumption on $v$,
$$
\int_{\R^n} v^\frac{2n}{n-4} = \int_{x_1<\lambda_0} v^\frac{2n}{n-4} +
\int_{x_1>\lambda_0} v^\frac{2n}{n-4} = 2 \int_{x_1<\lambda_0} v^\frac{2n}{n-4} <\infty \,.
$$
This is impossible under the assumption $v\not\in L^\frac{2n}{n-4}(\R^n)$. Also, if $v\equiv v_{\lambda_0}$, then by \eqref{eq:symmeq}, $k^\frac{n+4}{2} g(k^{-\frac{n-4}{2}}v)\equiv k_\lambda^\frac{n+4}{2} g(k_\lambda^{-\frac{n-4}{2}}v)$. Thus,
$$
0 = k^\frac{n+4}{2} g(k^{-\frac{n-4}{2}}v)- k_\lambda^\frac{n+4}{2} g(k_\lambda^{-\frac{n-4}{2}}v) = \frac{n-4}{2} \int_{k_\lambda}^k \left( \frac{n+4}{n-4} \frac{g(s^{-\frac{n-4}{2}} v)}{s^{-\frac{n-4}{2}} v} - g'(s^{-\frac{n-4}{2}} v) \right)\! v s^3
$$
and when the integrand is positive for almost every $s$, then we deduce that $k\equiv k_\lambda$. To summarize, under the assumptions of the theorem it is impossible that $w^{(\lambda_0)}\equiv 0$ for $\lambda_0<0$. Thus, we conclude that $\lambda_0=0$.

The fact that $\lambda_0 = 0$ implies that 
\[ v(x_1,x_2,...,x_n) \geq v(-x_1, x_2,...,x_n) \text{  for all  } x_1 > 0, x_2,...,x_n \in \R \]
and that $v$ is strictly increasing in $x_1$ on $\{x_1 < 0 \}$. (The strictness follows from the fact that $w^{(\lambda)}>0$ for $\lambda<0$.)

Considering $\tilde{v}(x_1,x_2,...,x_n) := v(-x_1, x_2,...,x_n)$, which solves the same equation as $v$ due to the symmetry of $k$, and repeating all of the previous arguments, we derive the complementary inequality
\[ v(x_1,x_2,...,x_n) \leq v(-x_1, x_2,...,x_n) \text{  for all  } x_1 > 0, x_2,...,x_n \in \R \]
and the fact that $v$ is strictly decreasing in $x_1$ on $\{x_1 >0 \}$.

Hence $v$ is strictly symmetric-decreasing with respect to $\{x_1=0\}$, as claimed.
\end{proof}


\section{ODE analysis}\label{section ode}

In this section, we prove Theorem \ref{theorem classification ODE} as well as Remark \ref{remark strict}. 

Recall from the introduction that by the radial symmetry of any solution $u$ on $\R^n \setminus \{0\}$ of \eqref{differential eq} proved in Theorem \ref{theorem hardy solutions are radial}, we can define a function $v$ on $\R$ by setting $u(x) = |x|^{-\frac{n-4}{2}} v(\ln |x|)$. Via this Emden-Fowler change of variables, equation \eqref{differential eq} is equivalent to 
\begin{equation}
\label{ode with g}
v^{(4)} - A v'' + B v = g(v)  \qquad \text{in } \R
\end{equation}
for certain constants $A$ and $B$, depending on $n$. The only property of these constants that we will be using is that
\begin{equation}
\label{eq:structureass}
A^2>4B>0 
\qquad\text{and}\qquad
A>0 \,.
\end{equation}

In \cite{FrKo} we have classified all entire solutions (i.e., solutions defined on all of $\R$) of equation \eqref{ode with g} in the case $g(v) = v^\frac{n+4}{n-4}$. In the present section we extend this classification to all $g$ satisfying the assumptions \eqref{conditions on g}. In a complementary way to the proof of Theorem \ref{theorem moving planes}, in this section we actually make no use of the upper bound $g(t)/t \leq (n+4)/(n-4) g'(t)$ from \eqref{conditions on g}. 

Our exposition here will  focus on the main steps of the argument, providing details only where there is a significant difference to \cite{FrKo}.


\subsection{Proof of Theorem \ref{theorem classification ODE}}

We will assume throughout this section that $g$ satisfies the assumptions \eqref{conditions on g}. Let us introduce
\begin{equation}
\label{definition G}
 G(v) = \int_0^v g(t) \diff t  - B \frac{v^2}{2}. 
\end{equation}
The crucial observation is that for the proofs in \cite{FrKo} only some qualitative properties of $G$ are needed, as described in the next lemma. 

\begin{lemma}
\label{lemma properties of G}
One has $G(0) = G'(0) = 0$.  Moreover, there is an $a_0 >0$ such that $G$ is strictly decreasing on $(0, a_0)$ and strictly increasing towards $\infty$ on $(a_0, \infty)$.
\end{lemma}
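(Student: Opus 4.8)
The plan is to read off all the stated properties of $G$ directly from the structural assumptions \eqref{conditions on g} on $g$ together with \eqref{eq:structureass}. First, since $g \in C^1(\R_+)$ with $\lim_{t\to 0} g(t)=0$, the function $t\mapsto \int_0^t g$ is $C^2$ on $[0,\infty)$ (interpreting $g(0)=0$), so $G$ is $C^2$ with $G(0)=0$ and $G'(v) = g(v) - Bv$, hence $G'(0) = 0$. This disposes of the first sentence.

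For the monotonicity statement, the key is to analyze the sign of $G'(v) = g(v) - Bv$. I would first show $G'$ has exactly one positive zero. Consider $h(v) := g(v)/v$ on $(0,\infty)$. The lower bound $g(t)/t < g'(t)$ in \eqref{conditions on g} says exactly that $h'(v) = (g'(v)v - g(v))/v^2 > 0$, so $h$ is strictly increasing on $(0,\infty)$. Next I would pin down its limits: as $v\to 0$, since $g\in C^1$ and $g(0)=0$, $h(v) = g(v)/v \to g'(0) = \beta < B$ (here I use that $\lim_{t\to 0} g'(t)=\beta$ exists, so $g'(0)=\beta$); and as $v\to\infty$, the superlinearity bound $g(t)\geq ct^q$ with $q>1$ forces $h(v) \geq c v^{q-1}\to\infty$. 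By the intermediate value theorem and strict monotonicity of $h$, there is a unique $a_0 > 0$ with $h(a_0) = B$, i.e. $g(a_0) = Ba_0$, and moreover $h(v) < B$ for $v\in(0,a_0)$ while $h(v) > B$ for $v > a_0$. Translating back, $G'(v) = v(h(v) - B)$ is negative on $(0,a_0)$ and positive on $(a_0,\infty)$, which gives that $G$ is strictly decreasing on $(0,a_0)$ and strictly increasing on $(a_0,\infty)$.

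It remains to check that $G$ increases \emph{towards $\infty$} on $(a_0,\infty)$, i.e. $\lim_{v\to\infty} G(v) = +\infty$. This again follows from the superlinearity: $\int_0^v g(t)\,dt \geq \int_1^v c t^q\,dt = \frac{c}{q+1}(v^{q+1} - 1)$ for $v\geq 1$, which grows like $v^{q+1}$ with $q+1 > 2$, so it dominates the quadratic term $Bv^2/2$, and $G(v)\to+\infty$.

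I do not anticipate a serious obstacle here; the only point requiring a little care is the passage to the limit $h(v)\to\beta$ as $v\to 0$, which should be justified either by l'Hôpital (using continuity of $g'$ at $0$ in the sense that $\lim_{t\to 0}g'(t)=\beta$) or by writing $g(v) = \int_0^v g'(t)\,dt$ and dividing by $v$. One should also note in passing that $a_0$ is automatically a strict local (indeed global on $(0,\infty)$) minimum of $G$, and that $G < 0$ on $(0,a_0]$ since $G(0)=0$ and $G$ is strictly decreasing there — this is not asserted in the lemma but is the kind of consequence used later, so it may be worth recording.
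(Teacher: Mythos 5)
Your proposal is correct and takes essentially the same route as the paper: the key step in both is to factor $G'(v) = v\bigl(\tfrac{g(v)}{v} - B\bigr)$, observe that $v \mapsto g(v)/v$ is strictly increasing (from the lower bound $g/t < g'$) and lies below $B$ near $0$ (since it tends to $\beta < B$), and invoke the superlinearity $g(t) \geq ct^q$ both to produce the crossing point $a_0$ and to get $G(v) \to \infty$. Your write-up is merely a bit more explicit than the paper's about the limits of $g(v)/v$ at $0$ and $\infty$.
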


\begin{proof}
The first part follows directly from \eqref{definition G} and the fact that $\lim_{t \to 0} g(t) = 0$. 

For the second part, we write $G'(v) = g(v) - B v = v (\frac{g(v)}{v} - B)$. The assumptions \eqref{conditions on g} imply that $\frac{g(v)}{v}-B$ is negative for all sufficiently small $v$ and that $v \mapsto \frac{g(v)}{v}$ is strictly increasing. Thus, there is an $a_0 > 0$ such that $G'$ is strictly negative on $(0, a_0)$ and strictly positive on $(a_0, \infty)$. From the assumption $g(t) \geq c t^q$ in \eqref{conditions on g} we obtain $G(v) \geq \frac{c}{q+1} v^{q+1} - \frac{B}2 v^2 - C \to \infty$ as $v \to \infty$.
\end{proof}

The following lemma concerns the asymptotic behavior of entire solutions. Its most notable consequence is that any solution to \eqref{ode with g} which tends to $\infty$ must blow up in finite time. 

\begin{lemma}
\label{lemma wei blow up}
Let $v \in C^4((0, \infty))$ be a positive solution of \eqref{ode with g} and suppose that $a:= \lim_{t \to \infty} v(t) \in [0, \infty]$ exists. Then either $a = 0$ or $a = a_0$. 
\end{lemma}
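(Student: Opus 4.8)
The plan is to study the behavior of the ODE \eqref{ode with g} near a putative limit and to extract a contradiction unless that limit is a zero of $G'$, i.e.\ $0$ or $a_0$. First I would note that since $v$ is positive and $v(t)\to a$, the right-hand side $g(v(t))$ converges to $g(a)$ (with $g(0)=0$ if $a=0$, and interpreting $g(\infty)=\infty$ if $a=\infty$), so one expects $v^{(4)}-Av''$ to converge as well. Before anything else I would rule out $a=\infty$: from $g(t)\ge ct^q$ with $q>1$ and the convexity-type structure, a solution blowing up to $\infty$ cannot be global. Concretely, multiply \eqref{ode with g} by $v'$ and integrate to get the Hamiltonian-type identity
\begin{equation*}
v'''v' - \tfrac12 (v'')^2 - \tfrac{A}{2}(v')^2 + \tfrac{B}{2}v^2 - \int_0^v g = \text{const},
\end{equation*}
that is, $v'''v' - \tfrac12 (v'')^2 - \tfrac{A}{2}(v')^2 - G(v)$ is conserved. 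If $a=\infty$, then $G(v(t))\to+\infty$ by Lemma \ref{lemma properties of G}, forcing the remaining terms to blow up, and a standard argument (compare \cite{FrKo}) shows the solution must leave $C^4((0,\infty))$ in finite time, contradicting globality. So $a\in[0,\infty)$.

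Next, assume $a\in(0,\infty)$ and $a\ne a_0$; I want a contradiction. The key is the conserved energy $E := v'''v' - \tfrac12 (v'')^2 - \tfrac{A}{2}(v')^2 - G(v)$, which is constant along the solution. I would first argue that $v',v'',v''',v^{(4)}$ all tend to $0$ as $t\to\infty$. This is the technical heart: knowing only $v\to a$, one upgrades to decay of the derivatives. One way is to use interior estimates for the linear operator $\partial_t^4 - A\partial_t^2$ together with the fact that $v^{(4)}-Av'' = g(v)-Bv+0\cdot\ldots$ is bounded and converges; a cleaner route is to observe that $h:=v-a$ solves $h^{(4)}-Ah''+Bh = g(v)-g(a) - B(v-a) + (g(a)-Ba) = G'(a) + o(1)$ as $t\to\infty$, and since $h\to0$ is bounded, elliptic/ODE regularity bootstraps to bounded derivatives of all orders, hence (by Arzel\`a--Ascoli on translates and uniqueness of the limit) $h',h'',h''',h^{(4)}\to 0$. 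Passing to the limit in the equation then gives $0 = G'(a)$, i.e.\ $g(a)=Ba$. But Lemma \ref{lemma properties of G} says $G'$ vanishes only at $0$ and $a_0$, so $a=a_0$, contradicting $a\ne a_0$. Combining with the case $a=0$ already allowed, the only possibilities are $a=0$ or $a=a_0$.

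I expect the main obstacle to be the passage from "$v\to a$" to "all derivatives of $v$ tend to $0$", since a priori convergence of a function gives no control on its derivatives. The resolution is a compactness-and-uniqueness argument on the translated solutions $v_k(t):=v(t+k)$: they are bounded in $C^4$ on compact sets (because the equation bounds $v^{(4)}$ in terms of lower-order terms, all controlled once $v$ is bounded), so along a subsequence they converge in $C^3_{\rm loc}$ to a solution $v_\infty$ of \eqref{ode with g} with $v_\infty\equiv a$ constant; plugging a constant into the equation forces $Ba=g(a)$. Since every subsequential limit of the derivatives must then be $0$, and the limit is unique, $v'(t+k)\to0$ etc., which is what we need — and this also directly yields $G'(a)=0$ without even invoking the energy $E$. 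The energy identity is, however, useful as a double-check and will be reused in later parts of the classification, so I would record it here. The alternative, more hands-on approach of repeatedly applying mean value theorem / Landau–Kolmogorov interpolation inequalities to control $v'$ by $v$ and $v^{(4)}$ also works but is messier; I would present the compactness argument as the main line.
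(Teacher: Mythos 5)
Your proposal takes a genuinely different route from the paper. The paper gives no proof of this lemma at all; it simply refers to \cite[Proposition 5]{GazzolaGrunau} (proved there for $g(t)=t^p$, $p>\tfrac{n+4}{n-4}$) and asserts that the method adapts under the weaker hypotheses $g(t)\ge ct^q$ and Lemma \ref{lemma properties of G}. By contrast, you propose a self-contained argument. For the case $a\in(0,\infty)$ your compactness-and-uniqueness argument is correct and clean: boundedness of $v$ near $+\infty$ plus interior estimates for the constant-coefficient operator $\partial_t^4-A\partial_t^2$ give uniform $C^4$ bounds on the translates $v_k$, Arzel\`a--Ascoli gives a $C^4_{\rm loc}$ limit which must be the constant $a$, and substituting the constant into the equation forces $Ba=g(a)$, i.e.\ $G'(a)=0$; Lemma \ref{lemma properties of G} then gives $a=a_0$. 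This is more transparent than chasing the Gazzola--Grunau argument, and you correctly diagnose the only real obstacle (passing from $v\to a$ to decay of $v',\dots,v^{(4)}$) and resolve it. Note only that your throwaway remark ``the equation bounds $v^{(4)}$ in terms of lower-order terms, all controlled once $v$ is bounded'' is circular as stated — $v''$ appears on the right-hand side — and the actual content is the interior/interpolation estimate you invoke a sentence later; this should be stated as the primary argument.

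The case $a=\infty$, however, is under-argued. The conserved energy $v'''v'-\tfrac12(v'')^2-\tfrac{A}{2}(v')^2-G(v)=\mathrm{const}$ is correct and does show $v'''v'\to\infty$, but that alone does not produce finite-time blowup — a solution can be defined on all of $(0,\infty)$ and tend to $\infty$. What is actually needed (and is the part where the growth hypothesis $g(t)\ge ct^q$, $q>1$, is used) is first to deduce from $v'''v'\to\infty$ and $v\to\infty$ that eventually $v,v',v'',v'''$ are all positive, hence $v^{(4)}=Av''-Bv+g(v)\ge c'v^q$ for large $t$, and then to invoke (or prove) that a $C^4$ solution of $v^{(4)}\ge c'v^q$ with all four derivatives positive at some time blows up at a finite time. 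Only then does $a=\infty$ contradict $v\in C^4((0,\infty))$. Your ``a standard argument (compare \cite{FrKo}) shows the solution must leave $C^4((0,\infty))$ in finite time'' elides all of this; if you want to keep the proof self-contained rather than leaning on a reference the way the paper does, this intermediate chain must be supplied.
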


Lemma \ref{lemma wei blow up} is proved in \cite[Proposition 5]{GazzolaGrunau} for $v \in C^4(\R)$ and in the special case $g(t) = t^p$ with $p > \frac{n+4}{n-4}$. An inspection of the proof there shows that under the assumptions of Lemma \ref{lemma wei blow up} the same method of proof can be applied, using Lemma \ref{lemma properties of G} and the inequality $g(t) \geq c t^q$ from \eqref{conditions on g}. We omit the details. 

The following comparison lemma is a key technical ingredient in our argument. It is a variant of \cite[Lemma 1]{vdb}, see also \cite[Theorem 2.1]{BuChTo}. The novelty here is that it is stated and proved for nonnegative solutions, instead of bounded solutions. This difference allows to significantly shorten the proof in \cite{FrKo}, because boundedness of entire solutions, which was one of the main steps in \cite{FrKo}, need no longer be shown a priori.

For the statement of the lemma we recall the structural assumption \eqref{eq:structureass}, which implies that the polynomial $\xi^2 - A \xi + B$ has two distinct positive roots. We denote these by $\lambda > \mu>0$.

\begin{lemma}
\label{lemma comparison}
Let $v, w \in C^4(\R)$ be nonnegative solutions to the equation \eqref{ode with g} with 
\begin{align*}
v(0) & \geq w(0), \\
  v'(0) &\geq w'(0), \\
v''(0) - \mu v(0) &\geq w''(0) - \mu w(0), \\
 v'''(0) - \mu v'(0) &\geq w'''(0) - \mu w'(0).
\end{align*}
Then $v \equiv w$.
\end{lemma}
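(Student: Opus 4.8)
The plan is to reduce Lemma~\ref{lemma comparison} to a monotonicity/positivity statement for the linear fourth-order operator obtained by factoring $\partial_t^4 - A\partial_t^2 + B = (\partial_t^2 - \mu)(\partial_t^2 - \lambda)$. Set $d = v - w$. Then $d$ solves $(\partial_t^2 - \mu)(\partial_t^2 - \lambda) d = g(v) - g(w)$, and the four hypotheses say precisely that $d(0)\geq 0$, $d'(0)\geq 0$, $(\partial_t^2 - \mu)d|_{0}\geq 0$, and $(\partial_t^3 - \mu\partial_t)d|_0 = (\partial_t (\partial_t^2-\mu)d)|_0 \geq 0$. So if I write $h := (\partial_t^2 - \mu) d$, the hypotheses are $h(0)\geq 0$, $h'(0)\geq 0$, $d(0)\geq 0$, $d'(0)\geq 0$, and $h$ satisfies $(\partial_t^2 - \lambda) h = g(v) - g(w)$.

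First I would establish the statement on the forward half-line $[0,\infty)$ (the backward half-line is symmetric after $t \mapsto -t$, which preserves the equation and, crucially for the hypotheses, I should double-check: replacing $t$ by $-t$ flips the signs of $v'(0)$ and $v'''(0) - \mu v'(0)$, so the backward direction is genuinely a separate argument — one runs the same scheme on $(-\infty,0]$ using that the first-derivative hypotheses are not needed once one integrates from $0$ in the appropriate direction; I'd need to be a little careful here, and this bookkeeping is a candidate for the main nuisance). The core mechanism on $[0,\infty)$ is a bootstrap: from $(\partial_t^2-\lambda)h = g(v)-g(w)$ together with $h(0),h'(0)\geq 0$, the representation $h(t) = e^{\sqrt\lambda t}(\cdots) + e^{-\sqrt\lambda t}(\cdots) + \int_0^t \frac{\sinh(\sqrt\lambda(t-s))}{\sqrt\lambda}(g(v)-g(w))(s)\,ds$ shows that as long as $g(v)-g(w)\geq 0$ on $[0,t]$ we get $h\geq 0$ on $[0,t]$. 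Similarly, from $(\partial_t^2-\mu)d = h$ with $d(0),d'(0)\geq 0$ and $h\geq 0$ on $[0,t]$, we get $d\geq 0$ on $[0,t]$. Finally $d\geq 0$ on $[0,t]$ gives $v\geq w$ there, and since $g$ is increasing (which follows from $g(s)/s < g'(s)$, in particular $g'>0$, in \eqref{conditions on g}), $g(v)-g(w)\geq 0$ on $[0,t]$ — closing a continuity/connectedness argument that propagates $d\geq 0$ to all of $[0,\infty)$.

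Having shown $v\geq w$ on all of $\R$ (combining both half-lines), the remaining step is to upgrade $v\geq w$ to $v\equiv w$. Here I would use the integrated identities more quantitatively: since $g(v)-g(w)\geq 0$ everywhere, $h' = \sqrt\lambda(\text{growing part}) - \sqrt\lambda(\text{decaying part}) + \int_0^t \cosh(\cdots)(g(v)-g(w))$, and monotonicity forces the "growing" exponential coefficient to vanish (otherwise $h$, hence $d$, hence $v$, would blow up like $e^{\sqrt\lambda t}$, but then Lemma~\ref{lemma wei blow up} — or rather its finite-time blow-up consequence, since $v$ is required to be a $C^4(\R)$, i.e.\ global, solution — is contradicted, as an unbounded monotone-ish solution cannot be entire). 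A cleaner route: because $v$ and $w$ are \emph{entire} and the forward argument shows any nonnegativity of $d$ and its derivatives at a point propagates forward, while a symmetric argument propagates backward, one deduces that $d$ and the auxiliary combinations are nonnegative on all of $\R$; but the same reasoning applied to $-d$ would need reversed inequalities, so instead I argue that $h$ is a nonnegative solution of $(\partial_t^2-\lambda)h \geq 0$ on $\R$, which by the maximum principle for $\partial_t^2 - \lambda$ on $\R$ (any bounded-below global solution of $h'' \geq \lambda h$ with $h \geq 0$... ) forces structure; the truly clean finish is: $g(v) - g(w) = c(t)\, d$ with $c(t) = \int_0^1 g'(w + \tau d)\,d\tau \geq 0$ bounded on compacta, so $d$ solves a linear equation $(\partial_t^2-\mu)(\partial_t^2-\lambda)d = c(t) d$ with $d\geq 0$; then $d\geq 0$ together with $(\partial_t^2 - \lambda)h = c d \geq 0$ and $h = (\partial_t^2-\mu)d$ shows $h$ cannot have an interior positive maximum unless constant, and chasing this through (using that $v,w$, hence $d$, are globally defined and that $d$ cannot grow exponentially without contradicting entireness via Lemma~\ref{lemma wei blow up}-type control) pins down $d \equiv 0$. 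I expect the main obstacle to be exactly this last rigidity step — turning the one-sided inequality $v \geq w$ into equality — and getting the direction of the half-line arguments and the role of the first-derivative hypotheses bookkept correctly; the propagation-of-positivity steps themselves are routine Duhamel/ODE comparison estimates.
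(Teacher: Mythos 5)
Your forward half-line propagation is correct and is essentially the paper's Step 1: with $\phi := v''-\mu v$, $\psi := w''-\mu w$ and $h := \phi-\psi$, the hypotheses give $h(0), h'(0) \geq 0$ and $d(0), d'(0) \geq 0$ for $d := v-w$, and the factorization $(\partial_t^2-\lambda)h = g(v)-g(w)$ together with monotonicity of $g$ closes the bootstrap on $[0,\sigma]$ as long as $v\geq w$ there. Where your proposal goes off the rails is in the overall architecture: you try to first establish $v \geq w$ on all of $\R$ and then, separately, upgrade this to $v\equiv w$. Neither step works as you have it, and neither is needed.

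The backward half-line is a genuine gap that you correctly flagged but did not resolve: under $t \mapsto -t$ the hypotheses on $v'(0)$ and on $v'''(0)-\mu v'(0)$ flip sign, so the forward argument simply does not transfer, and the paper never attempts it. The ``upgrade'' step is also a gap: your maximum-principle/rigidity discussion is vague, and the remark that ``the growing exponential coefficient must vanish'' does not by itself pin down $d\equiv 0$. What you are missing is that the contradiction should come out of the forward direction alone, once you exploit the strictness that $v\nequiv w$ gives. Concretely: assume $v\nequiv w$; by uniqueness for the ODE \eqref{ode with g}, some lowest-order derivative at $0$ is strictly unequal while lower ones agree, so by Taylor $v>w$ on some $(0,\sigma)$. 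Then $h''-\lambda h = g(v)-g(w)>0$ on $(0,\sigma)$ with $h(0),h'(0)\geq 0$ forces $h\geq 0$, i.e.\ $d''\geq \mu d>0$ on $(0,\sigma)$; since $d'(0)\geq 0$ this gives $d'>0$ on $(0,\sigma)$, so $d$ is strictly increasing. Because $\sigma$ was arbitrary among intervals on which $v>w$, the set $\{v>w\}\cap(0,\infty)$ is all of $(0,\infty)$, and $d'$ is positive and increasing there, so $d(t)\to\infty$. Since $w\geq 0$, this forces $v(t)\to\infty$, which contradicts Lemma \ref{lemma wei blow up}. No backward half-line argument, no ``$v\geq w$ implies $v\equiv w$'' step. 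You already have all the pieces — the factorization, the Duhamel bootstrap, and Lemma \ref{lemma wei blow up} — but you are not assembling them into the contradiction argument, and the alternatives you sketch in their place do not close.
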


Because of its importance for us, we include a complete proof of this lemma. It follows closely that of \cite[Lemma 1]{vdb}, but uses in addition Lemma \ref{lemma wei blow up}.

\begin{proof}
Let $v$ and $w$ satisfy the assumptions of the lemma and suppose, by contradiction, that $v \nequiv w$. Then by uniqueness of ODE solutions and by our hypotheses on the initial conditions, there is $k \in \{0,1,2,3\}$ such that $v^{(k)}(0) > w^{(k)}(0)$ and $v^{(l)}(0) = w^{(l)}(0)$ for all $0 \leq l < k$. Therefore, in any case, 
\begin{equation}
\label{eq:comparison}
v > w \quad \text{on } (0, \sigma)
\end{equation}
for some sufficiently small $\sigma >0$. 

We define the auxiliary functions 
\[ \phi(t):= v''(t) - \mu v(t) \qquad \text{and}\qquad \psi(t):= w''(t) - \mu w(t). \]
Then by the hypotheses, we have
\begin{equation} \label{eq comp 1} (\phi-\psi)(0) \geq 0 \qquad \text{and}\qquad (\phi-\psi)'(0) \geq 0. \end{equation}
From equation \eqref{ode with g} and by the definition of $\lambda$ and $\mu$, we have
\[ (\phi-\psi)''(t) - \lambda (\phi-\psi)(t) = g(v(t)) - g(w(t)) 
\quad \text{ for all   } t \in \R.
\]
Because of \eqref{eq:comparison} and the fact that $g$ is strictly increasing on $(0, \infty)$, this implies that
\begin{equation} \label{eq comp 2} (\phi-\psi)''(t) - \lambda (\phi-\psi)(t) > 0 \quad \text{ for all   } t \in (0, \sigma). \end{equation}
The inequalities \eqref{eq comp 1} and \eqref{eq comp 2} easily imply that $(\phi - \psi)(t) \geq 0$ for $t \in (0, \sigma)$, or equivalently, that
\begin{equation} \label{eq comp 3} (v-w)''(t) \geq \mu (v-w) (t)  \quad \text{ for all   } t \in (0, \sigma). \end{equation}
Since $(v-w)'(0) \geq 0$ by the hypotheses of the lemma, we see from \eqref{eq comp 3} and \eqref{eq:comparison} that $(v-w)'(t) >0$ for all $t \in (0, \sigma)$. Hence $v-w$ is strictly increasing on $(0, \sigma)$ and, since $\sigma >0$ was arbitrary with the property \eqref{eq:comparison}, we infer that $v-w$ remains strictly positive for all times. 

Repeating the above arguments for the interval $(0, \infty)$ instead of $(0, \sigma)$, we see from \eqref{eq comp 3} and \eqref{eq:comparison} that $(v-w)'$ is positive and strictly increasing on $(0, \infty)$. Thus $\lim_{ t \to \infty} (v-w)(t) = \infty$. Since $w$ is nonnegative, this implies, in particular, that $\lim_{t \to \infty} v(t) = \infty$. This contradicts Lemma \ref{lemma wei blow up}, and we have therefore proved that $v \equiv w$. 
\end{proof}

From Lemma \ref{lemma comparison} we can deduce a remarkable rigidity property, namely that positive entire solutions to \eqref{ode with g} are determined by only two (instead of four!) initial values. A simple consequence of this is that positive solutions of \eqref{ode with g} are symmetric with respect to local extrema.   

\begin{corollary}
\label{corollary symmetry}
\begin{enumerate}
\item[(i)] Let $v, w \in C^4(\R)$ be nonnegative solutions of \eqref{ode with g} with $v(0) = w(0)$ and $v'(0) = w'(0)$. Then $v \equiv w$.
\item[(ii)]
Suppose that $v \in C^4(\R)$ is a nonnegative solution of \eqref{ode with g} with $v'(t_0) = 0$ for some $t_0 \in \R$. Then $v$ is symmetric with respect to $t_0$, i.e. $v(t_0 + t) = v(t_0 - t)$ for all $t \in \R$. 
\end{enumerate}
\end{corollary}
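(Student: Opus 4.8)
The key realization is that Corollary~\ref{corollary symmetry} is an almost immediate consequence of Lemma~\ref{lemma comparison}, so the proof should be short: it is just a matter of setting up the initial conditions so that the hypotheses of the comparison lemma apply symmetrically. For part (i), the plan is to apply Lemma~\ref{lemma comparison} twice, once with the roles of $v$ and $w$ as given and once with them reversed. The point is that the four inequalities in Lemma~\ref{lemma comparison} involve, besides $v(0)\geq w(0)$ and $v'(0)\geq w'(0)$, only the combinations $v''(0)-\mu v(0)$ and $v'''(0)-\mu v'(0)$ — but under the hypothesis $v(0)=w(0)$, $v'(0)=w'(0)$ we have no control on $v''(0),v'''(0)$ versus $w''(0),w'''(0)$ individually.

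The way around this is the observation that $v$ solves \eqref{ode with g}, so $v$ is determined by the four values $(v(0),v'(0),v''(0),v'''(0))$, but we may \emph{choose} which solution to compare against. Concretely, given $v$ with $v'(t_0)=0$ as in part (ii), define $\tilde v(t):=v(2t_0-t)$, the reflection of $v$ about $t_0$; since \eqref{ode with g} is autonomous and invariant under $t\mapsto -t$ (only even-order derivatives appear), $\tilde v$ is again a nonnegative $C^4(\R)$ solution of \eqref{ode with g}. At the point $t_0$ we have $\tilde v(t_0)=v(t_0)$, $\tilde v'(t_0)=-v'(t_0)=0=v'(t_0)$, $\tilde v''(t_0)=v''(t_0)$ and $\tilde v'''(t_0)=-v'''(t_0)$. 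Thus $v$ and $\tilde v$ agree in their first two derivatives at $t_0$; hence part (ii) follows from part (i) (applied with base point $t_0$ in place of $0$, which is legitimate by translation invariance of \eqref{ode with g}), giving $v\equiv\tilde v$, i.e. $v(t_0+t)=v(t_0-t)$.

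So the real content is part (i). Here is the strategy: set $\phi(t):=v''(t)-\mu v(t)$ and $\psi(t):=w''(t)-\mu w(t)$ as in the proof of Lemma~\ref{lemma comparison}, and recall $(\phi-\psi)''-\lambda(\phi-\psi)=g(v)-g(w)$. Consider the function $h:=v-w$, which satisfies $h(0)=h'(0)=0$. If $h\not\equiv 0$, then by uniqueness for the fourth-order ODE there is a smallest $k\in\{2,3\}$ with $h^{(k)}(0)\neq 0$; say $h^{(k)}(0)>0$ (otherwise swap $v$ and $w$), so that $v>w$ on some interval $(0,\sigma)$. Now I want to invoke Lemma~\ref{lemma comparison}, but to do so I need $(\phi-\psi)(0)\geq 0$ and $(\phi-\psi)'(0)\geq 0$, i.e. $h''(0)-\mu h(0)=h''(0)\geq 0$ and $h'''(0)-\mu h'(0)=h'''(0)\geq 0$. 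With the choice above, if $k=2$ then $h''(0)>0$ and $h'''(0)=h'''(0)$ is whatever it is; if $k=3$ then $h''(0)=0$ and $h'''(0)>0$. In the case $k=2$ it may happen that $h'''(0)<0$, so Lemma~\ref{lemma comparison} does not directly apply — and this is the one delicate point.

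\textbf{Main obstacle.} The gap just identified is the crux. The cleanest fix is to avoid Lemma~\ref{lemma comparison} as a black box and instead re-run its internal argument directly from $h(0)=h'(0)=0$ and $v>w$ on $(0,\sigma)$, exploiting that the maximum principle / ODE comparison needs only \emph{two} sign conditions on $\phi-\psi$ and its derivative \emph{at the first point where $h$ becomes positive}, not at $0$. Precisely: since $h(0)=h'(0)=0$ and $h>0$ just to the right of $0$, elementary calculus gives that for some $t_*\in[0,\sigma)$ arbitrarily close to $0$ we have $h(t_*)\geq 0$, $h'(t_*)\geq 0$ and $h''(t_*)\geq 0$ (e.g. $h$ being eventually convex-increasing near $0^+$ forces $h''>0$ somewhere, and one picks $t_*$ where $h,h',h''$ are all $\geq 0$); one also checks $(\phi-\psi)'(t_*)=h'''(t_*)-\mu h'(t_*)$ can be arranged $\geq 0$ by a further such point-selection, using that $h^{(k)}(0)>0$ for the relevant $k$. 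Then apply Lemma~\ref{lemma comparison} with base point $t_*$ instead of $0$ (valid by autonomy of \eqref{ode with g}) to conclude $v\equiv w$, contradicting $v>w$ on $(0,\sigma)$. Thus $h\equiv 0$. An alternative, and perhaps more honest, route is simply to cite that the proof of Lemma~\ref{lemma comparison} establishes the stronger statement that its conclusion holds whenever the four inequalities hold \emph{at some point}, and that by the smoothness of $h$ near $0$ with $h(0)=h'(0)=0$ one can always find such a point arbitrarily close to $0$ on the side where $h>0$; I would phrase the final write-up this way to keep it brief.
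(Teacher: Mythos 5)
Your reduction of (ii) to (i) via $\tilde v(t)=v(2t_0-t)$ is exactly the paper's argument, and your identification of the delicate point in (i) is also correct: if $h=v-w$ has $h(0)=h'(0)=0$ and $h''(0)>0$, nothing controls the sign of $h'''(0)$, so Lemma~\ref{lemma comparison} does not apply directly at $0$.

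However, the fix you propose has a genuine gap. You claim that because $h>0$ on $(0,\sigma)$ one can find $t_*$ arbitrarily close to $0^+$ with all four conditions of Lemma~\ref{lemma comparison} holding. This fails precisely in the problematic case $k=2$ with $h'''(0)<0$: then near $0^+$ one has $h(t)\approx \tfrac12 h''(0)t^2>0$, $h'(t)\approx h''(0)t>0$, $h''(t)\approx h''(0)>0$, but
\[
h'''(t_*)-\mu h'(t_*)\ \longrightarrow\ h'''(0)<0\qquad\text{as }t_*\to 0^+,
\]
so the fourth inequality is \emph{strictly violated} for all $t_*$ sufficiently close to $0^+$, and "further point-selection" cannot rescue it. Looking to the left of $0$ does not help either, since there $h'<0$ and the second inequality fails.

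The correct repair — and the one the paper uses — is the same reflection trick you already deploy for part (ii), applied \emph{before} invoking Lemma~\ref{lemma comparison}: since \eqref{ode with g} contains only even-order derivatives, replacing $v(t),w(t)$ by $v(-t),w(-t)$ gives new nonnegative solutions, flips the sign of the third derivative at $0$, and leaves $v(0)-w(0)$, $v'(0)-w'(0)$, $v''(0)-w''(0)$ unchanged. So one first swaps $v\leftrightarrow w$ if necessary to get $v''(0)\geq w''(0)$, then reflects if necessary to get $v'''(0)\geq w'''(0)$; now all four hypotheses of Lemma~\ref{lemma comparison} hold at $0$ and one concludes $v\equiv w$. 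This avoids the case analysis on $k$ entirely and closes the gap.
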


We point out once more that we only assume nonnegativity of $v$ in Corollary \ref{corollary symmetry}, whereas in \cite[Corollary 5]{FrKo} we assumed boundedness of $v$.  

\begin{proof}
[Proof of Corollary \ref{corollary symmetry}]
To prove $(i)$, we observe that up to exchanging $v$ and $w$, we may assume $v''(0) \geq w''(0)$. Furthermore, up to replacing $v(t)$ and $w(t)$ by $v(-t)$ and $w(-t)$ (which still solve \eqref{ode with g}), we may assume $v'''(0) \geq w'''(0)$. Then all assumptions of Lemma \ref{lemma comparison} are satisfied and we conclude $v \equiv w$. 

To prove $(ii)$, we simply apply $(i)$ to $v$ and $w(t) = v(t_0 - t)$, which also solves \eqref{ode with g}.
\end{proof}

We now prove a variant of Lemma \ref{lemma comparison} where one of the functions is constant.

\begin{lemma}\label{compconst}
Let $v \in C^4(\R)$ be a positive solution of \eqref{ode with g} and assume that either
\begin{align*}
v(0) \geq a_0, \qquad
  v'(0)= 0, \qquad
v''(0) \geq 0, \qquad
 v'''(0)= 0
\end{align*}
or
\begin{align*}
v(0) \leq a_0, \qquad
  v'(0)= 0, \qquad
v''(0) \leq 0, \qquad
 v'''(0)= 0 \,.
\end{align*}
Then $v \equiv a_0$. 
\end{lemma}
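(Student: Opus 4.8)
The plan is to argue by contradiction: assuming $v\not\equiv a_0$, I will show that neither set of conditions can hold. Two facts are used throughout. First, the constant $a_0$ solves \eqref{ode with g}, since $g(a_0)=Ba_0$ by Lemma \ref{lemma properties of G}; hence, by uniqueness for the initial value problem, if $v(t_0)=a_0$ and $v'(t_0)=v''(t_0)=v'''(t_0)=0$ for some $t_0$, then $v\equiv a_0$. Second, a positive solution of \eqref{ode with g} is smooth (bootstrap in the equation). Recall also from Lemma \ref{lemma properties of G} that $G'>0$ on $(a_0,\infty)$, $G'<0$ on $(0,a_0)$, and $G'(a_0)=0$, where $G'(s)=g(s)-Bs$.

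\textbf{Step 1: reduction to a strict extremum.} Assume the first set of conditions and $v\not\equiv a_0$. If $v''(0)>0$, then $t=0$ is a strict local minimum of $v$. If $v''(0)=0$, then $v'(0)=v''(0)=v'''(0)=0$, and the equation gives $v^{(4)}(0)=g(v(0))-Bv(0)=G'(v(0))$; since $v(0)\geq a_0$, either $v(0)>a_0$, so $v^{(4)}(0)>0$ and $t=0$ is again a strict local minimum, or $v(0)=a_0$, so all four initial values of $v$ agree with those of the constant $a_0$, forcing $v\equiv a_0$, a contradiction. Thus $t=0$ is a strict local minimum with $v(0)\geq a_0$, and $v'>0$ on some interval $(0,\delta)$. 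The second set of conditions is handled symmetrically, yielding that $t=0$ is a strict local maximum, $v(0)\leq a_0$, and $v'<0$ on some $(0,\delta)$.

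\textbf{Step 2: the first case.} I claim $v$ is periodic. If $v$ had no critical point on $(0,\infty)$ it would be strictly increasing there, so $\lim_{t\to\infty}v(t)$ would exist in $(v(0),\infty]$, contradicting Lemma \ref{lemma wei blow up} since $v(0)\geq a_0>0$. Let $t_1>0$ be the first critical point; then $v$ is strictly increasing on $(0,t_1)$. By Corollary \ref{corollary symmetry}(ii), $v$ is symmetric about $0$ and about $t_1$, and composing these two reflections gives $v(t+2t_1)=v(t)$ for all $t$, so $v$ is periodic with period $T:=2t_1$ and $\min_{\R}v=v(0)\geq a_0$. Hence $G'(v(t))\geq0$ for all $t$, with $G'(v)\not\equiv0$ because $v$ is non-constant and $G'(s)=0$ only at $s=a_0$. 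On the other hand, integrating \eqref{ode with g} over $[0,T]$ and using $\int_0^T v^{(4)}=\int_0^T v''=0$ by periodicity gives $\int_0^T\big(g(v)-Bv\big)\,dt=0$, i.e. $\int_0^T G'(v)\,dt=0$; this contradicts $G'(v)\geq0$, $G'(v)\not\equiv0$. Hence $v\equiv a_0$.

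\textbf{Step 3: the second case, and the main difficulty.} If $v$ has a critical point on $(0,\infty)$, the reflection argument of Step 2 shows $v$ is periodic with $\max_{\R}v=v(0)\leq a_0$, so $G'(v)\leq0$ and $\not\equiv0$, again contradicting $\int_0^T G'(v)\,dt=0$. Otherwise $v$ is strictly decreasing on $(0,\infty)$ to a limit which, by Lemma \ref{lemma wei blow up}, must be $0$ (it cannot be $a_0$ since $v(0)\leq a_0$). The equilibrium $0$ is hyperbolic — since $\beta<B$, the polynomial $\xi^4-A\xi^2+(B-\beta)$ has no purely imaginary root — so $v$ lies on its stable manifold and $v,v',v'',v'''\to0$ exponentially as $t\to\infty$; establishing this exponential decay is the one analytic input going beyond the earlier lemmas. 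Granting it, integration of \eqref{ode with g} over $(0,\infty)$, using $v'''(0)=v'(0)=0$ and the decay, yields $\int_0^\infty G'(v)\,dt=0$; but $v\leq a_0$ on $(0,\infty)$ with $v<a_0$ near $+\infty$ forces $\int_0^\infty G'(v)\,dt<0$. This contradiction completes the proof. I expect the main obstacle to be organizational — the degenerate case $v''(0)=0$ in Step 1, and the recognition that the "bad" solutions are forced to be periodic or homoclinic — together with the exponential decay needed to justify the integration by parts on $(0,\infty)$ in Step 3.
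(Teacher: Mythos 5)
Your proof is correct, but it takes a genuinely different and considerably longer road than the paper. The paper's argument is a direct monotonicity/differential-inequality argument. Under the first set of conditions, if $v\not\equiv a_0$, then by uniqueness either $v(0)>a_0$ or $v''(0)>0$; from $v^{(4)}=Av''+(g(v)-Bv)$ and $g(v)-Bv>0$ on $(a_0,\infty)$ one sees $v^{(4)}(0)>0$, and this sign propagates: starting from $v'(0)=v'''(0)=0$, $v''(0)\geq0$, $v(0)\geq a_0$, every $v^{(k)}$ with $k\leq3$ is strictly increasing as long as $v^{(4)}>0$, and $v^{(4)}>0$ persists because $v>a_0$ and $v''>0$ keep growing. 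So $v'\to\infty$, hence $v\to\infty$, contradicting Lemma~\ref{lemma wei blow up}. The second set of conditions is symmetric: $v^{(4)}<0$ propagates, $v'$ is negative and strictly decreasing while $v>0$, so $v$ hits zero in finite time, contradicting positivity. That is the entire proof — no periodicity, no integral identity, no decay estimate.

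Your route instead classifies $v$ as periodic or monotone via Corollary~\ref{corollary symmetry} and Lemma~\ref{lemma wei blow up}, and then derives a contradiction from the integral identity $\int G'(v)=0$ over a period (or over $(0,\infty)$). This is sound, and Steps 1 and 2 are fine, but Step 3 has a real forward dependency that the paper's proof does not have: to integrate the equation over $(0,\infty)$ you need $v'(t)\to0$ and $v'''(t)\to0$ as $t\to\infty$, which you supply via hyperbolicity/stable-manifold reasoning (or one could cite \cite[Lemma 4]{vdb}, which the paper only invokes later, in the proof of Lemma~\ref{lemma decay of homoclinic solution}). At the place where Lemma~\ref{compconst} appears, none of that is yet on the table, so your argument would force a reorganization or an appeal to an external dynamical-systems fact. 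Also note you don't actually need \emph{exponential} decay — $v',v'''\to0$ suffices to kill the boundary terms — so invoking the stable manifold theorem is overkill; but some decay-of-derivatives input is genuinely needed, and that is precisely what the paper's direct argument circumvents. In short: correct, conceptually appealing (it exposes the periodic/homoclinic dichotomy), but heavier machinery than necessary and not self-contained at this point in the paper.
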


\begin{proof} \textit{Proof under the first set of assumptions.}
Suppose, by contradiction, that $v \nequiv a_0$. Then by uniqueness of ODE solutions, either $v(0) > a_0$ or $v''(0) > 0$. 
Moreover, from the equation, we have
\begin{equation}
\label{eq for fourth derivative}
v^{(4)}(t) = A v''(t) + \big( g(v(t)) - Bv(t) \big).  
\end{equation}
Observing that $g(v) - Bv > 0$ for $v \in (a_0, \infty)$, we deduce that in both cases ($v(0) > a_0$ or $v''(0) > 0$) we have $v^{(4)} > 0$ on $(0, \sigma)$ for some sufficiently small $\sigma >0$. 

Together with the initial conditions, this implies that $v^{(k)}$ is strictly increasing on $(0, \sigma)$ for $k = 0,1,2,3$. Since $\sigma >0$ was arbitrary with the property that the right side of \eqref{eq for fourth derivative} is positive, we infer, in particular, that $v'$ is positive and strictly increasing on $(0, \infty)$. Thus $\lim_{ t \to \infty} v(t) = \infty$. This contradicts Lemma \ref{lemma wei blow up}, and we have therefore proved that $v \equiv a_0$. 

\textit{Proof under the second set of assumptions.}
Suppose, by contradiction, that $v \nequiv a_0$. Then by uniqueness of ODE solutions, either $v(0) < a_0$ or $v''(0) < 0$. Observing that $g(v) - Bv < 0$ for $v \in (0, a_0)$, we deduce from \eqref{eq for fourth derivative} that in both cases ($v(0) < a_0$ or $v''(0) < 0$) we have $v^{(4)} < 0$ on $(0, \sigma)$ for some sufficiently small $\sigma >0$. 

Together with the initial conditions, this implies that $v^{(k)}$ is strictly decreasing on $(0, \sigma)$ for $k = 0,1,2,3$. Since $\sigma >0$ was arbitrary with the property that the right side of \eqref{eq for fourth derivative} is negative, we infer, in particular, that $v'$ is negative and strictly decreasing as long as $v > 0$. Therefore, we must have $v(t_0) = 0$ with $v'(t_0) < 0$ for some finite $t_0 < \infty$.  This contradicts the positivity of $v$ and we have therefore proved that $v \equiv a_0$. 
\end{proof}

%

At last, we give the main ideas for the proofs of Theorems \ref{theorem classification ODE} and \ref{theorem classification} using the ingredients introduced so far.

\begin{proof}
[Proof sketch of Theorem \ref{theorem classification ODE}]
By arguments detailed in \cite[Proof of Proposition 3]{FrKo}, we can deduce from Lemmas \ref{lemma wei blow up} and \ref{lemma comparison} that every positive solution $v \in C^4(\R)$ is either constant equal to $a_0$, homoclinic to zero or periodic with unique local maximum and minimum per period.

We can now prove $(i)$. The existence and uniqueness of $a_0$ with the claimed properties is contained in Lemma \ref{lemma properties of G}. Next, let $v\in C^4(\R)$ be a positive solution with $\min_\R v \geq a_0$. Suppose without loss that $v(0) = \min_\R v$. Then $v'(0)=0$, $v''(0) \geq 0$ and, by Corollary~\ref{corollary symmetry}, $v$ is symmetric, so $v'''(0)=0$. Thus, Lemma \ref{compconst} implies $v \equiv a_0$. This completes the proof of $(i)$. 

The existence part of assertion $(ii)$ can be proved via the shooting method as in \cite{FrKo}. We invite the reader to check that using the properties of $G$ stated in Lemma~\ref{lemma comparison}, the argument carries over to the more general case considered here. For the uniqueness part of assertion $(ii)$ we use the fact mentioned above that any solution $v$ with $\inf_\R v=a>0$ is either constant or periodic (since it cannot be homoclinic to zero). In particular, $\inf_\R v$ is attained. Therefore the uniqueness follows from the first part of Corollary \ref{corollary symmetry}. The stated periodicity and monotonicity properties follow from the second part of Corollary \ref{corollary symmetry}.

Finally, we prove $(iii)$. We obtain the existence of a homoclinic solution simply as a limit of periodic solutions. Indeed, if we denote by $v_a$ the periodic solutions obtained in $(ii)$ with the normalization $v_a(0)=\max v_a$ and by $L_a$ their period length, then $v_a$ is symmetric-decreasing on $(-\frac{L_a}{2}, \frac{L_a}{2})$. Using $L_a \to \infty$ as $a \to 0$, it is not difficult to prove that $v_a$ converges to a non-trivial limit function which must be the homoclinic solution. See Subsection \ref{subsection existence of homoclinic solution} for details.

Next, we prove the uniqueness claim in $(iii)$. We first note that if $v$ is a positive solution with $\inf_\R v=0$, then $v$ is homoclinic to zero. This follows from the fact mentioned above, since if $v$ periodic or constant, it cannot be positive and have $\inf_\R v=0$. Now let $v$ and $w$ be two positive solutions in $C^4(\R)$ with the property that $v'(0)=w'(0)=0$ and $\lim_{|t|\to\infty} v(t)=\lim_{|t|\to\infty} w(t)=0$. We argue by contradiction and assume $v \nequiv w$. Then, by the first part of Corollary \ref{corollary symmetry}, we may assume that $v(0) > w(0)$. By comparison arguments detailed in \cite[Lemma~9]{FrKo}, this enforces that $v(t) > w(t)$ for all $t \in \R$. 

We can now derive the desired contradiction. For every $R >0$, we have, using integration by parts and the fact that $v$ and $w$ satisfy \eqref{ode with g},
\begin{align*}
0 & = \int_{-R}^R w(v^{(4)} - A v'' + Bv - g(v)) \\
&= b(R) + \int_{-R}^R v(w^{(4)} - A w'' +Bw -g(w)) + \int_{-R}^R wv(\frac{g(w)}{w} - \frac{g(v)}{v}) \\
&= b(R) + \int_{-R}^R wv(\frac{g(w)}{w} - \frac{g(v)}{v}) \,.
\end{align*}
Here, $b(R)$ contains all the boundary terms coming from the integrations by part. As in \cite[Lemma 4]{vdb} one shows that $b(R) \to 0$ as $R \to \infty$. But since the function $t \mapsto \frac{g(t)}{t}$ is strictly increasing on $(0, \infty)$ and since $v > w$, we find that $\int_{-R}^R wv(\frac{g(w)}{w} - \frac{g(v)}{v})$ is a negative and strictly decreasing function of $R$. Thus we obtain a contradiction by choosing $R$ large enough.

Finally, the claimed decay behavior can be proved by relatively standard comparison arguments, again relying on the factorization structure of equation \eqref{ode with g}; see Subsection~\ref{subsection decay homoclinic} for details. This concludes the proof of Theorem \ref{theorem classification ODE}.
\end{proof}

\begin{proof}[Proof of Remark \ref{remark strict}]
The inequality $\frac{\partial u}{\partial |x|}<0$ is equivalent to the bound $v'<\frac{n-4}{2}v$. Similarly as in the proof of Lemma \ref{lemma comparison} we introduce $\mu$ and $\lambda$ and
\[ \phi(t):= v''(t) - \mu v(t) \,,\]
which satisfies
\begin{equation}
\label{eq:factorization}
\phi'' -\lambda \phi = g(v) \,.
\end{equation}
(We emphasize that here $\mu=(n-4)^2/4$, which is potentially different from the use of $\mu$ in Theorem \ref{theorem classification ODE}.) Since $g(v)>0$, it follows from the maximum principle that $\phi<0$ on $\R$. Indeed, by Theorem \ref{theorem classification ODE} we know that $v$ is either constant, periodic or homoclinic to zero. In the first two cases the maximum principle can be clearly applied. In the third case it can be applied since $\lim_{|t|\to\infty} \phi(t)=0$, as we already argued in the proof of Theorem \ref{theorem classification ODE}.

The function $w:= v'/v$ satisfies
\begin{equation}
\label{eq:radmono}
w' = -w^2 + \mu + \frac{\phi}{v} \,.
\end{equation}
Since $\phi < 0$ and $v>0$, we have 
\begin{equation}
\label{ineq for w}
w' < - w^2 + \mu
\end{equation}
In particular, $w'(t) < 0$ whenever $|w(t)| \geq \sqrt \mu$. This implies that the set $\{ w \geq \sqrt \mu \}$ is either empty, or equal to $\R$ or of the form $(- \infty, T]$ for some $T \in\R$. We will rule out the last two possibilities and conclude that $w<\sqrt\mu$, as claimed.

We make again use of the classification result from Theorem \ref{theorem classification ODE}. When $v$ is constant or periodic, there is a two-sided sequence $(t_n)_{n\in\Z}$ with $t_n\to\pm\infty$ as $n\to\pm\infty$ such that $v'(t_n)=0$ for all $n$. This implies $w(t_n)=0$ and therefore the set in question can neither be of the form $\R$ nor of the form $(-\infty,T]$ for $T\in\R$.

Now assume that $v$ is homoclinic to zero. Then there is a $t_0\in\R$ such that $v'(t_0)=0$ and therefore the set cannot be of the form $\R$. Now suppose that there is a $T\in\R$ such that $w(t) \geq \sqrt \mu$ for all $t \leq T$. By \eqref{ineq for w}, $w' < 0$ on $(-\infty, T]$. Therefore there is an $\epsilon > 0$ such that $w(t) \geq \sqrt{\mu + \epsilon}$ for all $t \leq T-1 =: T_1$. Thus, $\mu\leq (\mu/(\mu+\epsilon))w(t)^2$ and so \eqref{ineq for w} implies $w'< - (\epsilon/(\mu+\epsilon)) w^2$ on $(-\infty,T_1)$ and therefore by integration,
$$
\frac{1}{w(T_1)} - \frac{1}{w(t)} > \frac\epsilon{\mu+\epsilon} (T_1-t)
\qquad\text{for all}\ t<T_1 \,.
$$
Since $w(t)>0$, this is a contradiction for $t$ sufficiently negative. This completes the proof.
\end{proof}


\subsection{Existence of a homoclinic solution}
\label{subsection existence of homoclinic solution}

In this subsection we provide the details in the existence part of $(iii)$ in Theorem \ref{theorem classification ODE}. As already explained, we shall construct a homoclinic solution to \eqref{ode with g} as a limit of periodic solutions $v_a$ with $a \searrow 0$.

The family $(v_a)_{a \in (0, a_0)}$ of periodic solutions and their associated minimal period lengths $(L_a)_{a \in (0, a_0)}$ were introduced before the statement of Theorem \ref{theorem classification}. We recall that $v_a(0) = \max_\R v_a$, that $v_a( \pm \frac{L_a}{2}) = \min_\R v_a = a$ and that $v_a$ is strictly symmetric-decreasing on $(- \frac{L_a}{2}, \frac{L_a}{2})$. 

The following lemma is fundamental for our construction. 

\begin{lemma}
\label{lemma uniform convergence}
Let $a_n \searrow 0$ and consider $v_n = v_{a_n}$ with associated minimal period length $L_n = L_{a_n}$. Then there is $v_\infty \in C^4(\R)$ such that, up to extracting a subsequence, we have $v_0 \to v_\infty$ in $C^4(K)$ for every compact $K \subset \R$. Moreover, $v_\infty(0) \geq a_0$ and, in particular, $v_\infty \nequiv 0$. 
\end{lemma}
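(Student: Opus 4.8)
\emph{Proof plan.} The plan is to produce, for every $R>0$, a bound on $\|v_n\|_{C^4([-R,R])}$ that is uniform in $n$, and then to extract a convergent subsequence via the Arzel\`a--Ascoli theorem together with the equation itself. Since $v_n(0)=\max_\R v_n$, Corollary \ref{corollary symmetry}(ii) shows that $v_n$ is symmetric about $0$, so $v_n'(0)=v_n'''(0)=0$; hence the uniform $C^4$ bound reduces to uniform bounds on the two remaining initial data $v_n(0)$ and $v_n''(0)$. Obtaining these — in particular an a priori bound on the oscillation amplitude $\max_\R v_n$ — is the crux of the matter; the rest is routine ODE theory.

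First I would use the conserved energy of \eqref{ode with g}: multiplying the equation by $v'$ one checks that
\[
E_n := v_n''' v_n' - \tfrac12 (v_n'')^2 - \tfrac{A}{2}(v_n')^2 - G(v_n)
\]
is constant in $t$, with $G$ as in \eqref{definition G}. Evaluating at the maximum $t=0$ gives $E_n=-\tfrac12 v_n''(0)^2-G(v_n(0))$, and evaluating at the minimum $t=L_n/2$ (where again $v_n'=v_n'''=0$ and $v_n=a_n$) gives $E_n=-\tfrac12 v_n''(L_n/2)^2-G(a_n)$. To control the last expression I would borrow the argument from the proof of Remark \ref{remark strict}: the function $\phi_n:=v_n''-\mu v_n$ is periodic and satisfies $\phi_n''-\lambda\phi_n=g(v_n)>0$, so the maximum principle for periodic functions forces $\phi_n<0$, i.e. $v_n''<\mu v_n$ everywhere. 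At the minimum this yields $0\le v_n''(L_n/2)<\mu a_n$, which together with $G(a_n)\to G(0)=0$ (Lemma \ref{lemma properties of G}) shows that $E_n\to 0$; in particular $\sup_n|E_n|<\infty$.

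Given $\sup_n|E_n|<\infty$, the identity $G(v_n(0))=-E_n-\tfrac12 v_n''(0)^2\le -E_n$, combined with $G(s)\to\infty$ as $s\to\infty$ (Lemma \ref{lemma properties of G}), yields $\sup_n v_n(0)=:M_0<\infty$. Moreover $v_n(0)>a_0$: otherwise $v_n(0)\le a_0$, $v_n'(0)=0$, $v_n''(0)\le 0$, $v_n'''(0)=0$, so Lemma \ref{compconst} would force $v_n\equiv a_0$, contradicting that $v_n$ is nonconstant. Since $G$ attains its minimum over $(0,\infty)$ at $a_0$ (Lemma \ref{lemma properties of G}), we get $G(v_n(0))\ge G(a_0)$, hence $v_n''(0)^2=-2E_n-2G(v_n(0))\le 2\sup_n|E_n|-2G(a_0)<\infty$. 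This is the desired uniform bound on the initial data.

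To conclude, I would note that $v_n$ is symmetric-decreasing on $(-L_n/2,L_n/2)$ and periodic, so $a_n\le v_n\le M_0$ on all of $\R$ and thus $g(v_n)$ is uniformly bounded; a Gr\"onwall estimate for $|v_n|^2+|v_n'|^2+|v_n''|^2+|v_n'''|^2$, using $v_n^{(4)}=A v_n''-B v_n+g(v_n)$ and the uniform bound on the data at $0$, gives the uniform $C^4([-R,R])$ bounds. Arzel\`a--Ascoli then furnishes a subsequence converging in $C^3_{\rm loc}(\R)$, and the equation upgrades this to $C^4_{\rm loc}(\R)$ convergence to some $v_\infty\in C^4(\R)$ which solves \eqref{ode with g}. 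Finally $v_\infty\ge 0$ because $v_n\ge a_n\to 0$, and $v_\infty(0)=\lim v_n(0)\ge a_0>0$ because $v_n(0)>a_0$; in particular $v_\infty\not\equiv 0$. The only genuinely delicate step is the a priori bound on $\max_\R v_n$, for which the interplay of the energy identity with the sign of $\phi_n$ seems essential.
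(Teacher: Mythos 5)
Your proof is correct and follows the same overall strategy as the paper (conserved energy evaluated at the extrema of $v_n$, deduce uniform bounds on the initial data, Arzel\`a--Ascoli plus the equation), but you make two technical substitutions that are worth noting. First, to control $v_n''(L_n/2)$ the paper argues by contradiction with periodicity: defining $b=\max_{v\geq 0}(Bv-g(v))$, if $v_n''(L_n/2)>b/A$ then $v_n^{(4)}>0$ forever and $v_n''$ would increase monotonically, which a periodic function cannot do; this yields $v_n''(L_n/2)\leq b/A$. You instead import the factorization argument from the proof of Remark~\ref{remark strict}: $\phi_n=v_n''-\mu v_n$ solves $\phi_n''-\lambda\phi_n=g(v_n)>0$, so the maximum principle for periodic functions gives $\phi_n<0$ and hence the sharper bound $0\leq v_n''(L_n/2)<\mu a_n\to 0$. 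Second, where the paper cites the pointwise inequality $\tfrac12 (v'')^2+G(v)\leq \mathcal E_v$ from \cite[Corollary~6]{vdb} to bound $v_n$ and $v_n''$ on all of $\R$ directly, you instead evaluate the energy at $t=0$, use $G(v_n(0))\geq G(a_0)$ to bound $v_n''(0)$, and then close the $C^4([-R,R])$ estimate with Gr\"onwall. Both routes are rigorous; yours has the advantage of being more self-contained (it avoids the external reference and reuses machinery already developed in the paper), at the cost of the extra Gr\"onwall step, and it gives the slightly more precise information that $\mathcal E_{v_n}\to 0$ rather than merely $\sup_n|\mathcal E_{v_n}|<\infty$.
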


\begin{proof}
Fix $R > 0$ and consider the compact interval $[-R, R]$. We shall prove that 
\begin{equation}
\label{uniform bound on v and v''}
 \sup_{n \in \N} \sup_{t \in [-R, R]} \big( |v_n(t)|+|v''_n(t)|  \big) < \infty.
\end{equation}
Indeed, if \eqref{uniform bound on v and v''} holds, then by equation \eqref{ode with g}, $v_n^{(4)} = Av''_n - Bv_n + g(v_n)$ is also bounded on $[-R, R]$ and so are $v'_n(t) = \int_0^t v''_n(s)\diff s$ and $v'''_n(t) = \int_0^t v^{(4)}_n(s)\diff s$. By Arzelà-Ascoli, up to a subsequence, we thus have $v_n \to v_\infty$ in $C^3([-R, R])$ and, by using the equation again, in $C^4([-R,R])$. Since $R >0$ was arbitrary, we conclude by a diagonal argument.

To prove \eqref{uniform bound on v and v''}, we consider the energy
\[ \mathcal E_v(t) = -v'(t)v'''(t) + \frac{v''(t)^2}{2} + A \frac{v'(t)^2}{2} + G(v(t)). \]
By differentiating and using \eqref{ode with g} we see that $\mathcal E_v(t)$ is constant with respect to $t$. Moreover, by \cite[Corollary 6]{vdb}, we have
\[ \frac{v_n''(t)^2}{2} + G(v_n(t)) \leq \mathcal E_{v_n} \]
for all $n\in \N$. Since $G(v) \to \infty$ as $v \to \infty$, boundedness of both $v''_n(t)$ and $v_n(t)$ will follow if we can prove that $\mathcal E_{v_n}$ is bounded uniformly in $n$.

To do so, we claim that $v_n''(\frac{L_{a_n}}{2}) \leq b/A$, where $b:= \max_{v \in [0, \infty)} (Bv - g(v))$. (We recall from the proof of Lemma \ref{lemma properties of G} that $0<b<\infty$.) Indeed, if this bound on $v_n''(\frac{L_a}{2})$ was not true, the initial conditions and the equation 
\[ v_n^{(4)} = Av_n'' -Bv_n + g(v_n) \]
would imply that $v_n^{(4)}$ is positive, and $v_n''$ is increasing for all times. However, this is impossible because $v_n$ is periodic. 

Thus, since $v_n(\frac{L_{a_n}}{2}) = a_n \in (0, a_0)$ and therefore $G(v_n(\frac{L_{a_n}}{2})) < 0$, we have
$$
\mathcal E_{v_n} = \mathcal E_{v_n}(\tfrac{L_{a_n}}{2}) = \frac{v_n''(\frac{L_{a_n}}{2})^2}{2} + G(v_n(\tfrac{L_{a_n}}{2})) \leq \frac{b^2}{2A^2}
$$
for all $n \in \N$. This finishes the proof of \eqref{uniform bound on v and v''}.

Lastly, we prove that $v_\infty(0) \geq a_0$. We first note that the inequality $v_a(0) > a_0$ follows from Lemma \ref{compconst}, similarly as in the proof of $(i)$ in Theorem \ref{theorem classification ODE}. Letting $a \to 0$, we find $v_\infty(0) \geq a_0$.
\end{proof}

The second observation that we need is that the period length diverges as the minimum value approaches 0. 

\begin{lemma}
\label{lemma period length to infty}
As $a \to 0$, $L_a \to \infty$. 
\end{lemma}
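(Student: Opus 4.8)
\textbf{Proof plan for Lemma \ref{lemma period length to infty}.}

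The plan is to argue by contradiction: suppose there is a sequence $a_n \searrow 0$ along which $L_{a_n}$ stays bounded, say $L_{a_n} \leq L_0 < \infty$. The key observation is that for a periodic solution $v_{a_n}$ we can control everything by its energy $\mathcal E_{v_{a_n}}$, which we have already shown in Lemma \ref{lemma uniform convergence} is bounded uniformly in $n$ (this part does not use the value of $L_{a_n}$). Combining the uniform energy bound with the inequality $\frac{(v_{a_n}'')^2}{2} + G(v_{a_n}) \leq \mathcal E_{v_{a_n}}$ from \cite[Corollary 6]{vdb}, and the fact that $G(v) \to \infty$ as $v \to \infty$, gives a uniform bound $\|v_{a_n}\|_{L^\infty} + \|v_{a_n}''\|_{L^\infty} \leq C$, hence (via the equation) uniform bounds on all derivatives up to order four. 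Therefore, passing to a subsequence, $v_{a_n} \to v_\infty$ in $C^4_{\rm loc}(\R)$, where $v_\infty$ is the homoclinic solution furnished by Lemma \ref{lemma uniform convergence}, with $v_\infty(0) = \max_\R v_\infty \geq a_0 > 0$ and $\lim_{|t| \to \infty} v_\infty(t) = 0$.

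Now I would extract the contradiction from the bounded periods. Since $v_{a_n}$ is $L_{a_n}$-periodic with $L_{a_n} \leq L_0$, for every $t \in \R$ there is a point $t'$ within distance $L_0/2$ of $t$ at which $v_{a_n}(t') = a_n$ (namely a point congruent to $\pm L_{a_n}/2$ modulo the period). Using the uniform $C^1$ bound on the $v_{a_n}$, this forces $v_{a_n}(t) \leq a_n + C' L_0/2$ for all $t$, i.e. the $v_{a_n}$ are uniformly small... wait, that is not small. Let me instead argue at the maximum: $v_{a_n}(0) = \max_\R v_{a_n}$, and there is a point $s_n$ with $|s_n| \leq L_0/2$ and $v_{a_n}(s_n) = a_n \searrow 0$. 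Passing to the limit (along a further subsequence so that $s_n \to s_\infty$ with $|s_\infty| \leq L_0/2$), we get $v_\infty(s_\infty) = 0$. But $v_\infty > 0$ everywhere: indeed $v_\infty$ is a positive solution of \eqref{ode with g} by Lemma \ref{lemma uniform convergence}, or, if one prefers to avoid relying on positivity of the limit directly, one notes that $v_\infty$ is a nonnegative solution with $v_\infty(s_\infty) = 0$, hence by uniqueness of the ODE with zero initial data $v_\infty \equiv 0$ provided we also know $v_\infty'(s_\infty) = v_\infty''(s_\infty) = v_\infty'''(s_\infty) = 0$ — which need not hold. So the clean route is: $v_\infty$ is the homoclinic solution of $(iii)$, which by that part of Theorem \ref{theorem classification ODE} (established independently, as the existence in $(iii)$ only needs $L_a \to \infty$... ) — careful, this is circular. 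Let me re-plan.

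The non-circular route is to contradict $v_\infty(0) \geq a_0$ directly. From $|s_n| \leq L_0/2$ and $v_{a_n}(s_n) = a_n$, together with the uniform bound $|v_{a_n}'| \leq C$ on $[-L_0, L_0]$, the mean value theorem gives $v_{a_n}(0) \leq v_{a_n}(s_n) + C|s_n| \leq a_n + C L_0/2$. Hence $a_0 \leq v_\infty(0) = \lim_n v_{a_n}(0) \leq C L_0/2$. This is not yet a contradiction for generic $L_0$, so one must iterate: the point is that the maximum being bounded forces $G(v_{a_n}(0))$ to be bounded, but we can do better. Actually the sharp argument uses the energy identity evaluated at the minimum versus the maximum: $\mathcal E_{v_{a_n}} = \frac{v_{a_n}''(0)^2}{2} + G(v_{a_n}(0)) \geq G(v_{a_n}(0))$ and also $\mathcal E_{v_{a_n}} \leq \frac{b^2}{2A^2}$ from Lemma \ref{lemma uniform convergence}, so $G(v_{a_n}(0)) \leq \frac{b^2}{2A^2}$, giving $v_{a_n}(0) \leq M$ for a fixed $M$. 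Now the real contradiction: on $[s_n, 0]$ (or $[0,s_n]$) the function $v_{a_n}$ runs monotonically from $a_n$ up to $v_{a_n}(0) \geq a_0$, so by the intermediate value theorem there is $\tau_n$ between $s_n$ and $0$ with $v_{a_n}(\tau_n) = a_0/2$, say; since $v_{a_n}$ is monotone on a half-period and $v_{a_n}'$ is uniformly bounded, $v_{a_n}$ needs a definite amount of ``time'' to climb from $a_0/2$ to the neighborhood of its max, but it is trapped in $[-L_0/2, L_0/2]$ — this gives a lower bound on $L_{a_n}$ only if the climb is slow, which it need not be. \emph{The main obstacle} is precisely this: bounded period length is not obviously incompatible with the qualitative picture \emph{unless} one brings in a quantitative mechanism. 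The mechanism is the following: near $v = 0$ the equation linearizes to $v^{(4)} - Av'' + Bv \approx 0$ (since $g(v)/v \to \beta < B$), whose characteristic roots relevant here are $\pm\sqrt\lambda, \pm\sqrt\mu$ — in particular the solution cannot pass through a tiny value $a_n$ and return without spending time of order $\log(1/a_n) \to \infty$. Concretely, I would show: on the portion of one period where $v_{a_n} \leq \delta$ (with $\delta$ fixed small so that $G$ is strictly decreasing and the linearization controls the sign structure via the factorized variable $\phi = v'' - \mu v$), a phase-plane / energy estimate forces the length of that portion to be $\gtrsim \log(\delta/a_n) \to \infty$, contradicting $L_{a_n} \leq L_0$.

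Therefore the clean writeup is: assume $L_{a_n} \leq L_0$; use the uniform energy and derivative bounds to get $v_{a_n} \to v_\infty$ in $C^4_{\rm loc}$; observe $v_\infty$ is a nonnegative solution of \eqref{ode with g} which, by $L_{a_n}\le L_0$ and $a_n \to 0$, has a zero (at the limit of the minimum points $s_n$); invoke that $v_\infty(0) \geq a_0$ from Lemma \ref{lemma uniform convergence}, so $v_\infty \not\equiv 0$; now derive the contradiction by the quantitative ``slow passage near zero'' estimate. Writing $m_n = \min_\R v_{a_n} = a_n$ and examining the half-period where $v_{a_n}$ increases from $a_n$ to its maximum, I would use the factorization: with $\phi_n = v_{a_n}'' - \mu v_{a_n}$ satisfying $\phi_n'' - \lambda \phi_n = g(v_{a_n}) > 0$ and $\phi_n < 0$ (by the maximum principle, as in the proof of Remark \ref{remark strict}, since $v_{a_n}$ is periodic), the quantity $h_n := v_{a_n}'/v_{a_n}$ satisfies $h_n' = -h_n^2 + \mu + \phi_n/v_{a_n} \leq -h_n^2 + \mu$; integrating the reverse inequality obtained from $g(v_{a_n})$ being small where $v_{a_n}$ is small shows $h_n$ is bounded away from $0$ on the stretch where $a_n \leq v_{a_n} \leq \delta$, whence $\frac{d}{dt}\log v_{a_n}$ is bounded, so that stretch has length at least $c^{-1}\log(\delta/a_n) \to \infty$, contradicting boundedness of $L_{a_n}$. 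This ``slow passage near the equilibrium $0$'' step is the crux and the place where the assumption $\beta < B$ (equivalently $\mu > 0$, the linearization at $0$ being a genuine saddle) is used.
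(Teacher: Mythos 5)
There is a genuine gap, and it is precisely the one you identified yourself before going off in another direction. After setting up the compactness argument, you correctly observe that (along a subsequence) $v_{a_n} \to v_\infty$ in $C^4_{\rm loc}$, $v_\infty \geq 0$, $v_\infty(0) \geq a_0$, and that the minimum points $s_n = L_{a_n}/2$ converge to some $s_\infty$ with $v_\infty(s_\infty) = 0$. But then you write that to conclude $v_\infty \equiv 0$ by ODE uniqueness one would need \emph{all four} derivatives to vanish at $s_\infty$, ``which need not hold,'' and you abandon this line. That is the missed idea: Corollary~\ref{corollary symmetry}(i), which you already have available at this point (it is proved from Lemma~\ref{lemma comparison}, independently of the present lemma), says exactly that a nonnegative $C^4(\R)$ solution of \eqref{ode with g} is determined by its value and its \emph{first} derivative at a single point. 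So you only need $v_\infty(s_\infty) = 0$ and $v_\infty'(s_\infty) = 0$. And the latter does hold: $s_n$ is the minimum point of the periodic solution $v_{a_n}$, hence a critical point, so $v_{a_n}'(s_n) = 0$, and passing to the limit gives $v_\infty'(s_\infty) = 0$. Applying Corollary~\ref{corollary symmetry}(i) with $w \equiv 0$ (which is a nonnegative solution since $g(0)=0$) yields $v_\infty \equiv 0$, contradicting $v_\infty(0) \geq a_0 > 0$. That is the paper's entire proof; the rigidity corollary is the whole point.

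The alternative ``slow passage near the saddle'' mechanism you sketch in its place is a reasonable intuition and could in principle be made to work (it is the standard way one sees divergence of periods near a hyperbolic equilibrium), but you have only outlined it: the claim that $h_n = v_{a_n}'/v_{a_n}$ stays bounded away from zero on the stretch where $v_{a_n}$ is small is not established, and the passage from the Riccati-type inequality $h_n' \leq -h_n^2 + \mu$ to a two-sided bound on $h_n$ needs a real argument (an upper bound on $h_n'$ alone does not control $h_n$). As written, this is a plan, not a proof, and it is considerably more work than simply invoking the two-data rigidity that has already been established for precisely situations like this one.
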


\begin{proof}
Suppose that there is a sequence $a_n \searrow 0$ and $L_\infty < \infty$ such that $L_n := L_{a_n} \to L_\infty$. Then by Lemma \ref{lemma uniform convergence}, up to a subsequence, there is a nonnegative $v_\infty \in C^4(\R)$ which solves \eqref{ode with g} such that $v_n := v_{a_n} \to v_\infty$ in $C^4([- L_\infty,  L_\infty])$. Moreover, we have $v_\infty(\frac{L_\infty}{2}) = \lim_{n \to \infty} v_n(\frac{L_n}{2}) = \lim_{n \to \infty} a_n = 0$ and $v'_\infty(\frac{L_\infty}{2}) = \lim_{n \to \infty} v'_n(\frac{L_n}{2}) = 0$. From Corollary \ref{corollary symmetry} with $w\equiv 0$ we deduce that $v_\infty \equiv 0$, in contradiction to Lemma \ref{lemma uniform convergence}.
\end{proof}

We can now prove the desired existence result. 

\begin{lemma}
There is a positive solution $v_0 \in C^4(\R)$ of \eqref{ode with g} with $\lim_{|t| \to\infty} v_0(t) = 0$. 
\end{lemma}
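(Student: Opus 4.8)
The plan is to obtain $v_0$ as a locally $C^4$ limit of the periodic solutions $v_a$ as $a\searrow 0$. First I would fix a sequence $a_n\searrow 0$, set $v_n=v_{a_n}$ and $L_n=L_{a_n}$, and apply Lemma~\ref{lemma uniform convergence}: after passing to a subsequence, $v_n\to v_\infty$ in $C^4(K)$ for every compact $K\subset\mathbb R$, where $v_\infty\in C^4(\mathbb R)$ is a nonnegative solution of \eqref{ode with g} with $v_\infty(0)\geq a_0>0$; in particular $v_\infty\nequiv 0$. Since each $v_n$ satisfies $v_n'(0)=0$ and is strictly symmetric-decreasing on $(-\tfrac{L_n}{2},\tfrac{L_n}{2})$, and $L_n\to\infty$ by Lemma~\ref{lemma period length to infty}, passing to the limit shows that $v_\infty$ is symmetric-decreasing on all of $\mathbb R$ (i.e.\ even and non-increasing on $[0,\infty)$).

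Next I would verify that $v_\infty>0$ everywhere. If $v_\infty(t_1)=0$ for some $t_1$, then $t_1$ is a minimum of $v_\infty\geq 0$, so $v_\infty'(t_1)=0$; translating $t_1$ to the origin and applying part~(i) of Corollary~\ref{corollary symmetry} to $v_\infty$ and $w\equiv 0$ (both nonnegative solutions of \eqref{ode with g}) forces $v_\infty\equiv 0$, contradicting $v_\infty(0)\geq a_0$. Thus $v_\infty$ is a \emph{positive} entire solution, so by the trichotomy recalled in the proof sketch of Theorem~\ref{theorem classification ODE} (a consequence of Lemmas~\ref{lemma wei blow up} and \ref{lemma comparison}) it is either constant equal to $a_0$, homoclinic to zero, or periodic with a unique local maximum and minimum per period. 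A function which is even and non-increasing on $[0,\infty)$ cannot be periodic and non-constant, and $a_0$ is the only positive constant solution of \eqref{ode with g} by Theorem~\ref{theorem classification ODE}(i). Hence $v_\infty$ is either $\equiv a_0$ or homoclinic to zero.

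It remains to exclude $v_\infty\equiv a_0$, which I expect to be the only delicate step. For this I would use the conserved energy
\[ \mathcal E_v(t) = -v'(t)v'''(t) + \tfrac12 v''(t)^2 + \tfrac{A}{2}\,v'(t)^2 + G(v(t)), \]
which, by \cite[Corollary 6]{vdb}, dominates $\tfrac12 v''(t)^2+G(v(t))$ for all $t$. Since $v_n$ attains the value $a_n$ at its minimum, this gives $\mathcal E_{v_n}\geq G(a_n)$, and hence $\liminf_{n\to\infty}\mathcal E_{v_n}\geq G(0)=0$ by continuity of $G$. On the other hand, if $v_\infty\equiv a_0$, then along the subsequence $v_n(0)\to a_0$ and $v_n''(0)\to 0$, and since $v_n'(0)=0$ we get $\mathcal E_{v_n}=\tfrac12 v_n''(0)^2+G(v_n(0))\to G(a_0)$; but $G(a_0)<G(0)=0$ because $G$ is strictly decreasing on $(0,a_0)$ by Lemma~\ref{lemma properties of G}, contradicting $\liminf\mathcal E_{v_n}\geq 0$. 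Therefore $v_\infty$ is homoclinic to zero, and since it is even and non-increasing on $[0,\infty)$ this means $\lim_{|t|\to\infty}v_\infty(t)=0$. Setting $v_0:=v_\infty$ completes the proof; every ingredient except the final energy comparison is routine bookkeeping with results already established.
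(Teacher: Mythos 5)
Your proof is correct and follows the paper's approach: obtain $v_0$ as a $C^4_{\rm loc}$ limit of the periodic solutions via Lemma~\ref{lemma uniform convergence}, use the monotonicity coming from Lemma~\ref{lemma period length to infty} to see that the limit at infinity exists, and exclude the alternative $a_0$ by comparing the conserved energy against the lower bound $\mathcal E_{v_n}\geq G(a_n)\to 0$. The only difference is how that alternative is ruled out. The paper excludes $\lim_{t\to\infty}v_0(t)=a_0$ directly by computing $\mathcal E_{v_0}=\lim_{t\to\infty}\mathcal E_{v_0}(t)=G(a_0)<0$, appealing to \cite[Lemma 4]{vdb} for vanishing of the derivatives of $v_0$ at infinity, and then noting $\mathcal E_{v_0}=\lim_n\mathcal E_{v_n}\geq 0$. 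You instead first invoke the trichotomy and an explicit positivity check via Corollary~\ref{corollary symmetry}(i) to reduce to the constant case $v_\infty\equiv a_0$, and then evaluate the energy at the single point $t=0$, which avoids the appeal to \cite[Lemma 4]{vdb} at the cost of invoking the trichotomy and Lemma~\ref{compconst} (via Theorem~\ref{theorem classification ODE}(i)). Both are sound; your version also has the small virtue of making explicit the positivity of $v_\infty$, a hypothesis the paper needs (to apply Lemma~\ref{lemma wei blow up}) but leaves implicit.
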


\begin{proof}
By Lemma \ref{lemma uniform convergence}, there is a nonnegative solution $v_0 \in C^4(\R)$ of \eqref{ode with g} such that $v_n \to v_0$ in $C^4(K)$ for every compact $K \subset \R$. Since $v_n$ is symmetric-decreasing on $[-\frac{L_n}{2}, \frac{L_n}{2}]$, and since $L_n \to \infty$ by Lemma \ref{lemma period length to infty}, $v_0$ is symmetric-decreasing on all of $\R$ and therefore $\lim_{t \to \infty} v_0(t)$ exists. By Lemma \ref{lemma wei blow up}, this limit equals either $0$ or $a_0$ and it remains to exclude the second case. 

Thus, suppose that $\lim_{t \to \infty} v_0(t) = a_0$. We can derive a contradiction using the energy $\mathcal E_v$ introduced in the proof of Lemma \ref{lemma uniform convergence}. Using the fact that all derivatives of $v_0$ vanish at $\infty$ by \cite[Lemma 4]{vdb} (note that $v_0$ is monotone as required for this lemma), we have
\begin{equation}
\label{eq:homoexcont1}
\mathcal E_{v_0} = \lim_{t \to \infty} \mathcal E_{v_0} (t) = G(a_0) < 0 \,. 
\end{equation}
On the other hand, we have for each $n$,
$$
\mathcal E_{v_n} = \mathcal E_{v_n}(\tfrac{L_{a_n}}{2}) = \frac{v_n''(\frac{L_{a_n}}{2})^2}{2} + G(v_n(\tfrac{L_{a_n}}{2})) = \frac{v_n''(\frac{L_{a_n}}{2})^2}{2} + G(a_n) \geq G(a_n) \,.
$$
Thus, since $v_n\to v_0$ in $C^4(K)$ for any compact $K$ implies $\mathcal E_{v_n} \to \mathcal E_{v_0}$ as $n\to\infty$ and since $G(a_n)\to G(0)=0$ as $n\to\infty$, we have
$$
\mathcal E_{v_0} = \lim_{n\to\infty} \mathcal E_{v_n} \geq \lim_{n\to\infty} G(a_n)=0 \,,
$$
contradicting \eqref{eq:homoexcont1}. This contradiction shows that $\lim_{t \to \infty} v_0(t) = 0$. By the symmetry of $v_0$, we also obtain $\lim_{t \to - \infty} v_0(t) = 0$ and the proof is complete. 
\end{proof}


\subsection{Decay behavior of the homoclinic solution}
\label{subsection decay homoclinic}

We prove the following decay behavior of the homoclinic solution of \eqref{ode with g}. We recall that we assume $0\leq \beta=\lim_{s \to 0} g'(s) < B$ and set $\mu =\frac12(A - \sqrt{A^2 - 4(B - \beta)})$.

\begin{lemma}
\label{lemma decay of homoclinic solution}
Let $v \in C^4(\R)$ be a positive solution of \eqref{ode with g} with $\lim_{|t| \to \infty} v(t) = 0$. Then for every $\epsilon>0$ there is a $C_\epsilon<\infty$ such that $v(t) \leq C_\epsilon e^{-(\sqrt\mu-\epsilon)|t|}$ for all $t\in\R$. 
\end{lemma}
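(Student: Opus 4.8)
\emph{Overview of the strategy.} The idea is to linearize \eqref{ode with g} at zero and exploit the factorization structure of the linear part. Write $h:=g(v)-\beta v$, so that \eqref{ode with g} reads $v^{(4)}-Av''+(B-\beta)v=h$. Since $0<B-\beta<B<A^2/4$, the polynomial $X^2-AX+(B-\beta)$ has two positive roots $\mu<\nu$ with $\mu+\nu=A$, $\mu\nu=B-\beta$ and $\mu=\frac12(A-\sqrt{A^2-4(B-\beta)})$, and
\[
L_0:=\partial_t^4-A\partial_t^2+(B-\beta)=(\partial_t^2-\mu)(\partial_t^2-\nu).
\]
The homogeneous equation $L_0u=0$ has as general solution a linear combination of $e^{\pm\sqrt\mu t}$ and $e^{\pm\sqrt\nu t}$, so its only bounded solution on $\R$ is $u\equiv0$. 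Since $\mathcal G:=\mathcal F^{-1}\bigl((\xi^4+A\xi^2+(B-\beta))^{-1}\bigr)=\frac{1}{2(\nu-\mu)}\bigl(\frac{e^{-\sqrt\mu|\cdot|}}{\sqrt\mu}-\frac{e^{-\sqrt\nu|\cdot|}}{\sqrt\nu}\bigr)$ lies in $L^1(\R)$ and $h$ is bounded, $\mathcal G * h$ is a bounded $C^4$ solution of $L_0u=h$; hence $v-\mathcal G * h$ is a bounded solution of $L_0u=0$ and therefore $v=\mathcal G * h$. From the explicit formula one reads off that $|\mathcal G(t)|\le C_0 e^{-\sqrt\mu|t|}$ for all $t$, which pins down the decay rate $\sqrt\mu$.

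\emph{Smallness of the nonlinear remainder.} Because $g\in C^1$ with $g(0)=0$ and $g'(0)=\beta$, one has $g(s)/s\to\beta$ as $s\to0^+$, and since $v(t)\to0$ as $|t|\to\infty$ this yields: for every $\eta>0$ there is $T_\eta<\infty$ with $|h(t)|\le\eta\,v(t)$ for all $|t|\ge T_\eta$. Moreover $|h|\le M<\infty$ everywhere, since $g$ is continuous and $v$ bounded.

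\emph{The bootstrap.} Fix $\epsilon>0$; we may assume $\gamma:=\sqrt\mu-\epsilon\in(0,\sqrt\mu)$, the complementary range being trivial as $v$ is bounded. Choose $\eta>0$ so small that $2C_0\eta/(\sqrt\mu-\gamma)\le\frac12$ and put $T:=T_\eta$. Combining $v=\mathcal G * h$, the bound on $\mathcal G$ and the two bounds on $h$,
\[
v(t)\le C_0M\int_{|s|\le T}e^{-\sqrt\mu|t-s|}\diff s+C_0\eta\int_{|s|>T}e^{-\sqrt\mu|t-s|}v(s)\diff s .
\]
To circumvent the fact that no decay rate for $v$ is available a priori, fix $N\in\N$ and set $m_N:=\sup_{t\in\R}v(t)e^{\gamma\min(|t|,N)}$, which is finite since $v$ is bounded. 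Multiply the displayed inequality by $e^{\gamma\min(|t|,N)}$. The first integral is at most $2Te^{-\sqrt\mu(|t|-T)_+}$, so (using $\gamma<\sqrt\mu$) the first term is bounded by a constant $C_1=C_1(\epsilon,T,M)$ independent of $t$ and $N$. For the second term, the fact that $x\mapsto\min(x,N)$ is $1$-Lipschitz gives $\gamma\min(|t|,N)-\gamma\min(|s|,N)\le\gamma|t-s|$, whence the second term is at most $C_0\eta\,m_N\int_\R e^{-(\sqrt\mu-\gamma)|u|}\diff u=\tfrac{2C_0\eta}{\sqrt\mu-\gamma}\,m_N\le\tfrac12 m_N$. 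Taking the supremum over $t$ gives $m_N\le C_1+\tfrac12 m_N$, i.e. $m_N\le 2C_1$ uniformly in $N$, and letting $N\to\infty$ yields $v(t)\le 2C_1 e^{-\gamma|t|}=2C_1 e^{-(\sqrt\mu-\epsilon)|t|}$ for all $t$, as claimed.

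\emph{Expected main obstacle.} Once the factorization is in place everything is elementary; the only delicate point is the self-referential estimate, where $v$ reappears on the right because $h$ is controlled only by $\eta v$ near infinity. The truncation $\min(|t|,N)$ together with the limit $N\to\infty$ turns this into a legitimate fixed-point argument without needing any preliminary exponential bound (such a rough bound could instead be deduced from the hyperbolicity of the origin for the first-order system equivalent to \eqref{ode with g}, but this detour is avoidable).
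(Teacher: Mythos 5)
Your proof is correct, and it takes a genuinely different route from the paper. The paper establishes this lemma via a comparison argument on a half-line: writing $g(v)=\beta v+h(v)$, it perturbs $B-\beta$ downward by $\epsilon$ (valid on $(T_\epsilon,\infty)$ since $h(v(t))/v(t)\le\epsilon$ there), factorizes the resulting linear differential inequality, and applies a fourth-order maximum principle (Lemma \ref{lemma max principle fourth order}) to compare $v$ with an explicit decaying solution $f_\epsilon$ of the perturbed homogeneous equation matching $v$ and $v''$ at $T_\epsilon$. The Green's function enters the paper only in the subsequent Lemma \ref{lemma decay of homoclinic solution sharp}, which already assumes the present lemma. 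You instead go directly to the global representation $v=\mathcal G * h$ on all of $\R$ (which is legitimate, since $\mathcal G\in L^1$ with $L_0\mathcal G=\delta_0$, $\mathcal G * h$ is a bounded distributional solution of $L_0u=h$, and $L_0$ has no nonzero bounded homogeneous solutions), and then run a weighted sup-norm bootstrap with the truncation $e^{\gamma\min(|t|,N)}$ to circumvent the lack of a priori decay. This is a clean and self-contained argument; it essentially folds the paper's Lemma \ref{lemma max principle fourth order} and the comparison with $f_\epsilon$ into a single contraction estimate, at the cost of verifying the convolution representation for the fourth-order operator (continuity of $\mathcal G,\mathcal G',\mathcal G''$ and the unit jump in $\mathcal G'''$). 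A side benefit is that your approach unifies this lemma with Lemma \ref{lemma decay of homoclinic solution sharp}, since both now run on the same integral formula.
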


The proof of this bound relies on a comparison argument using the following fourth-order variant of the maximum principle. 

\begin{lemma}
\label{lemma max principle fourth order}
Suppose that $w \in C^4(\R)$ satisfies the inequality 
\[ (\frac{\diff^2}{\diff t^2} - \lambda) (\frac{\diff^2}{\diff t^2} - \mu) w(t) \geq 0 \quad \text{ on } (T, \infty) \]
for some $\lambda, \mu > 0$ and $T \in \R$ and that $w(T)= w''(T)  = \lim_{t \to \infty} w(t) =\lim_{t \to \infty} w''(t)= 0$. Then $w \geq 0$ on $(T, \infty)$. 
\end{lemma}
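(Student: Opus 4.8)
The plan is to decompose the fourth-order operator and apply the ordinary second-order maximum principle twice. First I would set $\phi := (\frac{\diff^2}{\diff t^2} - \mu) w$, so that the hypothesis reads $(\frac{\diff^2}{\diff t^2} - \lambda)\phi \geq 0$ on $(T,\infty)$, and the boundary conditions give $\phi(T) = w''(T) - \mu w(T) = 0$ and $\lim_{t\to\infty}\phi(t) = \lim_{t\to\infty}(w''(t) - \mu w(t)) = 0$. So $\phi$ is a subsolution of $-\phi'' + \lambda \phi \leq 0$ on the half-line $(T,\infty)$, vanishing at the finite endpoint and at infinity.

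The key step is then to conclude $\phi \leq 0$ on $(T,\infty)$. This is a standard maximum principle argument for the operator $-\frac{\diff^2}{\diff t^2} + \lambda$ with $\lambda > 0$: since $e^{\pm\sqrt\lambda t}$ are the homogeneous solutions, a subsolution that vanishes at both ends of the interval must be nonpositive inside. Concretely, if $\phi$ were positive somewhere, it would attain a positive maximum at some interior point $t_\ast \in (T,\infty)$ (using $\phi(T)=0$ and $\phi \to 0$ at infinity, together with continuity; if the sup is approached only at infinity it is $0$), where $\phi''(t_\ast) \leq 0$, contradicting $\phi''(t_\ast) \geq \lambda \phi(t_\ast) > 0$. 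Hence $\phi \leq 0$ on $(T,\infty)$.

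Now I would repeat the same reasoning one level down: we have $w'' - \mu w = \phi \leq 0$ on $(T,\infty)$, i.e. $-w'' + \mu w \geq 0$, again with $\mu > 0$, and the boundary data $w(T) = 0$, $\lim_{t\to\infty} w(t) = 0$. By the identical maximum principle argument (this time $w$ is a supersolution of the operator $-\frac{\diff^2}{\diff t^2}+\mu$ vanishing at both ends, so it is nonnegative), we conclude $w \geq 0$ on $(T,\infty)$, which is the claim.

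I do not expect any serious obstacle here; the only mild point of care is the behavior at $t\to\infty$, where one should argue that a subsolution of a coercive second-order operator which tends to $0$ at infinity cannot have a positive interior supremum — this is handled by the elementary argument above (considering an interior maximum, or, if one prefers, by comparison with $\varepsilon(e^{-\sqrt\lambda(t-T)} + e^{-\sqrt\lambda/(t-T)})$-type barriers and letting $\varepsilon \to 0$), and no use of the finer structure of $v$ or of \eqref{ode with g} is needed. One should also note that the decomposition is legitimate because $w \in C^4$, so $\phi \in C^2$ and all the pointwise manipulations are valid.
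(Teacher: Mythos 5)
Your proof is correct and follows essentially the same route as the paper: factor the fourth-order operator, apply the second-order maximum principle on $(T,\infty)$ twice. The only cosmetic difference is that the paper peels off $(\tfrac{\diff^2}{\diff t^2}-\lambda)$ first (setting $u = w'' - \lambda w$, so $u'' - \mu u \geq 0$) while you peel off $(\tfrac{\diff^2}{\diff t^2}-\mu)$ first; since the two constant-coefficient factors commute this is immaterial, and your spelled-out interior-maximum argument is exactly the content of the paper's appeal to ``the maximum principle.''
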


\begin{proof}
The inequality for $w$ factorizes into the system 
\[ 
\begin{cases}
w'' - \lambda w = u, \\
u'' - \mu u \geq 0. 
\end{cases}
\]
By assumption, we have $u(T) = 0$ and $\lim_{t \to \infty} u(t) = 0$. By the maximum principle applied to the inequality $u''-\mu u\geq 0$, we thus deduce that 
\[ w'' - \lambda w = u \leq  0. \]
By assumption, we have $w(T) = 0$ and $\lim_{t \to \infty} w(t) = 0$. Applying the maximum principle a second time, we deduce $w \geq 0$, as desired. 
\end{proof}

\begin{proof}
[Proof of Lemma \ref{lemma decay of homoclinic solution}]
We first observe that the limiting behavior of $g$ and $g'$ at 0 given by \eqref{conditions on g}, together with the mean value theorem easily imply that $\lim_{v \to 0} \frac{g(v)}{v}$ exists and is equal to $\beta = \lim_{v \to 0} g'(v)$.  
Therefore, we may write $g(v) = \beta v + h(v)$ with $\lim_{v \to 0} \frac{h(v)}{v} = 0$ and think of equation \eqref{ode with g} as 
\begin{equation}
\label{ode with h} 
v^{(4)} - Av'' + (B-\beta)v = h(v). 
\end{equation}
Fix some $\epsilon > 0$ such that $B - \beta - \epsilon > 0$ and $T_\epsilon$ such that $\frac{h(v(t))}{v(t)} \leq \epsilon$ for all $t > T_\epsilon$. Since $v > 0$, equation \eqref{ode with h} implies
\[ v^{(4)} - A v '' + (B- \beta - \epsilon) v \leq 0 \quad \text{ on } (T_\epsilon, \infty). \]
Since by our choice of $\epsilon$, the condition $A^2 > 4 (B- \beta - \epsilon) > 0$ is still satisfied, the expression on the left hand side factorizes as 
\[ (\frac{\diff^2}{\diff t^2} - \lambda_\epsilon) (\frac{\diff^2}{\diff t^2} - \mu_\epsilon) v(t) \leq 0 \quad \text{ on } (T_\epsilon, \infty), \]
with some $\lambda_\epsilon > \mu_\epsilon > 0$. 

We compare $v$ with a solution $f_\epsilon$ of
\[  (\frac{\diff^2}{\diff t^2} - \lambda_\epsilon) (\frac{\diff^2}{\diff t^2} - \mu_\epsilon) f_\epsilon = 0 \]
which tends to zero as $t \to \infty$. The general solution of this problem is given by 
\[ f_\epsilon(t) = a_\epsilon e^{-\sqrt{\lambda_\epsilon} t} + b_\epsilon e^{-\sqrt{\mu_\epsilon}t }, \qquad a_\epsilon, b_\epsilon \in \R\, .  \]
We may fix $a_\epsilon$ and $b_\epsilon$ such that 
\[ f_\epsilon(T_\epsilon) = v(T_\epsilon) \quad \text{and} \quad f''_\epsilon(T_\epsilon) = v''(T_\epsilon) \,. \]
(This is always possible as a consequence of $\lambda_\epsilon \neq \mu_\epsilon$). Since moreover, $\lim_{t \to \infty} v''(t) =0$ by \cite[Lemma 4]{vdb}, the function $w := f_\epsilon - v$ fulfills the assumptions of Lemma \ref{lemma max principle fourth order}. We therefore deduce that $w \geq 0$, i.e., $v \leq f_\epsilon = O(e^{- \sqrt{\mu_\epsilon} t})$ as $t \to \infty$. Analogously, one obtains $v = O(e^{ \sqrt{\mu_\epsilon} t})$ as $t \to - \infty$. Since $\mu_\epsilon\to \mu$ as $\epsilon\to 0$, we obtain the claimed bound. 
\end{proof}

\begin{lemma}
\label{lemma decay of homoclinic solution sharp}
Let $v \in C^4(\R)$ be a positive solution of \eqref{ode with g} with $\lim_{|t| \to \infty} v(t) = 0$ and assume that $g$ satisfies the additional assumption \eqref{eq:gnearzero}. Then $\lim_{|t|\to\infty} e^{\sqrt\mu |t|}v(t)$ exists and is finite. When $g(t)\geq \beta t$ for all $t>0$, then the limit is positive.
\end{lemma}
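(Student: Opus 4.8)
The approach is to regard equation \eqref{ode with g} near $\pm\infty$ as the constant-coefficient equation $v^{(4)}-Av''+(B-\beta)v=0$ perturbed by an inhomogeneity that decays \emph{strictly} faster than $e^{-\sqrt\mu|t|}$, and to read off the leading term by inverting the two second-order operators in the factorization
\[
\frac{\diff^4}{\diff t^4}-A\frac{\diff^2}{\diff t^2}+(B-\beta)=\Bigl(\frac{\diff^2}{\diff t^2}-\lambda\Bigr)\Bigl(\frac{\diff^2}{\diff t^2}-\mu\Bigr),
\]
where $\lambda>\mu>0$ are the roots of $\xi^2-A\xi+(B-\beta)$ (real and distinct by \eqref{eq:structureass} and $0\le\beta<B$).

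First, as in the proof of Lemma \ref{lemma decay of homoclinic solution}, write $g(v)=\beta v+h(v)$, so that \eqref{ode with g} becomes \eqref{ode with h}, i.e.\ $v^{(4)}-Av''+(B-\beta)v=h(v)$. By \eqref{eq:gnearzero} we have $|h(s)|\le Cs^r$ for $0<s\le1$; since $v(t)\to0$, this gives $|h(v(t))|\le Cv(t)^r$ for $|t|$ large, and combining this with the rough bound $v(t)\le C_\epsilon e^{-(\sqrt\mu-\epsilon)|t|}$ from Lemma \ref{lemma decay of homoclinic solution} and a sufficiently small $\epsilon>0$, I would fix any $\rho$ with $\sqrt\mu<\rho<\min\{\sqrt\lambda,\,r\sqrt\mu\}$ (possible since $\lambda>\mu$ and $r>1$) and conclude $|h(v(t))|\le C'e^{-\rho|t|}$ on all of $\R$.

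Next, set $\phi:=v''-\mu v$, so that \eqref{ode with h} becomes $\phi''-\lambda\phi=h(v)$. Since $v,v''\to0$ at $\pm\infty$ by \cite[Lemma 4]{vdb}, the function $\phi$ is bounded and tends to $0$; as $y''-\lambda y=0$ has no nonzero bounded solution, $\phi$ must be the unique bounded solution, namely $\phi(t)=-\tfrac1{2\sqrt\lambda}\int_\R e^{-\sqrt\lambda|t-s|}h(v(s))\,\diff s$, and a routine convolution estimate (using $\rho<\sqrt\lambda$) yields $|\phi(t)|\le C''e^{-\rho|t|}$. The same reasoning shows that $v$ is the unique bounded solution of $v''-\mu v=\phi$, so $v(t)=-\tfrac1{2\sqrt\mu}\int_\R e^{-\sqrt\mu|t-s|}\phi(s)\,\diff s$. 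Splitting this integral at $s=t$ and using $|\phi(s)|\le C''e^{-\rho|s|}$ with $\rho>\sqrt\mu$, one checks that $\ell:=-\tfrac1{2\sqrt\mu}\int_\R e^{\sqrt\mu s}\phi(s)\,\diff s$ is absolutely convergent and that $v(t)=\ell e^{-\sqrt\mu t}+O(e^{-\rho t})$ as $t\to+\infty$; hence $\lim_{t\to+\infty}e^{\sqrt\mu t}v(t)=\ell$ exists and is finite. The same computation at $-\infty$ (applied to $t\mapsto v(-t)$) gives the corresponding one-sided limit, and normalizing $v$ so that its unique critical point, which exists by Corollary \ref{corollary symmetry}, lies at the origin makes $v$ even, so that the two one-sided limits coincide. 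For the positivity statement, $v>0$ forces $\ell\ge0$; and if $g(t)\ge\beta t$ for all $t>0$ then $h(v)\ge0$, hence $\phi''-\lambda\phi\ge0$ on $\R$, and since $\phi\to0$ at $\pm\infty$ the maximum principle forces $\phi\le0$ on $\R$. If $\phi\equiv0$ then $v''=\mu v$, whose only bounded solution is $v\equiv0$, contradicting $v>0$; thus $\phi\not\equiv0$ and therefore $\ell=-\tfrac1{2\sqrt\mu}\int_\R e^{\sqrt\mu s}\phi(s)\,\diff s>0$.

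The main obstacle is the bookkeeping of decay rates: one must pick $\rho$ so that $h(v)$ genuinely decays faster than $e^{-\sqrt\mu|t|}$ — this is exactly where $r>1$ enters, via $\rho<r\sqrt\mu$ — while still beating $\sqrt\lambda$, and then carry out the splitting of the final convolution carefully enough to isolate the coefficient $\ell$ and control the remainder by $O(e^{-\rho t})$. These steps are elementary once set up, and the remaining ingredients — the maximum principle and the uniqueness of bounded solutions of $y''-cy=f$ with $c>0$ — are precisely the tools already used above, e.g.\ in Lemma \ref{lemma max principle fourth order}.
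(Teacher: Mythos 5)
Your proof is correct and takes essentially the same route as the paper's: decompose $g=\beta v+h$, use the $\epsilon$-decay from Lemma \ref{lemma decay of homoclinic solution} together with \eqref{eq:gnearzero} to upgrade $h(v)$ to a decay rate $\rho$ strictly larger than $\sqrt\mu$, and then read off the leading coefficient from a Green's-function representation of $v$. The only methodological difference is that you invert the two second-order factors one at a time (passing through the intermediate $\phi=v''-\mu v$), whereas the paper inverts the full fourth-order operator with its explicit Green's function $G(t,s)=\tfrac1{\lambda-\mu}\bigl(\tfrac{e^{-\sqrt\mu|t-s|}}{2\sqrt\mu}-\tfrac{e^{-\sqrt\lambda|t-s|}}{2\sqrt\lambda}\bigr)$; these are the same computation organized differently. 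Your version is in fact a bit more careful on two minor points that the paper leaves implicit: you justify that the two one-sided limits at $\pm\infty$ coincide by invoking the evenness of $v$ via Corollary \ref{corollary symmetry}, and your positivity argument ($h\ge 0 \Rightarrow \phi\le 0$ by the maximum principle, plus $\phi\not\equiv0$ since $v''=\mu v$ has no positive bounded solution) is a clean alternative to the paper's appeal to $h(v)/v<h'(v)$ to rule out $h\equiv 0$.
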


\begin{proof}
Equation \eqref{ode with h} reads, in factorized form, 
\begin{equation}
\label{ode factorized} (\frac{\diff^2}{\diff t^2} - \lambda)(\frac{\diff^2}{\diff t^2} - \mu) v = h(v), \end{equation}
with $\lambda> \mu > 0$ given by
\begin{equation}
\label{lambda and mu}
\lambda = \frac{A + \sqrt{A^2 - 4(B - \beta)}}{2} \quad \text{  and  } \quad \mu =\frac{A - \sqrt{A^2 - 4(B - \beta)}}{2}.
\end{equation}
The Green's function associated to equation \eqref{ode factorized} is 
\[ G(t,s) = \frac{1}{\lambda-\mu} \left( \frac{e^{-\sqrt\mu |t-s|}}{2\sqrt\mu} - \frac{e^{-\sqrt\lambda|t-s|}}{2\sqrt\lambda} \right).
\]
According to \eqref{eq:gnearzero} and Lemma \ref{lemma decay of homoclinic solution} we have
\begin{equation}
\label{eq:hvbound}
|h(v(t))| \leq C_\epsilon e^{-r(\sqrt{\mu}-\epsilon)|t|}
\end{equation}
As a consequence, we can solve equation \eqref{ode factorized} for $v$ using $G$ and obtain
\[ v(t) = \int_\R G(t,s) h(v(s)) \,. \]
Using this formula and \eqref{eq:hvbound} with $\epsilon>0$ chosen so small that $r(\sqrt\mu-\epsilon)>\sqrt\mu$ it is easy to deduce that
$$
\lim_{t\to\infty} e^{\sqrt\mu t} v(t) = \frac{1}{\lambda-\mu}\, \frac{1}{2\sqrt\mu} \int_\R e^{\sqrt\mu s} h(v(s)) \,,
$$
where the right side is finite. Moreover, if $h\geq 0$, then the limit is positive. (Note that \eqref{conditions on g} implies that $h(v)/v<h'(v)$ for all $v>0$ and therefore $h$ is non-zero.) 
\end{proof}

\bibliography{paper-mazya}

\begin{thebibliography}{10}

\bibitem{BuChTo}
B.~Buffoni, A.~R. Champneys, and J.~F. Toland.
\newblock Bifurcation and coalescence of a plethora of homoclinic orbits for a
  {H}amiltonian system.
\newblock {\em J. Dynam. Differential Equations}, 8(2):221--279, 1996.

\bibitem{CaGiSp}
Luis~A. Caffarelli, Basilis Gidas, and Joel Spruck.
\newblock Asymptotic symmetry and local behavior of semilinear elliptic
  equations with critical {S}obolev growth.
\newblock {\em Comm. Pure Appl. Math.}, 42(3):271--297, 1989.

\bibitem{FrKo}
Rupert~L. Frank and Tobias K\"{o}nig.
\newblock Classification of positive singular solutions to a nonlinear
  biharmonic equation with critical exponent.
\newblock {\em Anal. PDE}, 12(4):1101--1113, 2019.

\bibitem{GazzolaGrunau}
Filippo Gazzola and Hans-Christoph Grunau.
\newblock Radial entire solutions for supercritical biharmonic equations.
\newblock {\em Math. Ann.}, 334(4):905--936, 2006.

\bibitem{Lieb}
Elliott~H. Lieb.
\newblock Sharp constants in the {H}ardy-{L}ittlewood-{S}obolev and related
  inequalities.
\newblock {\em Ann. of Math. (2)}, 118(2):349--374, 1983.

\bibitem{LiLo}
Elliott~H. Lieb and Michael Loss.
\newblock {\em Analysis}, volume~14 of {\em Graduate Studies in Mathematics}.
\newblock American Mathematical Society, Providence, RI, second edition, 2001.

\bibitem{Mazya}
Vladimir Maz'ya.
\newblock {\em Sobolev spaces with applications to elliptic partial
  differential equations}, volume 342 of {\em Grundlehren der Mathematischen
  Wissenschaften [Fundamental Principles of Mathematical Sciences]}.
\newblock Springer, Heidelberg, augmented edition, 2011.

\bibitem{Swanson}
Charles~A. Swanson.
\newblock The best {S}obolev constant.
\newblock {\em Appl. Anal.}, 47(4):227--239, 1992.

\bibitem{Te}
Susanna Terracini.
\newblock Symmetry properties of positive solutions to some elliptic equations
  with nonlinear boundary conditions.
\newblock {\em Differential Integral Equations}, 8(8):1911--1922, 1995.

\bibitem{vdb}
Jan~Bouwe van~den Berg.
\newblock The phase-plane picture for a class of fourth-order conservative
  differential equations.
\newblock {\em J. Differential Equations}, 161(1):110--153, 2000.

\bibitem{Xu}
X.~Xu.
\newblock Uniqueness theorem for the entire positive solutions of biharmonic
  equations in {${\bf R}^n$}.
\newblock {\em Proc. Roy. Soc. Edinburgh Sect. A}, 130(3):651--670, 2000.

\bibitem{Yang}
Hui {Yang}.
\newblock {Asymptotic behavior of positive solutions to a nonlinear biharmonic
  equation near isolated singularities}.
\newblock {\em arXiv e-prints}, page arXiv:1812.06555, Dec 2018.

\end{thebibliography}
	\bibliographystyle{plain}

\end{document}